\newtheorem{theorem}{Theorem}
\newtheorem{lemma}[theorem]{Lemma}
\newtheorem{corollary}[theorem]{Corollary}
\theoremstyle{definition}
\newtheorem{remark}[theorem]{Remark}
\def \O {\mathcal{O}}
\def \v {{\bf v}}
\def \kbar {\overline{k}}
\def \Qbar {\overline{\mathbb{Q}}}
\DeclareMathOperator{\Supp}{Supp}
\DeclareMathOperator{\dv}{div}
\DeclareMathOperator{\Pic}{Pic}
\begin{document}
\bibliographystyle{amsplain}
\title[Linear forms in logarithms and integral points on varieties]{Linear forms in logarithms and integral points on higher-dimensional varieties}
\author{Aaron Levin}
\address{Department of Mathematics\\Michigan State University\\East Lansing, MI 48824}
\curraddr{}
\email{adlevin@math.msu.edu}

\begin{abstract}
We apply inequalities from the theory of linear forms in logarithms to deduce effective results on $S$-integral points on certain higher-dimensional varieties when the cardinality of $S$ is sufficiently small.  These results may be viewed as a higher-dimensional version of an effective result of Bilu on integral points on curves.  In particular, we prove a completely explicit result for integral points on certain affine subsets of the projective plane.  As an application, we generalize an effective result of Vojta on the three-variable unit equation by giving an effective solution of the polynomial unit equation $f(u,v)=w$, where $u,v,$ and $w$ are $S$-units, $|S|\leq 3$, and $f$ is a polynomial satisfying certain conditions (which are generically satisfied).  Finally, we compare our results to a higher-dimensional version of Runge's method, which has some characteristics in common with the results here.
\end{abstract}

\maketitle

\section{Introduction}

The problem of proving effective results in Diophantine questions is one of the most pervasive and basic problems in number theory.  Already in the case of curves, the fundamental finiteness theorems for integral points and rational points (Siegel's theorem and Faltings' theorem, resp.) are not known in an effective way, that is, in general there is no known algorithm to provably compute the finite sets in the conclusion of either theorem.  In certain special cases, however, effective techniques have been developed for computing integral or rational points on curves.  The most general and widely used effective methods for integral points on curves come from the theory of linear forms in logarithms, developed originally by Baker \cite{Bak}.  In higher dimensions, effective techniques have not received much attention.  A natural first step towards proving higher-dimensional effective results consists of taking the known effective techniques for curves and applying them, to the extent possible, to the higher-dimensional situation.  In \cite{LevRun}, some progress towards this goal was achieved by formulating a higher-dimensional version of an effective method of Runge for computing integral points on curves.  In this article we will consider the theory of linear forms in logarithms and applications to integral points on higher-dimensional varieties.

One of the few directions in which progress has been made on the study of integral points on higher-dimensional varieties involves varieties which, roughly speaking, have many components at infinity (e.g., \cite{Aut, Aut2, CLZ, CZ2, CZ, LevRun, Lev}).  The results given here also fit into this framework.  We prove the following effective result for integral points on higher-dimensional varieties.  

\begin{theorem}
\label{mth2}
Let $X$ be a nonsingular projective variety defined over a number field $k$.  Let $D_1, \ldots, D_n$ be effective ample divisors on $X$ defined over $k$.  Let $D=\sum_{i=1}^nD_i$.  Let $m\leq n$ be a positive integer such that for all subsets $I\subset \{1,\ldots, n\}$, $|I|=m$, the set $\cap_{i\in I} (\Supp D_i)(\kbar)$ consists of finitely many points.  Suppose that for each point $P\in (\Supp D)(\kbar)$, there exists a nonconstant rational function $\phi\in k(X)$ satisfying $P\not\in \Supp \phi$ and $\Supp \phi\subset \Supp D$.  Let $S$ be a set of places of $k$ containing the archimedean places with 
\begin{equation*}
(m-1)|S|<n.    
\end{equation*}
Let $R$ be a set of $S$-integral points on $X\setminus D$.  Suppose that $X, D_1,\ldots, D_n, D, R, S, k$ satisfy (*) in Section \ref{sres}.  Then $R$ is contained in an effectively computable proper closed subset $Z$ of $X$.
\end{theorem}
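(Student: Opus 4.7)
The plan is to combine a pigeonhole argument based on the inequality $(m-1)|S|<n$ with the theory of linear forms in logarithms, in the spirit of Bilu's effective result for integral points on curves.

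First I would reduce, for each $P\in R$, the problem to producing an effective upper bound on $h_{D_i}(P)$ for some $i$. Since $P$ is $S$-integral on $X\setminus D_i$, the decomposition
\[
h_{D_i}(P)=\sum_{v\in S}\lambda_{D_i,v}(P)+O(1)
\]
holds with an effective $O(1)$, and because each $D_i$ is ample, a bound on $h_{D_i}(P)$ gives an effective bound on $h_H(P)$ for any fixed ample $H$, placing $P$ in an effectively computable proper closed subset of $X$.

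Next, fix a height threshold $T$ and for each $v\in S$ set $I(P,v)=\{i:\lambda_{D_i,v}(P)>T\}$. The hypothesis on $m$-fold intersections of the $\Supp D_i$, together with a standard $v$-adic compactness argument, implies that whenever $|I(P,v)|\geq m$ the point $P$ is $v$-adically close to a point of the fixed finite set $\Sigma=\bigcup_{|I|=m}\,\bigcap_{i\in I}(\Supp D_i)(\kbar)$. Split into two cases. In the \emph{dispersed} case $|I(P,v)|\leq m-1$ for all $v\in S$; then $\sum_{v\in S}|I(P,v)|\leq (m-1)|S|<n$, so by pigeonhole some index $i^*$ satisfies $\lambda_{D_{i^*},v}(P)\leq T$ for every $v\in S$, which bounds $h_{D_{i^*}}(P)$ effectively and finishes this case via the first paragraph.

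The remaining \emph{concentrated} case is when $P$ is $v$-adically close to some $Q\in\Sigma$ for some $v\in S$. The hypothesis on $\Supp D$ supplies a nonconstant $\phi\in k(X)$ with $Q\not\in\Supp\phi\subset\Supp D$. Since $\Supp\phi\subset\Supp D$ and $P$ is $S$-integral on $X\setminus D$, the value $\phi(P)$ is an $S$-unit (in $k$ or a controlled extension), and since $Q\not\in\Supp\phi$ the $v$-adic closeness of $P$ to $Q$ forces $\phi(P)$ to be $v$-adically close to the fixed nonzero value $\phi(Q)\in k^\times$. This is precisely the setup to which linear forms in logarithms, in the form of Baker's theorem and its $p$-adic analogues due to Yu, apply to yield an effective upper bound on $h(\phi(P))$. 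The auxiliary hypothesis (*) should guarantee that such bounds, taken for a sufficient collection of $\phi$'s ranging over $Q\in\Sigma$ and over the points of $\Supp D$, combine to an effective bound on $h(P)$ itself.

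I expect the main obstacle to lie in this final conversion. Effectively bounding individual $S$-units $\phi(P)$ via linear forms in logarithms is standard once the geometric setup is fixed, but turning those scalar height bounds back into a bound on $h(P)$ requires enough algebraically independent $\phi$'s near each $Q\in\Sigma$, or equivalently the assembly of an $S$-unit equation in enough variables to be solvable by Baker's method; securing this uniformly in $Q$ and in the place $v$ that witnesses concentration is where the bookkeeping and the precise role of (*) will reside.
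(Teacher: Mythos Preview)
Your overall architecture---pigeonhole on places followed by linear forms in logarithms---matches the paper's, but the version you sketch has a genuine gap in the concentrated case. Baker's inequality (Theorem~\ref{Baker2} in the paper) says only that for an $S$-unit $x\neq\alpha$ one has $h_{\alpha,v}(x)\leq \epsilon\, h(x)+C_\epsilon$; it bounds the $v$-adic closeness \emph{from above} in terms of the height, not the height in terms of the closeness. With a fixed threshold $T$, all you know in the concentrated case is that $h_{Q,v}(P)$ exceeds some constant, and feeding this into Baker yields a \emph{lower} bound on $h(\phi(P))$, the wrong direction. The paper's pigeonhole is set up differently precisely to avoid this: for each $i$ there is some $v\in S$ with $h_{D_i,v}(P)\geq h_{D_i}(P)/|S|+O(1)$, and since $(m-1)|S|<n$ some single $v$ works for $m$ indices simultaneously, giving $\min_j h_{D_{i_j},v}(P)\geq c\,h_D(P)+O(1)$ with $c=1/(N|S|)$. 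There is no dispersed case; every $P\in R$ is concentrated in this height-proportional sense. Now Baker, transported through $\phi$ via Lemmas~\ref{L3} and~\ref{L4}, gives $h_{Q,v}(P)\leq \epsilon\,h_D(P)+O(1)$ for $P$ outside the fiber $\{\phi=\phi(Q)\}$, and for $\epsilon$ small enough this contradicts the lower bound and forces $h_D(P)$ to be bounded. (Your dichotomy can be rescued by taking $T=c\,h_D(P)$ with $c$ small; the dispersed case then also collapses to a height bound.)

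Your final paragraph also asks for more than the theorem promises. The conclusion is that $R$ lies in a \emph{proper closed subset} $Z$, not a finite set. For each $Q\in\Sigma$ a single nonconstant $\phi$ with $Q\notin\Supp\phi\subset\Supp D$ already suffices: either $h_D(P)$ is bounded by the argument above, or $\phi(P)=\phi(Q)$ and the hypersurface $\{\phi=\phi(Q)\}$ is absorbed into $Z$. You do not need several algebraically independent $\phi$'s or a multivariable unit equation; the higher-dimensional locus $\bigcup_{Q\in\Sigma}\bigcap_{\phi\in\Phi_Q}\{\phi=\phi(Q)\}$ is part of the answer, not an obstacle to be eliminated.
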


To make the meaning of ``effective" precise, we have assumed in the theorem that one can compute certain natural quantities described in Section~\ref{sres}.  An explicit description of the higher-dimensional part of $Z$ is given in Theorem \ref{mtheorem}.  If $X=C$ is a curve, then Theorem \ref{mth2} (with $m=1$) is easily seen to be equivalent to the following theorem of Bilu \cite{Bilu}.

\begin{theorem}[Bilu]
\label{BT}
Let $C\subset \mathbb{A}^n$ be an affine curve defined over a number field $k$.  Suppose that there exist two everywhere nonvanishing regular functions on $C$ with multiplicatively independent images in $k(C)^*/k^*$.  For any finite set of places $S$ of $k$ containing the archimedean places, the set $C(\O_{k,S})$ is finite and effectively computable.
\end{theorem}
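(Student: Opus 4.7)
The plan is to recover Bilu's theorem as the curve, $m=1$ case of Theorem~\ref{mth2}. I would let $X$ be the unique nonsingular projective model of $C$ over $k$ and decompose the reduced boundary divisor $X\setminus C$ on $X$ into its irreducible components $D_1,\ldots,D_r$ over $k$. Each $D_i$ is a nonzero effective divisor on a smooth projective curve, hence has positive degree and is ample. Setting $D=D_1+\cdots+D_r$, we have $X\setminus D=C$, so $R:=C(\O_{k,S})$ is a set of $S$-integral points on $X\setminus D$ to which Theorem~\ref{mth2} (with $m=1$ and $r$ playing the role of $n$) may be applied.

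With $m=1$ the intersection hypothesis on $m$-fold intersections of $\Supp D_i$ is trivial on a curve, and the numerical inequality $(m-1)|S|<r$ reduces to $0<r$, which holds because $C$ is affine. The nontrivial step is verifying that for each $P\in(\Supp D)(\kbar)$ there is a nonconstant $\phi\in k(X)$ with $P\notin\Supp\phi$ and $\Supp\phi\subset\Supp D$. For this I would use the two everywhere nonvanishing regular functions $u,v\in\O(C)^*$ whose images in $k(C)^*/k^*$ are multiplicatively independent. Since $u,v$ have no zeros or poles on $C$, their divisors on $X$ are supported in $X\setminus C=\Supp D$, and multiplicative independence modulo $k^*$ is equivalent to $\mathbb{Z}$-linear independence of $\dv u,\dv v$ in $\mathrm{Div}(X)$, because $\dv(u^av^b)=0$ exactly when $u^av^b\in k^*$. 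For any fixed $P\in\Supp D$, the restriction of $\ord_P$ to the rank-$2$ lattice generated by $\dv u,\dv v$ has a kernel of rank at least $1$, and any nonconstant $\phi=u^av^b$ in this kernel satisfies the required conditions; the integers $(a,b)$ are found by solving a single linear equation in the known values $\ord_P u,\ord_P v$.

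Having verified the hypotheses, Theorem~\ref{mth2} produces an effectively computable proper closed subset $Z\subsetneq X$ containing $R$; since $X$ is a curve, $Z$ is a finite set of points, and this gives the effective finiteness of $C(\O_{k,S})$. The main obstacle, requiring the as-yet-unseen condition (*) of Section~\ref{sres}, will be checking that (*) really does specialize in this curve setting to standard effective data attached to $(X,D,S)$; granting this, the remaining work is routine bookkeeping around the Galois structure of the boundary and the linear-algebra step above.
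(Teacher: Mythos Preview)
Your proposal is correct and follows exactly the route the paper indicates: the paper does not give a standalone proof of Theorem~\ref{BT} but simply remarks that for a curve $X=C$ and $m=1$, Theorem~\ref{mth2} is ``easily seen to be equivalent'' to Bilu's theorem, and your argument supplies the missing details of that equivalence in one direction. Your verification of the rational-function hypothesis via the rank-$2$ lattice generated by $\dv u,\dv v$ is the natural one, and the paper confirms (in Section~\ref{sres}) that condition~(*) is satisfied for curves, so the remaining obstacle you flag is not an issue.
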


Thus, Theorem \ref{mth2} may be viewed as a higher-dimensional generalization of Bilu's theorem.  We note that, as mentioned in \cite{Bilu}, when combined with finite covers and the Chevalley-Weil theorem, Theorem \ref{BT} appears to be responsible for all known ``universally effective" results on integral points on curves (results valid for all number fields $k$ and finite sets of places $S$).

As an easy consequence of Theorem \ref{mth2}, we obtain the following result for integral points on surfaces.

\begin{corollary}
\label{cos}
Let $X$ be a nonsingular projective surface defined over a number field $k$.  Let $D_1, \ldots, D_n$ be ample effective divisors on $X$, defined over $k$, that generate a subgroup of $\Pic(X)$ of rank $r$ and pairwise do not have any common components.  Let $D=\sum_{i=1}^nD_i$.  Suppose that the intersection of the supports of any $n-r$ of the divisors $D_i$ is empty.  Let $S$ be a set of places of $k$ containing the archimedean places with 
\begin{equation*}
|S|<n.    
\end{equation*}
Let $R$ be a set of $S$-integral points on $X\setminus D$.  Suppose that $X, D_1,\ldots, D_n, D, R, S, k$ satisfy (*) in Section \ref{sres}.  Then $R$ is contained in an effectively computable proper closed subset $Z$ of $X$.
\end{corollary}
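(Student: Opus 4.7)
The plan is to deduce Corollary \ref{cos} directly from Theorem \ref{mth2} by taking $m = 2$. The inequality $(m-1)|S| = |S| < n$ is precisely the given hypothesis on $S$. Since $\dim X = 2$ and the $D_i$ pairwise share no common components, any intersection $\Supp D_i \cap \Supp D_j$ ($i \neq j$) is a proper intersection of two $1$-cycles on a surface, hence consists of finitely many points. So the finiteness-of-intersection hypothesis in Theorem \ref{mth2} holds for $m=2$.

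The only non-routine step is to verify the rational function condition: for each $P \in (\Supp D)(\kbar)$, produce a nonconstant $\phi \in k(X)$ with $P \notin \Supp \phi$ and $\Supp \phi \subset \Supp D$. Put $J = J(P) = \{i : P \in \Supp D_i\}$. The hypothesis that the intersection of the supports of any $n-r$ of the $D_i$ is empty forces $|J| \leq n - r - 1$, so $|J^c| \geq r+1$.

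Now consider the homomorphism $\mathbb{Z}^{J^c} \to \Pic(X)$, $e_i \mapsto [D_i]$. Its image lies inside the rank $r$ subgroup of $\Pic(X)$ generated by all $D_i$'s, so has rank at most $r$; hence the kernel has rank at least $|J^c| - r \geq 1$. Pick a nonzero lattice element in the kernel, giving a relation $\sum_{i \in J^c} a_i D_i \sim 0$ with the $a_i$ not all zero, and let $\phi \in k(X)$ satisfy $\dv(\phi) = \sum_{i \in J^c} a_i D_i$. Because the $D_i$ pairwise have no common irreducible components, the effective parts of distinct $a_i D_i$ cannot cancel, and hence the divisor $\sum_{i \in J^c} a_i D_i$ is nonzero (so $\phi$ is nonconstant) unless every $a_i = 0$. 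By construction $\Supp \phi \subset \bigcup_{i \in J^c} \Supp D_i \subset \Supp D$ and $P \notin \Supp \phi$, as required.

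All hypotheses of Theorem \ref{mth2} are now in place, and condition (*) is inherited from the corollary's statement, so Theorem \ref{mth2} produces the effectively computable proper closed subset $Z \subset X$ containing $R$. The only real obstacle is constructing the rational function $\phi$; this step is exactly where both the rank hypothesis (providing relations in $\Pic(X)$) and the empty-intersection-of-$n-r$-divisors hypothesis (providing enough $D_i$ avoiding $P$) are used.
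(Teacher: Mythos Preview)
Your proof is correct and follows essentially the same route as the paper: the corollary is the case $m=2$ of Theorem~\ref{mth2} (equivalently Theorem~\ref{mtheorem}/Corollary~\ref{mcorollary}), with the rational function $\phi$ produced exactly via the rank argument you give. The paper states the key fact (that $r+1$ divisors with no common components generating a rank $r$ subgroup of $\Pic(X)$ support a nonconstant principal divisor) without proof and then invokes Theorem~\ref{mtheorem}; you have simply spelled out that step in detail, including the count $|J^c|\ge r+1$ coming from the empty-intersection hypothesis.
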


The requirement, in the above results, that the number of components at infinity be large relative to the cardinality of $S$ appears prominently in Runge's method \cite{LevRun} as well.  We will compare our results with a higher-dimensional version of Runge's method in Section \ref{sRun}.

As an application of our result on surfaces, we prove an effective result on two-variable polynomials that take on $S$-unit values at $S$-unit arguments when $|S|\leq 3$.

\begin{corollary}
\label{gunit}
Let $f\in k[x,y]$ be a polynomial of degree $d>0$ such that $f(0,0)\neq 0$ and $x^d$ and $y^d$ have nonzero coefficients in $f$.  Let $S$ be a finite set of places of $k$ containing the archimedean places with $|S|\leq 3$.  Then the set of solutions to
\begin{equation*}
f(u,v)=w, \quad u,v,w\in \O_{k,S}^*,
\end{equation*}
consists of a finite effectively computable set and a finite number of infinite families of solutions where one of $u,v,$ or $w$ is constant.
\end{corollary}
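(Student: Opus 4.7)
The plan is to apply Corollary~\ref{cos} after reformulating the problem as an $S$-integral points question on $X = \mathbb{P}^2$. Let $F(x,y,z)$ denote the homogenization of $f$, and set $D_1 = \{x = 0\}$, $D_2 = \{y = 0\}$, $D_3 = \{z = 0\}$, $D_4 = \{F = 0\}$, with $D = \sum_{i=1}^{4} D_i$. Unwinding the $S$-integrality conditions on the affine chart $z = 1$ shows that $S$-integral points of $X \setminus D$ correspond bijectively to triples $(u,v,w) \in (\O_{k,S}^*)^3$ satisfying $f(u,v) = w$. I would then verify the hypotheses of Corollary~\ref{cos} with $n = 4$: each $D_i$ is ample, and since $\Pic(\mathbb{P}^2)$ has rank one, $r = 1$ and $n - r = 3$. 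The hypotheses $f(0,0) \neq 0$, $a_{d0} \neq 0$, $a_{0d} \neq 0$ (where $a_{d0}, a_{0d}$ denote the coefficients of $x^d, y^d$ in $f$) imply that $F$ is not divisible by any of $x, y, z$ (so no two $D_i$ share a common component) and that each of the three points $[0{:}0{:}1]$, $[1{:}0{:}0]$, $[0{:}1{:}0]$ avoids $D_4$ (so every triple intersection $D_i \cap D_j \cap D_k$ is empty). Finally, $|S| \leq 3 < 4 = n$.

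Applying Corollary~\ref{cos} gives an effectively computable proper closed subset $Z \subset \mathbb{P}^2$ containing the solution set. The isolated points of $Z$ contribute directly to the finite effectively computable set of solutions, and it remains to analyze integral points on each of the finitely many irreducible curve components $C$ of $Z$. For curves $C$ of one of the three ``special'' forms $\{u = u_0\}$, $\{v = v_0\}$, or an irreducible component of $\{f(u,v) = w_0\}$ for constants $u_0, v_0, w_0$, the restriction of $f$ yields a two-variable $S$-unit equation (e.g.\ $f(u_0, v) = w$) to which Bilu's theorem (Theorem~\ref{BT}) applies: only finitely many choices of the constant produce infinitely many solutions. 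For instance, infinitude in the $u_0$ case forces $f(u_0, v)$ to be a monomial in $v$, which in particular requires $f(u_0, 0) = 0$; by the hypothesis $f(0,0) \neq 0$, the polynomial $f(u, 0)$ in $u$ is nonzero, so only finitely many $u_0$ qualify. This yields the stated finite number of infinite families with one of $u, v, w$ constant.

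For any other irreducible curve $C \subset Z$, I would apply Bilu's theorem directly to the affine curve $C \setminus D$, on which $u, v, w$ are regular and nowhere vanishing: if two of $u, v, w$ are multiplicatively independent modulo $k^*$ in $k(C)^*$, Bilu yields an effective finite set of integral points on $C$. The principal obstacle is verifying this independence, i.e., showing that any irreducible $C \subset Z$ not of the three special forms must admit such an independent pair. A relation $u^a v^b = c$ with $a, b \neq 0$ parametrizes $C$ as a monomial curve, reducing $w = f(u,v)$ to a one-variable rational function of the parameter; demanding $w$ to be additionally a monomial imposes highly nongeneric cancellations among the coefficients of $f$ which, combined with the hypotheses $f(0,0), a_{d0}, a_{0d} \neq 0$, should be satisfiable only when one of $u, v, w$ is constant on $C$. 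Handling analogously the multiplicative dependencies involving $w$ and collecting everything outside the infinite families into the finite effective set constitutes the technical core of the argument.
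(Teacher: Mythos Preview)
Your setup matches the paper's exactly: homogenize $f$, take the four divisors $D_1,\ldots,D_4$ on $\mathbb{P}^2$, verify the hypotheses of the surface corollary using the three nonvanishing coefficients $a_{00},a_{d0},a_{0d}$, and apply it with $|S|<4$. (Minor point: the solution set embeds into, rather than bijects with, $(\mathbb{P}^2\setminus D)(\O_{k,S})$; the inclusion is all that is needed.)

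The gap is precisely the one you flag yourself. You attempt to show that on any irreducible $C\subset Z$ either two of $u,v,w$ are multiplicatively independent in $k(C)^*/k^*$ (so Bilu applies) or one of them is constant on $C$, but you leave this dichotomy as a ``should'' and call it ``the technical core''. The paper does not argue about an arbitrary curve in $Z$; instead it uses the \emph{explicit} description of $Z$ furnished by Corollary~\ref{surfcor}. Computing $\bigcap_{\phi\in\Phi_P}\{\phi(Q)=\phi(P)\}$ for each $P\in T=\bigcup_{i\neq j}(D_i\cap D_j)(\kbar)$ shows that, beyond finitely many points, $Z$ consists of (i) the three curves $Z_1\colon F=c_xx^d$, $Z_2\colon F=c_yy^d$, $Z_3\colon F=c_0z^d$ coming from the coordinate points, and (ii) lines joining a point of $D_i\cap D_j$ to a point of $D_k\cap D_l$ with $\{i,j,k,l\}=\{1,2,3,4\}$. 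With this in hand, if $x/z$ and $y/z$ are multiplicatively dependent but nonconstant on $C\subset Z$, then $C$ is a monomial curve $x^my^{n-m}=az^n$ (or a permutation) of some degree $n$; for $n\geq 2$ it must be a component of some $Z_i$, and then, e.g., $F-c_xx^d=(x^my^{n-m}-az^n)g$ with $0<m<n$ forces the $y^d$--coefficient of $F$ to vanish, contrary to hypothesis. Hence $n=1$, and only the three pencils of lines $x=az$, $y=az$, $y=ax$ remain, which are then sorted into the effective finite part versus the finitely many infinite families.

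Your direct route can in fact be closed: if $u=\alpha t^p$, $v=\beta t^q$, $w=\gamma t^r$ on $C$ with $p,q,r$ all nonzero, then $\gamma t^r=f(\alpha t^p,\beta t^q)$ as Laurent polynomials in $t$, and an extremal-exponent check using the monomials $1,x^d,y^d$ of $f$ shows the right side carries at least two distinct powers of $t$. But you have not supplied this, and invoking the concrete shape of $Z$ is what the paper uses in its place (and is what also yields the explicit classification of the infinite families in Corollary~\ref{gunit2}).
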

The infinite families of solutions are explicitly described in Corollary \ref{gunit2} in Section~\ref{su}.

Taking $f(x,y)$ to be an appropriate affine linear polynomial, we find that Corollary \ref{gunit} generalizes an effective result of Vojta \cite{V} on the three-variable $S$-unit equation with $|S|\leq 3$.
\begin{theorem}[Vojta]
\label{TV}
Let $k$ be a number field, $S$ a finite set of places of $k$ containing the archimedean places, and $a_1,a_2,a_3\in k^*$.  If $|S|\leq 3$, then the set of solutions to the equation
\begin{align*}
a_1u_1+a_2u_2+a_3u_3&=1, \quad u_1,u_2,u_3\in \O_{k,S}^*, 
\intertext{with}
\sum_{i\neq j}a_iu_i&\neq 0, \quad j=1,2,3,
\end{align*}
is finite and effectively computable.
\end{theorem}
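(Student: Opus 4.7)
The plan is to derive Theorem \ref{TV} as a direct application of Corollary \ref{gunit}. Set
$$f(x,y) \;=\; \frac{1}{a_3} - \frac{a_1}{a_3}x - \frac{a_2}{a_3}y \;\in\; k[x,y].$$
This is a polynomial of degree $d=1$ with $f(0,0)=1/a_3\neq 0$, and with nonzero coefficients on both $x^d=x$ and $y^d=y$, so it satisfies the hypotheses of Corollary \ref{gunit}. For a triple $(u_1,u_2,u_3)\in (\O_{k,S}^*)^3$, the identity $f(u_1,u_2)=u_3$ is equivalent, after clearing $a_3$, to Vojta's equation $a_1u_1+a_2u_2+a_3u_3=1$. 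Note that the hypothesis $|S|\leq 3$ of Corollary \ref{gunit} matches Vojta's hypothesis exactly, and, crucially, no enlargement of $S$ is required because the constants $a_i/a_3$ and $1/a_3$ are absorbed into $f$ rather than into $u,v,w$.

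Applying Corollary \ref{gunit} to this $f$, the set of solutions decomposes as a finite effectively computable set together with finitely many infinite families, in each of which one of $u_1,u_2,u_3$ is held constant. It remains to show that the non-degeneracy conditions $\sum_{i\neq j}a_iu_i\neq 0$ cut each such family down to a finite, effectively computable subset. By symmetry, suppose the family fixes $u_1=c\in k^*$; the remaining constraint is then
$$a_2u_2+a_3u_3 \;=\; 1-a_1c, \qquad u_2,u_3\in \O_{k,S}^*.$$
If $1-a_1c=0$, then $a_2u_2+a_3u_3=0$, contradicting the $j=1$ non-degeneracy hypothesis; hence $1-a_1c\neq 0$, and we are reduced to a nondegenerate two-variable $S$-unit equation, for which finiteness and effectivity are classical consequences of Baker-type lower bounds on linear forms in logarithms. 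The cases in which $u_2$ or $u_3$ is the constant coordinate are handled identically using the $j=2$ or $j=3$ condition. Combining the finite effective set from Corollary \ref{gunit} with the effective solutions from each of the finitely many families gives the desired finite, effectively computable set of solutions.

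The main obstacle at this stage is essentially pre-packaged: the real work goes into Corollary \ref{gunit} itself (and hence into Theorem \ref{mth2} and its reliance on linear forms in logarithms). The one genuine task remaining in this derivation is verifying, from the explicit description of the infinite families given in Corollary \ref{gunit2}, that each such family is indeed parameterized by fixing one of $u_1,u_2,u_3$ to a specific value in $k^*$ with the other two ranging over $S$-units satisfying the resulting affine linear relation. Once that structural description is in hand, the reduction above is routine bookkeeping plus the classical effective two-variable unit equation.
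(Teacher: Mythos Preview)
Your derivation is correct and matches the paper's intended approach: the paper does not give a standalone proof of Theorem~\ref{TV} but simply remarks that ``taking $f(x,y)$ to be an appropriate affine linear polynomial'' exhibits it as a special case of Corollary~\ref{gunit}, which is precisely what you do.

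One small simplification is available. Your case split ``either $1-a_jc=0$ (excluded by non-degeneracy) or $1-a_jc\neq 0$ (two-variable $S$-unit equation)'' is correct, but for the specific $f(x,y)=\tfrac{1}{a_3}-\tfrac{a_1}{a_3}x-\tfrac{a_2}{a_3}y$ the second branch never occurs. From the explicit description in Corollary~\ref{gunit2} with $d=1$, $c_x=-a_1/a_3$, $c_y=-a_2/a_3$, $c_0=1/a_3$, the three families force $u_1=1/a_1$, $u_2=1/a_2$, $u_3=1/a_3$ respectively, and in each case the corresponding sum $\sum_{i\neq j}a_iu_i$ vanishes identically. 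Thus the non-degeneracy hypotheses kill the infinite families outright, and the appeal to the classical two-variable unit equation is unnecessary here. Your argument is still valid as written; it is just slightly more general than needed.
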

We note that versions of Theorem \ref{TV} with $k=\mathbb{Q}$ were also proved by Mo and Tijdeman \cite{MT} and Skinner \cite{Skin}.  Ineffectively, versions of Corollary \ref{gunit} and Theorem~\ref{TV} can be proven without any assumption on the (finite) cardinality of $S$.  For Theorem \ref{TV}, this is a special case of a well-known result on unit equations, proved independently by Evertse \cite{Ev} and van der Poorten and Schlickewei \cite{vdP2}.  In the case of Corollary~\ref{gunit}, this is an easy consequence of a result of Vojta \cite[Cor.\ 2.4.3]{Voj3} and the proof of Corollary~\ref{gunit}.  The ineffectivity here comes ultimately from usage of the Schmidt Subspace Theorem.

More generally, Vojta proved the following result for systems of unit equations.
\begin{theorem}[Vojta]
\label{TV2}
Let $m$ and $n$ be positive integers with $n>m$.  Let $(a_{ij})$ be an $m\times n$ matrix with elements in a number field $k$ such that no $m+1$ distinct columns of the matrix have rank less than $m$, and such that no column is identically zero.  Assume further that $S$ is a finite set of places of $k$, containing the archimedean places, satisfying
\begin{equation*}
(n-m-2)|S|<n.
\end{equation*}
Then the set of solutions to the system of unit equations
\begin{equation*}
a_{i1}u_1+\cdots +a_{in}u_n=0, \quad 1\leq i\leq m, \quad u_1,\ldots, u_n\in \O_{k,S}^*,
\end{equation*}
can be effectively determined.
\end{theorem}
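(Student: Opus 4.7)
The plan is to realize $S$-unit solutions of the system as projective $S$-integral points on an appropriate linear subvariety, apply Theorem \ref{mth2} to obtain a closed subset containing them, and then descend by induction.

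Let $V \subset \mathbb{A}^n$ be the linear subspace defined by $\sum_j a_{ij} u_j = 0$, $1 \leq i \leq m$. The hypothesis that any $m+1$ columns have rank $m$ forces the full $m \times n$ matrix to have rank $m$, so $\dim V = n - m$. Set $X = \mathbb{P}(V) \cong \mathbb{P}^{n-m-1}$, and for each $i \in \{1, \ldots, n\}$ let $D_i \subset X$ be the hyperplane section defined by the restriction of the $i$-th coordinate $u_i$ to $V$; each $D_i$ is ample, effective, and defined over $k$. A solution $(u_1,\ldots,u_n) \in (\Ok^*)^n$ of the system gives the projective $S$-integral point $[u_1:\cdots:u_n]$ on $X \setminus D$, and conversely every such projective point lifts canonically to an $\Ok^*$-orbit of $S$-unit solutions, so effectively determining the projective solution set suffices.

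I apply Theorem \ref{mth2} with parameter $m_0 := n - m - 1$ playing the role of ``$m$''. For $I \subset \{1,\ldots,n\}$ of size $m_0$, the intersection $\bigcap_{i \in I} \Supp D_i$ parametrizes solutions supported on the complementary set $J$ of size $m+1$; the rank hypothesis on the $m \times (m+1)$ submatrix indexed by $J$ forces the affine solution space to be $1$-dimensional, so the intersection is a single projective point. The numerical inequality $(m_0 - 1)|S| < n$ becomes exactly $(n - m - 2)|S| < n$. For the rational-function condition at $P \in \Supp D$, I take $\phi = u_i/u_j$ with $u_i(P), u_j(P) \neq 0$ and $u_i, u_j$ non-proportional on $V$; the rank hypothesis precludes two coordinate forms being proportional whenever $n \geq m + 3$, so such $(i,j)$ always exist. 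Theorem \ref{mth2} then yields an effectively computable proper closed subset $Z \subsetneq X$ containing every projective $S$-integral point on $X \setminus D$.

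I complete the argument by induction on $n - m$. The base cases $n - m \leq 2$ are handled directly: when $n - m = 1$, $X$ is a point and the solution set is a single $\Ok^*$-orbit (if nonempty); when $n - m = 2$, $Z$ is already a finite subset of $\mathbb{P}^1$. For the inductive step ($n - m \geq 3$), each irreducible component of $Z$ is contained in a linear subvariety of $X$ cut out by additional linear relations among the $u_i$, which determine a smaller system of unit equations with strictly smaller $n - m$, to which the inductive hypothesis applies. The main obstacle will be this descent: I must verify that each effectively computable component of $Z$ really corresponds to a subsystem satisfying the rank and cardinality hypotheses of Theorem \ref{TV2} for smaller parameters $(n', m')$, and in particular that the inequality $(n' - m' - 2)|S| < n'$ is inherited. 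A secondary technicality is the rational-function condition in the extremal case $n = m + 2$, where the rank hypothesis need not preclude coordinate proportionality; here the one-dimensionality of $X$ allows a direct treatment.
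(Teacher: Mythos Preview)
The paper does not prove Theorem~\ref{TV2}; it is quoted as a result of Vojta and cited from \cite{V}. So there is no ``paper's own proof'' to compare against. Your approach---realizing the solution set as $S$-integral points on $X=\mathbb{P}(V)\cong\mathbb{P}^{n-m-1}$ with the $n$ coordinate hyperplanes $D_i$, and applying Theorem~\ref{mth2} with parameter $m_0=n-m-1$---is the natural way to recover Vojta's theorem from the framework of this paper, and the numerics match. The verification that any $m_0$ of the $D_i$ meet in a single point, and (for $n\geq m+3$) that no two $u_i|_V$ are proportional, are correct.

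There are, however, two genuine gaps. First, your base case $n-m=2$ is not handled: you assert that ``$Z$ is already a finite subset of $\mathbb{P}^1$,'' but Theorem~\ref{mth2} may fail to apply because the rational-function condition can fail. Concretely, with $m=2$, $n=4$, and matrix $\begin{pmatrix}1&0&1&0\\0&1&0&1\end{pmatrix}$ (which satisfies all hypotheses of Theorem~\ref{TV2}), one has $u_1|_V=-u_3|_V$ and $u_2|_V=-u_4|_V$, so $\Supp D$ consists of only two points of $\mathbb{P}^1$; then $X\setminus D\cong\mathbb{G}_m$ and the projective solution set is all of $\O_{k,S}^*$, an infinite (but effectively described) family. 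You must separate the subcase where $\Supp D$ has $\leq 2$ points and describe the resulting parametrized families directly.

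Second, the inductive step is left open, as you acknowledge. The components of the exceptional set $Z$ are indeed linear subspaces (cut out by relations $u_i=c\,u_j$ with $c\in\kbar$), but adjoining these to the original system produces an $(m+1)\times n$ matrix that need not satisfy the rank hypothesis, and whose coefficients need not lie in $k$. You must either show the hypothesis is inherited (it is not, in general), or---more in the spirit of Vojta's original argument---treat each such linear subspace directly, parametrizing it and separating the cases where some $u_i$ vanishes identically on it (giving lower-$n$ subsystems) from those where it does not.
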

More precisely, viewing a solution in Theorem \ref{TV2} as a point in $\mathbb{P}^{n-1}$, the set of solutions to a system of equations as in Theorem \ref{TV2} lies in finitely many proper linear subspaces of $\mathbb{P}^{n-1}$, and these solutions may be explicitly described and parametrized.  In forthcoming work, Bennett \cite{Ben} has improved the inequality on $|S|$ in Theorem \ref{TV2} to $(n-m-1)|S|<2n$.  In particular, Bennett's methods allow one to extend Theorem \ref{TV} to four-variable unit equations, that is, to effectively solve the unit equation 
\begin{equation*}
a_1u_1+a_2u_2+a_3u_3+a_4u_4=1, \quad u_1,u_2,u_3,u_4\in \O_{k,S}^*, 
\end{equation*}
where $a_1,a_2,a_3,a_4\in k^*$ and $|S|\leq 3$.  It would be interesting to determine the extent to which Bennett's methods may be applied to gain a similar improvement to the results presented here.

In Section \ref{PP}, we prove a completely explicit version of Corollary \ref{cos} when $X=\mathbb{P}^2$ is the projective plane.  
\begin{theorem}
\label{thP2}
Let $k$ be a number field of degree $\delta$ and discriminant $\Delta$.  Let $C_1, \ldots, C_n$ be distinct curves over $k$ in $\mathbb{P}^2$ such that the intersection of any $n-1$ of the curves is empty.  Let $S$ be a set of places of $k$ containing the archimedean places with $s=|S|<n$.  Then the set of integral points $\left(\mathbb{P}^2\setminus \cup_{i=1}^nC_i\right)(\O_{k,S})$ is contained in an effectively computable proper Zariski closed subset $Z$ of $\mathbb{P}^2$.  Explicitly, let $d_i=\deg C_i$, $d=\max_i d_i$, $h=\max_i h(C_i)$, and $N=\max_{v\in S}N(v)$.  Let $C_i$ be defined by $f_i\in k[x,y,z]$, $i=1,\ldots, n$.  Let $T=\cup_{i\neq j} \left(C_i\cap C_j\right)(\kbar)$, and for each point $P\in T$, let $I_P=\{i: P\not\in C_i\}$.  For $P\in T$, let 
\begin{equation*}
\Phi_P=\left\{\frac{f_{i}^{d_{j}}}{f_{j}^{d_{i}}}: i,j\in I_P\right\}.  
\end{equation*}
Then $Z$ may be taken to consist of the union of the finite set of points
\begin{equation*}
\{P\in X(k): h(P)<2^{20s+4\delta+75}d^{6s+34}\delta^{5s+8\delta-3}s^{4s+2}N^{d^2}(\log^* N)^{2s}|\Delta|^{3/2}(\log^*|\Delta|)^{3\delta}(h+1)\}
\end{equation*}
and the Zariski closure $Z'$ of the set
\begin{equation*}
\bigcup_{P\in T} \bigcap_{\phi\in \Phi_P}\{Q\in X(\kbar): \phi(Q)=\phi(P)\}.
\end{equation*}
\end{theorem}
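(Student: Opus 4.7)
The plan is to derive Theorem \ref{thP2} as the completely explicit specialization of Theorem \ref{mth2} (equivalently Corollary \ref{cos}) to $X=\mathbb{P}^2$, tracking every constant through the underlying linear forms in logarithms estimates. B\'ezout's theorem shows that any two distinct curves $C_i,C_j\subset\mathbb{P}^2$ meet in finitely many points, so $m=2$ is admissible in Theorem \ref{mth2}, and the hypothesis $(m-1)|S|<n$ reduces to $s<n$. Since the intersection of any $n-1$ of the curves is empty, every $P\in T$ satisfies $|I_P|\geq 2$, and for each pair $i,j\in I_P$ the function $\phi=f_i^{d_j}/f_j^{d_i}\in k(\mathbb{P}^2)$ is regular and nonvanishing at $P$ with $\Supp\phi\subset\Supp D$. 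This verifies explicitly all the geometric hypotheses of Theorem \ref{mth2}, so a $Z$ of the required shape exists; the remaining work is to make everything explicit.

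Given an $S$-integer point $Q\in(\mathbb{P}^2\setminus\cup_iC_i)(\O_{k,S})$ and a suitable homogenization, a pigeonhole argument using $s<n$ produces some place $v\in S$ and indices $i\neq j$ such that both $v(f_i(Q))$ and $v(f_j(Q))$ are large. A point $P\in (C_i\cap C_j)(\kbar)\subset T$ that is $v$-adically close to $Q$ then has $f_k(Q)$ a $v$-adic unit for every $k\in I_P$ up to bounded contributions depending only on the coefficients of the $f_i$; consequently every $\phi\in\Phi_P$ evaluates at $Q$ to an $S$-unit-like element of $\kbar^*$ that is $v$-adically close to the nonzero constant $\phi(P)$. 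If $Q\notin Z'$ there is at least one $\phi\in\Phi_P$ with $\phi(Q)\neq \phi(P)$, so the quantity $\phi(Q)/\phi(P)-1$ is nonzero and $v$-adically small, and taking logarithms turns it into a nontrivial small linear form in logarithms of $S$-units (with algebraic coefficients coming from $\phi(P)$).

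Invoking the explicit Matveev bound at archimedean $v$ and Yu's corresponding $p$-adic bound at non-archimedean $v$ yields an effective upper bound on the heights of the values $\phi(Q)$, and the standard conversion, which costs factors controlled by $\max_i\deg C_i\leq d$ and $\max_i h(C_i)\leq h$, translates this into a bound on $h(Q)$. The main obstacle is bookkeeping: one has to combine Matveev's and Yu's inequalities (each with its explicit dependence on $s$, $\delta$, $N$, $|\Delta|$ and the heights of the $S$-units involved) with the passage from the heights of $f_i(Q)$ back to the height of $Q$, absorbing the constants coming from the coefficients of the $f_i$ and from the local computation near $P$. The factor $N^{d^2}$ in the final bound traces back to the B\'ezout contribution $d_id_j\leq d^2$ controlling the $v$-adic distance from $Q$ to the associated intersection point $P$, while the explicit powers of $d$, $\delta$, and $s$ are direct artifacts of the shape of Matveev's and Yu's bounds.
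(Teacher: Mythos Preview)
Your high-level strategy is exactly the paper's: pigeonhole on $s<n$ to find $v\in S$ and $i\neq j$ with $\min\{h_{C_i,v}(P),h_{C_j,v}(P)\}\geq \tfrac{1}{s}h(P)$, then for each intersection point $Q\in(C_i\cap C_j)(\kbar)$ invoke linear forms in logarithms on $\phi(P)$ near $\phi(Q)$, and finish by comparing $h(\phi(P))$ with $h(P)$. However, several steps that you label ``bookkeeping'' are in fact substantive tools that the paper develops and uses, and without which the explicit bound cannot be obtained.

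First, you pass directly from ``$h_{C_i,v}(P)$ and $h_{C_j,v}(P)$ large'' to ``$P$ is $v$-adically close to some $Q\in C_i\cap C_j$''. The paper makes this quantitative via an \emph{effective Hilbert Nullstellensatz} (Masser--W\"ustholz), writing $a(\prod_l g_l)^M=a_1f_1+a_2f_2$ with explicit degree and height bounds; this is what produces the explicit version of Lemma~\ref{L5} and is not a triviality. Second, the heights $h(Q_l)$ of the intersection points feed directly into the Baker-type constant (via $h(\alpha)=h(\phi(Q_l))$), and the paper bounds $\sum_l h(Q_l)$ using an \emph{arithmetic B\'ezout theorem} (Philippon); you do not mention this. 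Third, the conversion $h_{Q,w}(P)\leq h_{\phi(Q),w}(\phi(P))+\text{(explicit)}$ is the content of the paper's Lemma~\ref{lemn}, a nontrivial explicit calculation, not merely a height comparison. Fourth, the paper does not cite Matveev and Yu separately but uses the B\'erczes--Evertse--Gy\H{o}ry packaging (Theorem~\ref{Baker}), which is formulated for a finitely generated group $G$ and accounts for the fact that $\phi(P)$ is only an $S$-unit up to a factor $\beta$ of bounded height (handled via Theorem~\ref{abl}); your ``$S$-unit-like element'' glosses over this.

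One small correction: the factor $N^{d^2}$ does not arise from the $v$-adic distance estimate. It comes from the Baker constant $c_1$ through $N(w_l)$, where $w_l$ is the place of $k(Q_l)$ below $w$; since $[k(Q_l):k]\leq d_id_j\leq d^2$, one has $N(w_l)\leq N(v)^{d^2}\leq N^{d^2}$. So the B\'ezout bound $d^2$ enters via the degree of the residue field of the intersection point, not via a proximity estimate.
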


Being more interested in the general shape of the explicit height bound in the theorem, we have made no effort here to obtain the best possible explicit bound coming from the proof of Theorem \ref{thP2} (and indeed, carefully following the proof gives a superior, but more cumbersome, expression).

Finally, we give a brief sketch of the proof of Theorem \ref{mth2}.  The proof is a generalization of the proofs of Bilu's and Vojta's results (Theorem \ref{BT} and Theorem \ref{TV}).  Let $R$ be a set of $S$-integral points on $X\setminus D$, as in Theorem \ref{mth2}, and let $P\in R$.  Let $T\subset X(\kbar)$ be the finite set of points contained in the support of $m$ or more divisors $D_i$.  Using the assumption on the cardinality of $S$, the pigeonhole principle implies that for some point $Q\in T$ and $v\in S$, $P$ is $v$-adically close to $Q$.  Our hypotheses then provide us with a nonconstant rational function $\phi\in k(X)$ with zeros and poles only in $\Supp D\setminus \{Q\}$.  Since $P\in R$, $\phi(P)$ is essentially an $S$-unit, and $\phi(P)$ is $v$-adically close to $\phi(Q)$.  Now assuming that $\phi(P)\neq \phi(Q)$ (this is where a higher-dimensional exceptional set may appear), we apply a Baker-type inequality to conclude that $\phi(P)$, and hence $P$, must have height bounded by an explicit constant.

\section{Notation and Definitions}

Let $k$ be a number field and let $S$ be a finite set of places of $k$ containing the archimedean places.  We use $\O_k$, $\O_{k,S}$, and $\O_{k,S}^*$ to denote the ring of integers of $k$, ring of $S$-integers of $k$, and group of $S$-units of $k$, respectively.  Throughout, we let $\delta=[k:\mathbb{Q}]$ be the degree of $k$, $\Delta$ the (absolute) discriminant of $k$, $R_k$ the regulator of $k$, and $R_S$ the $S$-regulator.  

Recall that we have a canonical set $M_k$ of places (or absolute values) of $k$ consisting of one place for each prime ideal $\mathfrak{p}$ of $\mathcal{O}_k$, one place for each real embedding $\sigma:k \to \mathbb{R}$, and one place for each pair of conjugate embeddings $\sigma,\overline{\sigma}:k \to \mathbb{C}$.  For $v\in M_k$, we define
\begin{equation*}
N(v)=
\begin{cases}
2 &\text{if $v$ is archimedean},\\
N(\mathfrak{p}) &\text{if $v$ corresponds to the prime $\mathfrak{p}$},
\end{cases}
\end{equation*}
where $N(\mathfrak{p})=|\O_k/\mathfrak{p}|$ is the norm of $\mathfrak{p}$.  We normalize our absolute values so that $|p|_v=\frac{1}{p}$ if $v$ corresponds to $\mathfrak{p}$ and $\mathfrak{p}$ lies above a rational prime $p$, and $|x|_v=|\sigma(x)|$ if $v$ corresponds to an embedding $\sigma$.  For $v\in M_k$, let $k_v$ denote the completion of $k$ with respect to $v$.  We set
\begin{equation*}
\|x\|_v=|x|_v^{[k_v:\mathbb{Q}_v]/[k:\mathbb{Q}]}.
\end{equation*}
A fundamental equation is the product formula
\begin{equation*}
\prod_{v\in M_k}\|x\|_v=1,
\end{equation*}
which holds for all $x\in k^*$.

For $x$ a positive real number we let
\begin{equation*}
\log^*x=\max\{\log x, 1\},
\end{equation*}
\begin{equation*}
\epsilon_v(x)=
\begin{cases}
x &\text{if $v$ is archimedean},\\
1 &\text{otherwise},
\end{cases}
\end{equation*}
and 
\begin{equation*}
\epsilon_v'(x)=\epsilon_v(x)^{[k_v:\mathbb{Q}_v]/[k:\mathbb{Q}]}.
\end{equation*}
We note that 
\begin{equation*}
\prod_{v\in M_k}\epsilon_v'(x)=x.
\end{equation*}

In this notation, for $v\in M_k$ and $x,y\in k$ we have the inequalities
\begin{align*}
|x+y|_v&\leq \epsilon_v(2)\max\{|x|_v,|y|_v\},\\
\|x+y\|_v&\leq \epsilon_v'(2)\max\{\|x\|_v,\|y\|_v\}.
\end{align*}

For $v\in M_k$ and $\alpha\in k$, we define the local height 
\begin{equation*}
h_v(\alpha)=\log \max\{\|\alpha\|_v,1\}
\end{equation*}
and the height
\begin{equation*}
h(\alpha)=\sum_{v\in M_k}h_v(\alpha).
\end{equation*}
We will frequently make the identification $\mathbb{P}^1(k)=k\cup \{\infty\}$.  More generally, for a point $P=(x_0,\ldots,x_n)\in \mathbb{P}^n(k)$, we have the absolute logarithmic height
\begin{equation*}
h(P)=\sum_{v\in M_k} \log \max\{\|x_0\|_v,\ldots,\|x_n\|_v\}.
\end{equation*}
Note that this is independent of the number field $k$ and the choice of coordinates $x_0,\ldots, x_n\in k$.

For a polynomial $f\in k[x_1,\ldots, x_n]$ and $v\in M_k$, we let $|f|_v$ denote the maximum of the absolute values of the coefficients of $f$ with respect to $v$.  We define $\|f\|_v$ similarly.  We define the height of a polynomial by
\begin{equation*}
h(f)=\sum_{v\in M_k}\|f\|_v.
\end{equation*}
This is the same as the height of the point in projective space whose coordinates are given by the coefficients of $f$.  If $\phi:\mathbb{P}^n\to \mathbb{P}^m$ is a rational map, where $\phi=(f_0,\ldots, f_m)$ and $f_0,\ldots, f_m\in k[x_0,\ldots, x_n]$ are polynomials with no common factor, then we define
\begin{equation*}
h(\phi)=\sum_{v\in M_k}\max_i \|f_i\|_v.
\end{equation*}

Let $D$ be a hypersurface in $\mathbb{P}^n$ defined by a homogeneous polynomial $f\in k[x_0,\ldots, x_n]$ of degree $d$.  We define
\begin{equation*}
h(D)=h(f).
\end{equation*}
For $v\in M_k$ and $P=(x_0,\ldots, x_n)\in\mathbb{P}^n(k)\setminus D$, $x_0,\ldots, x_n\in k$, we define the local height function
\begin{equation}
\label{hdv}
h_{D,v}(P)=\log \frac{\|f\|_v\max_i \|x_i\|_v^d}{\|f(P)\|_v}.
\end{equation}
Note that this definition is independent of the choice of the defining polynomial $f$ and the choice of the coordinates for $P$.  We let $h_D(P)=(\deg D)h(P)$.  By the product formula, if $P\in \mathbb{P}^n(k)\setminus D$, then $\sum_{v\in M_k}h_{D,v}(P)=h_D(P)$.

If $P=(x_0,\ldots, x_n), Q=(y_0,\ldots, y_n)\in \mathbb{P}^n(k)$, $x_i,y_i\in k$, $P\neq Q$, and $v\in M_k$, we define
\begin{equation*}
h_{Q,v}(P)=\frac{\max_i \|x_i\|_v \max_i \|y_i\|_v}{\max_{i,j}\|x_iy_j-x_jy_i\|_v}.
\end{equation*}

Much more generally, one can associate a height to any closed subscheme of a projective variety.  We give here a quick summary of the relevant properties of such heights and refer the reader to Silverman's paper \cite{Sil} for the general theory and details.

Let $Y$ be a closed subscheme of a projective variety $X$, both defined over $k$.  For $v\in M_k$, one can associate a local height function $h_{Y,v}:X(k)\setminus Y\to \mathbb{R}$, well-defined up to $O(1)$, and a global height function $h_Y$, well-defined up to $O(1)$, which is a sum of appropriate local height functions.  If $Y=D$ is an effective (Cartier) divisor (which we will frequently identify with the associated closed subscheme), these height functions agree with the usual height functions associated to divisors.  Local height functions satisfy the following properties: if $Y$ and $Z$ are two closed subschemes of $X$, defined over $k$, and $v\in M_k$, then up to $O(1)$,
\begin{align*}
h_{Y\cap Z,v}&=\min\{ h_{Y,v},h_{Z,v}\},\\
h_{Y+Z,v}&=h_{Y,v}+h_{Z,v},\\
h_{Y,v}&\leq h_{Z,v}, &&\text{ if }Y\subset Z,\\
h_{Y,v}&\leq c h_{Z,v}, &&\text{ if }\Supp Y\subset \Supp Z,
\end{align*}
for some constant $c>0$, where $\Supp Y$ denotes the support of $Y$.  If $\phi:W\to X$ is a morphism of projective varieties, then
\begin{equation*}
h_{Y,v}(\phi(P))=h_{\phi^*Y,v}(P), \quad \forall P\in W(k)\setminus \phi^*Y.
\end{equation*}
Here, $Y\cap Z$, $Y+Z$, $Y\subset Z$, and $\phi^*Y$ are defined in terms of the associated ideal sheaves (see \cite{Sil}).  Global height functions satisfy similar properties (except the first property above, which becomes $h_{Y\cap Z}\leq\min\{ h_{Y},h_{Z}\}+O(1)$).

Let $D$ be a divisor on a nonsingular projective variety $X$.  For a nonzero rational function $\phi\in \kbar(X)$, we let $\dv(\phi)$ denote the divisor associated to $\phi$.  We let $\Supp D$ denote the support of $D$ and $\Supp \phi=\Supp \dv(\phi)$.  Let 
\begin{equation*}
L(D)=\{\phi\in \kbar(X): \dv(\phi)+D\geq 0\}
\end{equation*}
and $h^0(D)=\dim H^0(X,\O(D))=\dim L(D)$.  If $h^0(nD)=0$ for all $n>0$, then we let $\kappa(D)=-\infty$.  Otherwise, we define the dimension of $D$ to be the integer $\kappa(D)$ such that there exist positive constants $c_1$ and $c_2$ with
\begin{equation*}
c_1 n^{\kappa(D)} \leq h^0(nD)\leq c_2 n^{\kappa(D)}
\end{equation*}
for all sufficiently divisible $n>0$.  We define a divisor $D$ on $X$ to be big if $\kappa(D)=\dim X$.

Let $D$ be an effective divisor on $X$ and $h_D=\sum_{v\in M_k}h_{D,v}$ a height function associated to $D$.  A set of points $R\subset X(k)\setminus D$ is called a set of $S$-integral points on $X\setminus D$ if there exist constants $c_v$, $v\in M_k$, such that $c_v=0$ for all but finitely many $v$, and for all $v\not\in S$,
\begin{equation*}
h_{D,v}(P)\leq c_v
\end{equation*}
for all $P\in R$.  This is well-defined, independent of how we write $X\setminus D$ \cite[Cor.~1.4.2, Th.~1.4.11]{Voj3}.  There are other essentially equivalent definitions of integrality (see, e.g., \cite[Prop. 1.4.7]{Voj3}), but since our main tools involve heights, this will be the most natural definition for our purposes.

Let $Z$ be a closed subset of $\mathbb{P}^n$ defined over $k$.  Let $S$ be a finite set of places of $k$ containing the archimedean places.  In this case there is a natural set of integral points on $\mathbb{P}^n\setminus Z$.  We define $(\mathbb{P}^n\setminus Z)(\O_{k,S})$ to be the set of points $P\in \mathbb{P}^n(k)$ such that the Zariski closures of $P$ and $Z$ in $\mathbb{P}^n_{\O_k}$ do not meet over any $v\not\in S$.  Equivalently, if $D$ is an effective divisor on $\mathbb{P}^n$, using the local height functions defined in \eqref{hdv} one easily finds that
\begin{align*}
(\mathbb{P}^n\setminus D)(\O_{k,S})&=\left\{P\in\mathbb{P}^n(k)\setminus D: h_{D,v}(P)=0, \forall v\in M_k\setminus S\right\}\\
&=\left\{P\in\mathbb{P}^n(k)\setminus D: \sum_{v\in S}h_{D,v}(P)=(\deg D)h(P)\right\}.
\end{align*}

\section{General Results}

For the purpose of clarifying our later proofs, we first collect together various elementary facts about heights.

\subsection{Heights}

Throughout, we let $X$ be a nonsingular projective variety defined over a number field $k$.  We first recall the Northcott property for heights associated to ample divisors.

\begin{lemma}
\label{L1}
Let $D$ be an ample divisor on $X$ and $c\in \mathbb{R}$.  Then the set of points $\{P\in X(k): h_D(P)<c\}$ is finite.
\end{lemma}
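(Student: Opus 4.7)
The plan is to reduce the statement to the classical Northcott finiteness theorem on projective space by exploiting ampleness to produce an embedding of $X$. Since $D$ is ample, I would choose a positive integer $n$ such that $nD$ is very ample. A basis $s_0, \ldots, s_N$ of $H^0(X, \O(nD))$ then determines a closed embedding $\phi: X \hookrightarrow \mathbb{P}^N$ with $\phi^*H$ linearly equivalent to $nD$, where $H$ denotes a hyperplane in $\mathbb{P}^N$.

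The next step is to transport the height hypothesis through $\phi$. Using the functoriality of height functions recalled in Section~2, applied to the morphism $\phi$ and the hyperplane $H$, together with the additivity $h_{nD} = n h_D + O(1)$ and the fact that $\phi^*H$ and $nD$ are linearly equivalent, I would obtain
\[
n h_D(P) = h_H(\phi(P)) + O(1) = h(\phi(P)) + O(1)
\]
for $P \in X(k)$, where $h$ denotes the absolute logarithmic height on $\mathbb{P}^N$. Hence the hypothesis $h_D(P) < c$ translates into an upper bound $h(\phi(P)) \le nc + C$ for some constant $C$ depending only on the chosen height functions and the embedding.

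To conclude, I would invoke the classical Northcott property for $\mathbb{P}^N$: the set $\{Q \in \mathbb{P}^N(k) : h(Q) \le B\}$ is finite for every real $B$, since every point of $\mathbb{P}^N(k)$ has degree at most $[k:\mathbb{Q}]$ over $\mathbb{Q}$ and there are only finitely many algebraic numbers of bounded height and bounded degree. Applied to $Q = \phi(P)$, and combined with the injectivity of $\phi$ on $k$-rational points, this gives the desired finiteness. There is essentially no substantive obstacle: the only minor subtlety is the $O(1)$ ambiguity inherent to height functions associated to divisor classes, but these are absorbed harmlessly into the single constant $C$ since only an upper bound is needed. The argument uses ampleness in an essential way, precisely because ampleness (rather than mere effectiveness or bigness) is what guarantees that some multiple of $D$ provides a closed embedding into projective space.
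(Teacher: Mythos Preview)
Your argument is correct and is the standard proof of the Northcott property for ample heights. Note, however, that the paper does not actually give a proof of this lemma: it is simply recalled as a well-known fact (``We first recall the Northcott property for heights associated to ample divisors''), so there is no paper proof to compare against. Your reduction via a very ample multiple of $D$ to the classical Northcott theorem on $\mathbb{P}^N$ is exactly the canonical justification one would supply if a proof were required.
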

More generally, finiteness holds for points of $X(\kbar)$ of bounded degree and bounded ample height.  Every height is bounded by a multiple of an ample height \cite[Prop 1.2.9(f)]{Voj3}.

\begin{lemma}
\label{L2}
Let $A$ and $D$ be divisors on $X$ with $A$ ample.  Then there exists a positive integer $N$ such that
\begin{equation*}
h_D(P)<Nh_{A}(P)+O(1)
\end{equation*}
for all $P\in X(\kbar)$.
\end{lemma}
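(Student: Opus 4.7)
The plan is to reduce the claim to the elementary fact that heights associated to base-point-free divisors are bounded below on all of $X(\kbar)$, combined with the additivity (up to $O(1)$) of Weil heights in the divisor argument.

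First I would invoke the following standard fact from the theory of ample line bundles (e.g.\ Hartshorne, II.7): since $A$ is ample, for any divisor $D$ on $X$ there exists a positive integer $N$ such that $NA-D$ is very ample, and in particular globally generated. Choose and fix such an $N$, and set $B = NA - D$, so that $\O(B)$ is generated by its global sections.

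Next, I would use the fact that a globally generated divisor $B$ determines a morphism $\phi_B : X \to \mathbb{P}^m$ with $\phi_B^*\O_{\mathbb{P}^m}(1) \cong \O_X(B)$. By the functoriality property of heights stated earlier in Section~2, this gives
\begin{equation*}
h_B(P) = h(\phi_B(P)) + O(1)
\end{equation*}
for all $P \in X(\kbar)$. Since the absolute logarithmic height on $\mathbb{P}^m$ is bounded below (for appropriately normalized representatives, in fact nonnegative), we conclude that $h_B(P) \geq O(1)$ for every $P \in X(\kbar)$, with the implied constant independent of the field of definition of $P$.

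Finally, I would combine this with additivity of heights: up to $O(1)$,
\begin{equation*}
N h_A(P) = h_{NA}(P) = h_{D + B}(P) = h_D(P) + h_B(P) + O(1).
\end{equation*}
Using the lower bound $h_B(P) \geq O(1)$ from the previous step yields $h_D(P) \leq N h_A(P) + O(1)$, and after absorbing constants we obtain the strict inequality as stated. There is no substantive obstacle here: the only mild subtlety is ensuring the $O(1)$ constants are uniform over all of $X(\kbar)$ (not merely over $X(k)$), which holds because Weil heights extend naturally to $\kbar$-points and the constants produced by the height machine in Silverman \cite{Sil} depend only on the geometric data $(X,A,D,N)$.
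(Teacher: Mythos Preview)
Your argument is correct and is exactly the standard proof: pick $N$ with $NA-D$ globally generated, use that base-point-free heights are bounded below via the associated morphism to projective space, and conclude by additivity. The paper itself does not supply a proof of this lemma at all---it simply records the statement with a reference to \cite[Prop.~1.2.9(f)]{Voj3}---so there is nothing further to compare; your write-up is essentially what lies behind that citation.
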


The next two lemmas give relations between the height of a point and its image under a rational map.

\begin{lemma}
\label{L3}
Let $\phi\in k(X)$ and let $P_1,\ldots, P_q\in X(k)\setminus \Supp \phi$.  Let $S$ be a finite set of places of $k$.  Then
\begin{equation*}
\sum_{i=1}^q \sum_{v\in S}  h_{P_i,v}(P)< \sum_{i=1}^q \sum_{v\in S}  h_{\phi(P_i),v}(\phi(P))+O(1)
\end{equation*}
for all $P\in X(k)\setminus \Supp \phi$ such that $\phi(P)\neq \phi(P_i)$, $i=1,\ldots, q$.
\end{lemma}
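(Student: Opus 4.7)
The plan is to deduce Lemma L3 directly from the monotonicity and functoriality properties of local heights with respect to closed subschemes recalled in Section 2. The key geometric observation is that, for each $i$, the reduced point $P_i$, viewed as a closed subscheme of $X$, is contained in the pullback subscheme $\phi^*(\phi(P_i))$.

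First I would verify this subscheme inclusion locally. Since $P_i \notin \Supp \phi$, the rational function $\phi$ is regular at $P_i$ and takes a value $\phi(P_i)\in k$. In a neighborhood of $P_i$, the pullback $\phi^*(\phi(P_i))$ is the effective Cartier divisor cut out by the single regular function $\phi - \phi(P_i)$. This function vanishes at $P_i$, so its principal ideal is contained in $\mathcal{I}_{P_i}$, giving $P_i \subset \phi^*(\phi(P_i))$ as closed subschemes of $X$.

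Next I would chain the two listed properties of local heights. The inclusion yields
\begin{equation*}
h_{P_i,v}(P) \leq h_{\phi^*(\phi(P_i)),v}(P) + O(1),
\end{equation*}
valid for all relevant $P$, and functoriality of local heights under pullback gives
\begin{equation*}
h_{\phi^*(\phi(P_i)),v}(P) = h_{\phi(P_i),v}(\phi(P)) + O(1),
\end{equation*}
whenever $\phi$ is defined at $P$ and $\phi(P)\neq \phi(P_i)$, which holds by hypothesis. Combining these bounds and summing over the finitely many $i\in\{1,\ldots,q\}$ and $v\in S$ absorbs the resulting finite collection of $O(1)$ constants into a single error term; the strict inequality in the statement is obtained by inflating this constant by any positive quantity.

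The only mild obstacle is that $\phi$ is a rational map, not a morphism, so one may worry that the functoriality of local heights does not apply directly. This I would handle either by resolving the indeterminacy of $\phi$ through a sequence of blowups along the indeterminacy locus (all of codimension $\geq 2$, hence disjoint from the relevant points $P, P_1, \ldots, P_q$, so local heights at these points are unaffected), or by simply defining $\phi^*(\phi(P_i))$ via its local equation $\phi - \phi(P_i) = 0$ as above and invoking the local nature of $h_{P_i,v}$. Either approach yields an $O(1)$ depending only on $\phi$, $P_1,\ldots,P_q$, $S$, and the chosen local height functions, uniformly in the varying point $P$.
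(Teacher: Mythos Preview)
Your proposal is correct and follows essentially the same approach as the paper: the key steps are the subscheme inclusion $P_i\subset \phi^*(\phi(P_i))$, monotonicity of local heights, and functoriality, together with a resolution of the indeterminacy of $\phi$ by blowing up. The paper carries out exactly this blow-up argument (passing to $\tilde{X}\to X$ so that $\phi$ extends to a morphism $\tilde{\phi}$), which you correctly identify as the way to make the functoriality step rigorous.
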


\begin{proof}
For an appropriate blow-up $\pi:\tilde{X}\to X$, where $\pi$ is an isomorphism on $\pi^{-1}(X\setminus \Supp \phi)$, $\phi$ extends to a morphism $\tilde{\phi}:\tilde{X}\to \mathbb{P}^1$ such that $\tilde{\phi}=\phi\circ \pi$ on $\pi^{-1}(X\setminus \Supp \phi)$.  For a point $P\in X(k)\setminus \Supp \phi$, we let $\tilde{P}=\pi^{-1}(P)$.  Let $P\in X(k)\setminus \Supp \phi$ be such that $\phi(P)\neq \phi(P_i)$, $i=1,\ldots, q$.  By functoriality of heights,
\begin{equation*}
\sum_{i=1}^q \sum_{v\in S}h_{\tilde{\phi}^*\phi(P_i),v}(\tilde{P})=\sum_{i=1}^q \sum_{v\in S}  h_{\phi(P_i),v}(\tilde{\phi}(\tilde{P}))+O(1).
\end{equation*}
Since $\tilde{P_i}$ is in the support of $\tilde{\phi}^*\phi(P_i)$, we have
\begin{equation*}
\sum_{i=1}^q \sum_{v\in S}h_{\tilde{P_i},v}(\tilde{P})<\sum_{i=1}^q \sum_{v\in S}h_{\tilde{\phi}^*\phi(P_i),v}(\tilde{P})+O(1).
\end{equation*}
Now the lemma follows from the above two equations, noting that $\tilde{\phi}(\tilde{P})=\phi(P)$ and by functoriality, $h_{\tilde{P_i},v}(\tilde{P})=h_{\pi^*P_i,v}(\tilde{P})=h_{P_i,v}(P)+O(1)$.
\end{proof}

\begin{lemma}
\label{L4}
Let $D$ be an effective divisor on $X$ and let $\phi\in k(X)$ be a rational function with every pole contained in $\Supp D$.  Then for some constant $c>0$,
\begin{equation*}
h(\phi(P))< ch_D(P)+O(1)
\end{equation*}
for all $P\in X(\kbar)\setminus \Supp \phi$.
\end{lemma}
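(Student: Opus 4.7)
The plan is to mirror the proof of Lemma \ref{L3}: resolve the indeterminacy of $\phi$ by a suitable blow-up, pass to functoriality of heights under the resulting morphism to $\mathbb{P}^1$, and then compare the pulled-back height with $h_D$ using the height comparison property for closed subschemes with nested supports.

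First, I would choose a blow-up $\pi: \tilde X \to X$ with centers contained in the indeterminacy locus of $\phi$ such that $\tilde \phi := \phi \circ \pi$ extends to a morphism $\tilde \phi: \tilde X \to \mathbb{P}^1$. The indeterminacy locus lies in the intersection of the zero and pole divisors of $\phi$, and in particular in the pole locus, which by hypothesis sits inside $\Supp D$. Since $\pi$ is an isomorphism over $X \setminus \Supp \phi$, for any $P \in X(\kbar) \setminus \Supp \phi$ there is a unique $\tilde P = \pi^{-1}(P) \in \tilde X(\kbar)$ satisfying $\tilde \phi(\tilde P) = \phi(P)$.

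Writing $\infty \in \mathbb{P}^1$ for the point at infinity, the standard height on $\mathbb{P}^1$ satisfies $h(x) = h_\infty(x) + O(1)$ for $x \neq \infty$, so by functoriality of heights for the morphism $\tilde \phi$,
\begin{equation*}
h(\phi(P)) = h_\infty(\tilde \phi(\tilde P)) + O(1) = h_{\tilde \phi^* \infty}(\tilde P) + O(1).
\end{equation*}
The pole divisor $\tilde \phi^* \infty$ of $\tilde \phi$ decomposes into proper transforms of components of the pole divisor of $\phi$, which lie inside $\pi^{-1}(\Supp D)$ by hypothesis, together with exceptional components of $\pi$, which lie over centers contained in $\Supp D$. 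Thus $\Supp \tilde \phi^* \infty \subset \pi^{-1}(\Supp D) = \Supp \pi^* D$. The height comparison property then furnishes a constant $c > 0$ with $h_{\tilde \phi^* \infty}(\tilde P) \leq c\, h_{\pi^* D}(\tilde P) + O(1)$, and a final application of functoriality yields $h_{\pi^* D}(\tilde P) = h_D(P) + O(1)$. Chaining these inequalities delivers $h(\phi(P)) < c\, h_D(P) + O(1)$.

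The main (and only mild) obstacle is confirming the containment $\Supp \tilde \phi^* \infty \subset \pi^{-1}(\Supp D)$. The point is that the only sources of new poles for $\tilde \phi$, beyond proper transforms of existing ones, are exceptional divisors of $\pi$, and those exceptional divisors sit over the indeterminacy locus, itself contained in the pole locus of $\phi$ and hence in $\Supp D$. Everything else is a formal application of the functoriality and comparison properties of local and global heights recalled in the preliminary section.
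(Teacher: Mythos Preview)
Your proof is correct and follows essentially the same route as the paper's: resolve indeterminacy by a blow-up $\pi:\tilde X\to X$, use functoriality to write $h(\phi(P))=h_{\tilde\phi^*\infty}(\tilde P)+O(1)$, invoke the support comparison $\Supp\tilde\phi^*\infty\subset\Supp\pi^*D$ to bound by $c\,h_{\pi^*D}(\tilde P)+O(1)$, and descend via $h_{\pi^*D}(\tilde P)=h_D(P)+O(1)$. If anything, you are slightly more careful than the paper in justifying the key containment $\Supp\tilde\phi^*\infty\subset\Supp\pi^*D$ by tracking where the exceptional divisors land.
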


\begin{proof}
We use the same notation as in the proof of Lemma \ref{L3}.  Let $P\in X(\kbar)\setminus \Supp \phi$.   By functoriality,
\begin{equation*}
h(\phi(P))=h(\tilde{\phi}(\tilde{P}))=h_{\tilde{\phi}^*\infty}(\tilde{P})+O(1).
\end{equation*}
Since $\Supp \tilde{\phi}^*\infty\subset \Supp\pi^*D$, there exists a constant $c>0$ such that 
\begin{equation*}
h_{\tilde{\phi}^*\infty}(\tilde{P})<ch_{\pi^*D}(\tilde{P})+O(1).
\end{equation*}
By functoriality again, $h_{\pi^*D}(\tilde{P})=h_D(P)+O(1)$ and the result follows.
\end{proof}

The next lemma is crucial in our later proofs.

\begin{lemma}
\label{L5}
Let $E_1,\ldots, E_m$ be effective divisors on $X$, defined over $k$, such that $\cap_{i=1}^m E_{i}$ consists of a finite number of points, all defined over $k$.  Let $v\in M_k$.  Then there exists a positive integer $N$ such that
\begin{equation*}
\min_i h_{E_{i},v}(P)\leq N\sum_{Q\in \cap_{i=1}^m E_{i}(k)}h_{Q,v}(P)+O(1),
\end{equation*}
for all $P\in X(k)\setminus \cup_i E_{i}$.
\end{lemma}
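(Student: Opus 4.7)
The plan is to assemble the bound directly from the elementary properties of local height functions recorded just above the lemma. Let $Y=\cap_{i=1}^m E_i$ denote the scheme-theoretic intersection, i.e.\ the closed subscheme cut out by the sum of the ideal sheaves of the $E_i$. By hypothesis, $\Supp Y=\{Q_1,\ldots,Q_r\}$ is a finite set of $k$-rational points.

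First I would iterate the identity $h_{Y_1\cap Y_2,v}=\min\{h_{Y_1,v},h_{Y_2,v}\}+O(1)$ across the $m$ divisors $E_i$ to obtain
\begin{equation*}
h_{Y,v}(P)=\min_i h_{E_i,v}(P)+O(1)
\end{equation*}
for all $P\in X(k)\setminus \cup_i E_i$. Next, since $\Supp Y$ coincides with the support of the reduced zero-cycle $\sum_{j=1}^r Q_j$, the property $h_{Y',v}\leq c\,h_{Z',v}+O(1)$ whenever $\Supp Y'\subset \Supp Z'$ produces a constant $c>0$ with
\begin{equation*}
h_{Y,v}(P)\leq c\, h_{\sum_{j=1}^r Q_j,v}(P)+O(1).
\end{equation*}
Finally, additivity of local heights in the subscheme gives
\begin{equation*}
h_{\sum_{j=1}^r Q_j,v}(P)=\sum_{j=1}^r h_{Q_j,v}(P)+O(1).
\end{equation*}

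Chaining these three estimates and taking $N$ to be any positive integer with $N\geq c$ yields the desired inequality. The only technical point worth checking is the first step: namely, that the pairwise min-formula extends inductively to an $m$-fold intersection. This is routine, since scheme-theoretic intersection corresponds to summing ideal sheaves, an associative operation, so a straightforward induction on $m$ (applying the pairwise identity $m-1$ times) suffices. Every other step is a direct invocation of one of the properties of local heights listed in the preceding section, so I expect no further obstacles.
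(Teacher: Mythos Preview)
Your argument is correct and essentially identical to the paper's. The only cosmetic difference is that the paper chooses $N$ directly so that the scheme-theoretic containment $\cap_i E_i \subset N\sum_Q Q$ holds and then invokes the monotonicity property $h_{Y,v}\leq h_{Z,v}+O(1)$ for $Y\subset Z$, whereas you invoke the support-containment property to obtain a constant $c$ and then round up to an integer $N$ (which is harmless since the local heights $h_{Q_j,v}$ are bounded below).
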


\begin{proof}
If $\cap_{j=1}^m \Supp E_{i}=\emptyset$, then in fact
\begin{equation*}
\min \{h_{D_{i_1},v}(P),\ldots, h_{D_{i_m},v}(P)\}\leq c
\end{equation*}
for some constant $c$.  This is well known and follows, for instance, from formal properties of heights since in this case $\min \{h_{D_{i_1},v},\ldots, h_{D_{i_m},v}\}$ is a local height associated to the trivial divisor.

Otherwise, let $N$ be a positive integer such that $\cap_{i=1}^m E_{i} \subset N\sum_{Q\in \cap_{i=1}^mE_{i}(k)}Q$.  Then by properties of heights,
\begin{equation*}
\min_i h_{E_{i},v}(P)=h_{\cap_{i=1}^m E_{i},v}(P)+O(1)\leq N\sum_{Q\in \cap_{i=1}^m E_{i}(k)}h_{Q,v}(P)+O(1)
\end{equation*}
for all $P\in X(k)\setminus \cup_i E_{i}$.
\end{proof}

Finally, we record two basic facts about integral points that follow from the definitions and basic properties of height functions (see also \cite[Lemma 1.4.6]{Voj3}).

\begin{lemma}
\label{L6}
Let $D_1,\ldots, D_n$ be effective divisors on $X$, defined over $k$, and let $D=\sum_{i=1}^nD_i$.  Let $S$ be a finite set of places of $k$ containing the archimedean places and let $R$ be a set of $S$-integral points on $X\setminus D$.  Then
\begin{equation*}
\sum_{v\in S}h_{D_i,v}(P)=h_{D_i}(P)+O(1), \quad i=1,\ldots, n,
\end{equation*}
for all $P\in R$.  If $\phi\in k(X)$ with $\Supp \phi\subset \Supp D$, then there exists a finite set of places $T$ of $k$ such that 
\begin{equation*}
\phi(P)\in \O_{k,T}^*
\end{equation*}
 for all $P\in R$.
\end{lemma}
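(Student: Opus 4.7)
The statement splits into two assertions, both of which are formal consequences of the properties of local height functions recalled in the previous subsection, combined with the definition of $S$-integrality.

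For the first equality, my plan is to show that $\sum_{v\notin S}h_{D_i,v}(P)=O(1)$ uniformly for $P\in R$. Since each $D_i$ is a sub-divisor of the effective divisor $D$, the listed inequality $h_{D_i,v}\le h_{D,v}+O(1)$ (in the MB-family sense, with the $O(1)$ vanishing for all but finitely many $v$) combines with the $S$-integrality condition $h_{D,v}(P)\le c_v$ for $v\notin S$ (with $c_v=0$ for almost all $v$) to give a uniform upper bound on $h_{D_i,v}(P)$ whose sum over $v\notin S$ is $O(1)$. A matching lower bound $h_{D_i,v}(P)\ge -c'_v$, with $c'_v=0$ for almost all $v$, comes from the same monotonicity property applied to the trivial subscheme $0\subset D_i$. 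Combining these bounds with the decomposition $h_{D_i}(P)=\sum_{v\in M_k}h_{D_i,v}(P)+O(1)$ yields the claim.

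For the second assertion, view $\phi$ as a rational map $X\dashrightarrow\mathbb{P}^1$ and write $\dv(\phi)=(\phi)_0-(\phi)_\infty$; both $\Supp(\phi)_0$ and $\Supp(\phi)_\infty$ are contained in $\Supp D$ by hypothesis. The property $h_{Y,v}\le c\,h_{Z,v}+O(1)$ whenever $\Supp Y\subset\Supp Z$ lets me bound each of $h_{(\phi)_0,v}(P)$ and $h_{(\phi)_\infty,v}(P)$ by a constant depending on $v$ (equal to $0$ for almost all $v$) for $v\notin S$, using the integrality of $R$ with respect to $D$. Via functoriality applied after a resolution $\pi:\tilde X\to X$ on which $\phi$ becomes a morphism to $\mathbb{P}^1$, these two local heights are identified, up to bounded MB-error, with $\log\max(\|\phi(P)\|_v^{\pm 1},1)$. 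Hence both $\|\phi(P)\|_v\le 1$ and $\|\phi(P)\|_v^{-1}\le 1$ hold for every $v$ outside some finite set $T\supset S$, which gives $\phi(P)\in\O_{k,T}^*$ uniformly for $P\in R$.

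No single step presents a substantive obstacle; the whole argument is careful bookkeeping with the formal properties already recorded. The only point requiring attention is to fix one MB-family of local heights at the outset so that the various ``$O(1)$'' symbols have uniformly vanishing contribution at almost all places and can therefore be summed into a genuine constant independent of $P\in R$.
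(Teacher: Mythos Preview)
Your argument is correct. The paper itself does not give a proof of this lemma: it simply records the two statements as ``basic facts about integral points that follow from the definitions and basic properties of height functions,'' with a reference to \cite[Lemma~1.4.6]{Voj3}. Your write-up is precisely the formal unpacking that the paper's one-line justification gestures at---using $D_i\subset D$ and $0\subset D_i$ for the first part, and $\Supp(\phi)_0,\Supp(\phi)_\infty\subset\Supp D$ together with functoriality through a resolution for the second---so the approaches coincide.

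One small remark of presentation: in the second part, the quantity that functoriality directly identifies with $\log\max(\|\phi(P)\|_v^{\pm1},1)$ is $h_{\tilde\phi^*\infty,v}(\tilde P)$ (resp.\ $h_{\tilde\phi^*0,v}(\tilde P)$) on the blow-up, and it is $\Supp\tilde\phi^*\infty\subset\Supp\pi^*D$ (not $\Supp(\phi)_\infty\subset\Supp D$ on $X$) that you actually use to invoke the inequality $h_{Y,v}\le c\,h_{Z,v}+O(1)$. This is exactly the maneuver in the proof of Lemma~\ref{L4}, and your text already signals it; just make sure the sentence doesn't suggest you are comparing $h_{(\phi)_\infty,v}$ on $X$ directly with $h_v(\phi(P))$ before passing to $\tilde X$.
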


\subsection{Results}
\label{sres}
Let $X$ be a nonsingular projective variety defined over a number field $k$. Let $D_1, \ldots, D_n$ be effective ample divisors on $X$, defined over $k$, and set $D=\sum_{i=1}^nD_i$.  Let $S$ be a finite set of places of $k$ containing the archimedean places and $R$ a set of $S$-integral points on $X\setminus D$.  We need a hypothesis asserting that one can effectively compute the height relations of the last section.  We say that $X, D_1,\ldots, D_n, D, R, S, k$ satisfy (*) if there are height functions associated to $D_1,\ldots, D_n, D$ and points of $X$ such that:

\begin{enumerate}
\item  The finite set in Lemma \ref{L1} is effectively computable for $D$ and any $c\in \mathbb{R}$.
\item  The positive integer $N$ and $O(1)$ in Lemma \ref{L2} are effectively computable for $D$ and $A=D_i$, $i=1,\ldots, n$.
\item  The $O(1)$ in Lemma \ref{L3} is effectively computable for $S$, any $\phi\in k(X)$ with $\Supp \phi\subset \Supp D$, and any set of points $\{P_1,\ldots, P_q\}\subset X(k)\setminus \Supp \phi$.\label{e3}
\item  The $O(1)$ in Lemma \ref{L4} is effectively computable for $D$ and any $\phi$.\label{e4}
\item  The positive integer $N$ and $O(1)$ in Lemma \ref{L5} are effectively computable for any $v\in S$ and subset $\{E_1,\ldots, E_m\}\subset \{D_1,\ldots, D_n\}$.
\item  The finite set $T$ and $O(1)$ in Lemma \ref{L6} are effectively computable for $R$, $D_1,\ldots, D_n, D$, and any $\phi$.\label{e6}
\item  The above remain true upon replacing $k$ by a finite extension of $k$ and $S$ by any finite set of places containing the set of places lying above places of $S$.\label{e7}\\
\item[] Additionally, we assume that we can compute in $\Pic(X)$ as follows:
\item  All of the relations between the images of $D_1,\ldots, D_n$ in $\Pic(X)$ are effectively computable, and for any principal divisor $E$ supported on $D_1,\ldots, D_n$ one can effectively compute a rational function $\phi\in k(X)$ with $\dv(\phi)=E$.
\end{enumerate}

Examples of varieties where (*) is satisfied (for any reasonably defined $R$) include curves, projective space, and more generally projective subvarieties of $\mathbb{P}^N$ where the divisors $D_i$ are hypersurface sections.  We explicitly work out the case $X=\mathbb{P}^2$ in Section \ref{PP}.  For curves, key algorithms include the computation of Riemann-Roch spaces \cite{Sch} and relations amongst points in the Jacobian \cite{Mas}.

The main result of this section is a slightly more explicit version of Theorem \ref{mth2}.

\begin{theorem}
\label{mtheorem}
Let $X$ be a nonsingular projective variety defined over a number field $k$.  Let $D_1, \ldots, D_n$ be effective ample divisors on $X$ defined over $k$.  Let $D=\sum_{i=1}^nD_i$.  Let $m\leq n$ be a positive integer such that for all subsets $I\subset \{1,\ldots, n\}$, $|I|=m$, the set $\cap_{i\in I} (\Supp D_i)(\kbar)$ consists of finitely many points.  Suppose that for each point $P\in (\Supp D)(\kbar)$, there exists a nonconstant rational function $\phi\in k(X)$ satisfying $P\not\in \Supp \phi$ and $\Supp \phi\subset \Supp D$.  Let $S$ be a set of places of $k$ containing the archimedean places with 
\begin{equation*}
(m-1)|S|<n.    
\end{equation*}
Let $R$ be a set of $S$-integral points on $X\setminus D$.  Suppose that $X, D_1,\ldots, D_n, D, R, S, k$ satisfy (*) in Section \ref{sres}.  Then $R$ is contained in an effectively computable proper closed subset $Z$ of $X$.  Explicitly, for a point $P\in X(\kbar)$, let
\begin{equation*}
\Phi_P=\{\phi\in k(X)^*: P\not\in \Supp \phi, \Supp \phi\subset \Supp D\}
\end{equation*}
and let
\begin{equation*}
T=\bigcup_{\substack{I\subset \{1,\ldots, n\}\\|I|=m}}\bigcap_{i\in I} (\Supp D_i)(\kbar).
\end{equation*}
Then we may take $Z$ to consist of a finite effectively computable set of points together with the Zariski closure of the set
\begin{equation*}
\bigcup_{P\in T} \bigcap_{\phi\in \Phi_P}\{Q\in X(\kbar): \phi(Q)=\phi(P)\}.
\end{equation*}
\end{theorem}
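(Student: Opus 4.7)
The plan is to carry out the sketch given at the end of the introduction, with all constants checked against the hypothesis (*). Fix $P \in R$. First, a pigeonhole among the $D_i$'s: since each $D_i$ is ample, Lemma \ref{L2} and (*)(2) give an effective constant $c_1>0$ with $h_D(P) \leq c_1 h_{D_i}(P) + O(1)$ for every $i$. By $S$-integrality (Lemma \ref{L6} and (*)(6)), $\sum_{v \in S} h_{D_i, v}(P) = h_{D_i}(P) + O(1)$, so for each $i$ some place $v(i) \in S$ contributes at least $\frac{1}{|S|}(h_{D_i}(P) - O(1)) \geq \frac{1}{c_1 |S|}(h_D(P) - O(1))$ to the local height sum. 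Setting $I_v = \{i : v(i) = v\}$, the sets $I_v$ cover $\{1, \ldots, n\}$, so the assumption $(m-1)|S| < n$ forces $|I_{v_0}| \geq m$ for some $v_0 \in S$.

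The second step locates a point of $T$ near $P$ at $v_0$. After extending $k$ finitely via (*)(7), assume every point of $T$ is $k$-rational. Choose $I \subset I_{v_0}$ with $|I| = m$; the intersection $\bigcap_{i \in I} \Supp D_i$ is a finite subset of $T$. Lemma \ref{L5} and (*)(5) then produce an effective constant $N > 0$ and a point $Q \in T$ for which
\[
h_{Q, v_0}(P) \geq c_2 h_D(P) - O(1),
\]
with $c_2 > 0$ effective.

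The third step is the dichotomy driven by the assumption on $\Phi_Q$. If $\phi(P) = \phi(Q)$ for every $\phi \in \Phi_Q$, then $P$ lies in the closed subset of the theorem statement. Otherwise there is some $\phi \in \Phi_Q$ with $\phi(P) \neq \phi(Q)$; since $X$ is Noetherian we may fix in advance a finite subset of each $\Phi_Q$ cutting out the same intersection, so the witnessing $\phi$ may be assumed to come from a finite effective list. Lemma \ref{L3} and (*)(3) give
\[
h_{Q, v_0}(P) \leq h_{\phi(Q), v_0}(\phi(P)) + O(1).
\]
Enlarging $S$ once more via Lemma \ref{L6} and (*)(7) makes both $\phi(P)$ and $\phi(Q)$ into $S$-units, so $u := \phi(P)/\phi(Q) \in \O_{k,S}^*$ is a nontrivial $S$-unit and the right-hand side is bounded by $-\log \|u - 1\|_{v_0} + O(1)$. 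Lemma \ref{L4} together with (*)(4) bounds $h(u) \leq c_3 h_D(P) + O(1)$.

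The final step invokes the effective lower bound from the theory of linear forms in logarithms (Baker at archimedean places, Yu at non-archimedean places): for any $v_0 \in S$ and nontrivial $S$-unit $u$,
\[
-\log \|u - 1\|_{v_0} \leq \kappa \log^{*} h(u) + O(1),
\]
with $\kappa$ effective, depending only on $k$ and $S$. Chaining all inequalities yields $c_2 h_D(P) \leq \kappa \log^{*}(c_3 h_D(P)) + O(1)$, which bounds $h_D(P)$ effectively; since $D$ is ample, Lemma \ref{L1} and (*)(1) finish the argument by providing the required finite set of points of bounded height. The main obstacle is packaging the last step: one needs a $p$-adic form of Baker's inequality that applies uniformly over all $v_0 \in S$ with constants effectively computable in $k$ and $S$. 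Everything else amounts to careful bookkeeping of the $O(1)$'s under the hypotheses of (*).
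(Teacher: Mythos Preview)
Your argument is correct and follows essentially the same route as the paper's proof: pigeonhole to find $m$ divisors simultaneously large at a single place $v_0$, Lemma~\ref{L5} to pass to a nearby point $Q\in T$, Lemma~\ref{L3} to transfer to $\phi(P)$ close to $\phi(Q)$, and then an effective Baker-type inequality to bound $h_D(P)$. The paper packages the last step as Theorem~\ref{Baker2} (in the weaker but sufficient form $h_{\alpha,v}(x)\le \epsilon h(x)+C$) and the intermediate Theorem~\ref{B2}, which already treat archimedean and non-archimedean places uniformly via \cite{BEG}, so the ``obstacle'' you flag is not an issue.
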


\begin{remark}
As is typical, more generally one could replace ample with big in the theorem by modifying the theorem slightly (e.g., increasing the exceptional set $Z$ to account for the base loci of certain divisors).
\end{remark}

If $D_1,\ldots, D_{r+1}$ are nontrivial effective divisors on a variety $X$ that generate a subgroup of $\Pic(X)$ of rank $r$ and pairwise do not have any common components, then there exists a nonconstant rational function $\phi$ on $X$ with all zeros and poles contained in the support of $\sum_{i=1}^{r+1}D_i$.  Using this fact to construct appropriate rational functions $\phi$ in Theorem \ref{mtheorem}, we immediately obtain the following corollary.

\begin{corollary}
\label{mcorollary}
Let $X$ be a nonsingular projective variety defined over a number field $k$.  Let $D_1, \ldots, D_n$ be ample effective divisors on $X$, defined over $k$, that generate a subgroup of $\Pic(X)$ of rank $r$ and pairwise do not have any common components.  Let $D=\sum_{i=1}^nD_i$.  Let $m\leq n$ be a positive integer such that for all subsets $I\subset \{1,\ldots, n\}$, $|I|=m$, the set $\cap_{i\in I} (\Supp D_i)(\kbar)$ consists of finitely many points. Suppose that the intersection of the supports of any $n-r$ of the divisors $D_i$ is empty.  Let $S$ be a set of places of $k$ containing the archimedean places with 
\begin{equation*}
(m-1)|S|<n.    
\end{equation*}
Let $R$ be a set of $S$-integral points on $X\setminus D$.  Suppose that $X, D_1,\ldots, D_n, D, R, S, k$ satisfy (*).  Then $R$ is contained in an effectively computable proper closed subset $Z$ of $X$.
\end{corollary}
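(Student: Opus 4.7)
The plan is to deduce the corollary directly from Theorem~\ref{mtheorem}. All hypotheses of that theorem are already assumed here except the existence, for each $P\in(\Supp D)(\kbar)$, of a nonconstant $\phi\in k(X)^*$ with $P\notin\Supp\phi$ and $\Supp\phi\subset\Supp D$. Once such a $\phi$ is constructed for every $P$, Theorem~\ref{mtheorem} applies verbatim and yields the required effectively computable proper closed subset $Z$.

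First I would use the emptiness hypothesis to bound how many of the supports can contain a given point. Since $\bigcap_{i\in J}\Supp D_i=\emptyset$ for every $J\subset\{1,\ldots,n\}$ with $|J|=n-r$, any $P\in X(\kbar)$ lies in at most $n-r-1$ of the sets $\Supp D_i$. Consequently there exist at least $r+1$ indices $i_1,\ldots,i_{r+1}$ with $P\notin \Supp D_{i_j}$ for all $j$.

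Next I would invoke the remark recorded immediately before the corollary, applied to these $r+1$ divisors $D_{i_1},\ldots,D_{i_{r+1}}$. They are effective, pairwise share no common components, and, being a subset of $\{D_1,\ldots,D_n\}$, they generate a subgroup of $\Pic(X)$ of rank at most $r$. Since $r+1$ elements of an abelian group of rank $\leq r$ must satisfy a nontrivial $\mathbb{Z}$-linear relation, some nonzero integer combination $E=\sum_j a_j D_{i_j}$ is principal. Then hypothesis (*), item~8, effectively produces a nonconstant $\phi\in k(X)^*$ with $\dv(\phi)=E$; by construction $\Supp\phi\subset\bigcup_j\Supp D_{i_j}\subset\Supp D$ and $P\notin\Supp\phi$.

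With $\phi$ available for every $P\in(\Supp D)(\kbar)$, the hypothesis of Theorem~\ref{mtheorem} is verified and the corollary follows. The only mildly subtle point is a field-of-definition check: one must ensure $\phi$ may be chosen over $k$ rather than over $\kbar$. This is precisely what (*)-item~8 provides effectively, and it can also be seen intrinsically by applying Hilbert~90 to the Galois-invariant principal divisor $E$. There is no substantive mathematical obstacle beyond this bookkeeping; the entire argument is a clean reduction to Theorem~\ref{mtheorem} that packages the rank and empty-intersection hypotheses into the abstract ``rational function existence'' hypothesis required there.
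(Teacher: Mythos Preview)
Your proposal is correct and follows essentially the same approach as the paper: the paper records the key fact (that $r+1$ effective divisors without common components generating a rank-$r$ subgroup of $\Pic(X)$ yield a nonconstant rational function supported on their union) immediately before the corollary and then says the result follows ``immediately'' from Theorem~\ref{mtheorem}, and you have spelled out precisely that deduction, using the empty-intersection hypothesis to locate $r+1$ divisors avoiding a given $P$.
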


Of particular interest is the case where $X$ is a surface.

\begin{corollary}
\label{surfcor}
Let $X$ be a nonsingular projective surface over a number field $k$.  Let $D_1, \ldots, D_n$ be ample effective divisors on $X$, defined over $k$, that generate a subgroup of $\Pic(X)$ of rank $r$ and pairwise do not have any common components.  Suppose that the intersection of the supports of any $n-r$ of the divisors $D_i$ is empty.  Let $S$ be a set of places of $k$ containing the archimedean places with 
\begin{equation*}
|S|<n.    
\end{equation*}
Let $R$ be a set of $S$-integral points on $X\setminus D$.  Suppose that $X, D_1,\ldots, D_n, D, R, S, k$ satisfy (*).  Then $R$ is contained in an effectively computable proper closed subset $Z$ of $X$.  Let
\begin{equation*}
T=\bigcup_{i\neq j} (D_i\cap D_j)(\kbar).
\end{equation*}
and let $\Phi_P$ be as in Theorem \ref{mtheorem}.  Then we may take $Z$ to consist of a finite effectively computable set of points together with the Zariski closure of the set
\begin{equation*}
\bigcup_{P\in T} \bigcap_{\phi\in \Phi_P}\{Q\in X(\kbar): \phi(Q)=\phi(P)\}.
\end{equation*}
\end{corollary}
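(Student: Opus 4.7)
The plan is to derive Corollary \ref{surfcor} as a direct specialization of Corollary \ref{mcorollary} to the surface case, with the choice $m=2$. The whole point is that the finiteness hypothesis from Theorem \ref{mtheorem} / Corollary \ref{mcorollary}, namely that $\bigcap_{i\in I}(\Supp D_i)(\kbar)$ is finite for all $I$ with $|I|=m$, becomes automatic on a surface as soon as $m=2$ and the $D_i$ pairwise share no components.

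More precisely, I would first argue that on a nonsingular projective surface $X$, if $D_i$ and $D_j$ are effective divisors with no common components, then $\Supp D_i \cap \Supp D_j$ is zero-dimensional. Indeed, $\Supp D_i$ and $\Supp D_j$ are each pure one-dimensional closed subsets of the two-dimensional $X$; if their intersection contained an irreducible curve, that curve would be a common component, contrary to hypothesis. Hence $(\Supp D_i \cap \Supp D_j)(\kbar)$ is finite for every $i \neq j$, which is exactly the finiteness hypothesis of Corollary \ref{mcorollary} with $m=2$.

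Next I would check that the remaining hypotheses line up. With $m=2$, the inequality $(m-1)|S|<n$ of Corollary \ref{mcorollary} reduces to $|S|<n$, which is the hypothesis of Corollary \ref{surfcor}. The assumptions of ampleness, the rank $r$ condition in $\Pic(X)$, the emptiness of the intersection of any $n-r$ of the supports, the integrality condition on $R$, and the condition (*) are identical in the two statements. Therefore Corollary \ref{mcorollary} directly yields an effectively computable proper closed subset $Z \subset X$ containing $R$.

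Finally, to match the explicit description of $Z$, I would unwind the set $T$ from Theorem \ref{mtheorem} with $m=2$:
\begin{equation*}
T = \bigcup_{\substack{I\subset\{1,\ldots,n\}\\ |I|=2}}\,\bigcap_{i\in I}(\Supp D_i)(\kbar) \;=\; \bigcup_{i\neq j}(D_i\cap D_j)(\kbar),
\end{equation*}
which is precisely the $T$ appearing in Corollary \ref{surfcor}. Since $\Phi_P$ is defined the same way in both statements, the described Zariski closure transfers verbatim, giving the explicit form of $Z$. There is no real obstacle here; the only substantive input is the elementary dimension count on surfaces, and everything else is an application of Corollary \ref{mcorollary} together with the explicit form provided by Theorem \ref{mtheorem}.
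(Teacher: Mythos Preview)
Your proposal is correct and matches the paper's approach: the paper presents Corollary~\ref{surfcor} as the surface case of Corollary~\ref{mcorollary} (hence of Theorem~\ref{mtheorem}), which is exactly the specialization with $m=2$ that you carry out. The only point you had to verify---that pairwise intersections of divisors without common components on a surface are finite---is the elementary dimension count you give, and the explicit description of $Z$ then follows from Theorem~\ref{mtheorem} as you note.
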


\subsection{Proofs}
\label{SP}
The key tool in this section is the main theorem from the theory of linear forms in logarithms, which we now state in the language of heights (see Theorem \ref{Baker} for a completely explicit version).

\begin{theorem}
\label{Baker2}
Let $k$ be a number field and $S$ a finite set of places of $k$ containing the archimedean places.  Let $v\in M_k$, $\alpha\in k^*$, and $\epsilon>0$.  Then there exists an effective constant $C$ such that
\begin{equation*}
h_{\alpha,v}(x)\leq \epsilon h(x)+C
\end{equation*}
for all $x\in \O_{k,S}^*$, $x\neq \alpha$.
\end{theorem}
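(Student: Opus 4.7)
The plan is to deduce this directly from Baker's theorem on linear forms in logarithms in the archimedean case, and from its non-archimedean analogue (due to van der Poorten and Yu) for finite places, applied to a representation of $x$ in a fundamental system of $S$-units.

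First I would unpack the local height. Writing $x-\alpha = \alpha(x/\alpha - 1)$ and using the paper's definition of $h_{\alpha,v}$, one checks that
\[ h_{\alpha,v}(x) \leq -\log\|x/\alpha - 1\|_v + C_0, \]
with $C_0$ depending only on $\alpha$ and $v$: when $\|x\|_v$ is much larger than $\|\alpha\|_v$, the denominator $\|x-\alpha\|_v$ is of the same order as $\|x\|_v$ and cancels the numerator contribution $\log\max\{\|x\|_v,1\}$; when $\|x\|_v$ is bounded in terms of $\|\alpha\|_v$, that contribution is already bounded. It therefore suffices to bound $-\log\|x/\alpha-1\|_v$ from above by $\epsilon h(x)+O(1)$.

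Next, invoke Dirichlet's $S$-unit theorem: fix a fundamental system $\beta_1,\ldots,\beta_r$ (with $r=|S|-1$) of $\O_{k,S}^*$ modulo torsion, so that $x$ may be written $x=\zeta\prod_i\beta_i^{a_i}$ with $\zeta$ a root of unity in $k$ and $a_i\in\mathbb{Z}$. A standard effective comparison (via the $S$-regulator $R_S$) gives constants $c_1,c_2>0$, depending only on $k$ and $S$, with
\[ c_1\max_i|a_i| - O(1) \leq h(x) \leq c_2\max_i|a_i| + O(1). \]
Assuming $x\neq\alpha$, form the linear form in logarithms
\[ \Lambda = -\log\alpha + \log\zeta + a_1\log\beta_1 + \cdots + a_r\log\beta_r, \]
using the principal branch of $\log$ in the archimedean case (so that $\log(x/\alpha)=\Lambda+2\pi i m$ for some integer $m$ with $|m|\ll\max_i|a_i|$), or the $p$-adic logarithm in the non-archimedean case (after first raising $x/\alpha$ to a bounded power to land in the domain of $\log_p$).

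The key step is then Baker's theorem, which gives an effective bound
\[ |\Lambda|_v \geq C'(\max_i|a_i|)^{-\kappa} \]
with $\kappa>0$ and $C'>0$ depending effectively on $k$, $S$, $v$, $\alpha$, and the chosen fundamental system, provided $\Lambda\neq 0$; the case $\Lambda=0$ either forces $x=\alpha$, contrary to hypothesis, or reduces to finitely many $x$ which can be enumerated effectively and absorbed into $C$. The elementary estimate $\|1-e^\Lambda\|_v \gg |\Lambda|_v$ (in whichever local field) then yields
\[ -\log\|x/\alpha-1\|_v \leq \kappa\log\max_i|a_i| + O(1) \leq \kappa\log h(x) + O(1), \]
and since $\kappa\log t \leq \epsilon t + C_\epsilon$ with $C_\epsilon$ effective for every $\epsilon>0$, the theorem follows. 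The main obstacle I anticipate is the bookkeeping around the linear form in logarithms: controlling the auxiliary integer $m$ arising from the archimedean branch of $\log$, reducing to the domain of convergence of $\log_p$ in the non-archimedean case, and citing the precise effective version of Baker's theorem in each setting with explicit dependence on $\alpha$ and on the fundamental $S$-units. All remaining steps are routine manipulations with heights and the $S$-unit structure.
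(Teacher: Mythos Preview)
Your proposal is correct and follows the same skeleton as the paper: reduce $h_{\alpha,v}(x)$ to an upper bound on $-\log\|x/\alpha-1\|_v$, then invoke linear forms in logarithms to control this in terms of $h(x)$. The paper does not prove Theorem~\ref{Baker2} separately but deduces its explicit form (Theorem~\ref{Baker}) by the same reduction and then cites a packaged inequality of B\'erczes--Evertse--Gy\H{o}ry \cite[Th.~4.2]{BEG}, which already states that $\log\|x/\alpha-1\|_v<-\epsilon h(x)$ forces $h(x)$ to be effectively bounded in terms of $\epsilon$, $k$, $G$, $v$, $\alpha$.

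The difference is thus one of packaging rather than strategy. You unpack the BEG step by hand: choose a fundamental system of $S$-units, compare $h(x)$ with $\max_i|a_i|$ via the $S$-regulator, form the linear form in logarithms including $\log\alpha$ as an extra term, and apply Baker (or Yu, in the $p$-adic case) directly. This buys you independence from the BEG reference and makes the dependence on $\alpha$ and the $S$-unit basis explicit, at the cost of the bookkeeping you yourself flag (branch ambiguities and the $2\pi i m$ term archimedeanly, reduction to the domain of $\log_p$ non-archimedeanly, and the degenerate case $\Lambda=0$). The paper's route avoids all of this by quoting a result already phrased in the height language, which is why its proof of Theorem~\ref{Baker} is only a few lines of case analysis on whether $h_{\alpha,v}(x)$ is already small.
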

We note that with an ineffective constant $C$, the theorem follows easily from Roth's theorem.  Before proving Theorem \ref{mtheorem}, we prove a result which can be regarded as a higher-dimensional version of Theorem \ref{Baker2}.

\begin{theorem}
\label{B2}
Let $X$ be a nonsingular projective variety defined over a number field $k$ and let $D$ be an effective divisor on $X$ defined over $k$.  Let $\phi\in k(X)$ be a nonconstant rational function with $\Supp \phi\subset \Supp D$.  Let $S$ be a finite set of places of $k$ and $R$ a set of $S$-integral points on $X\setminus D$.  Suppose that $X, D, R, S, k$ satisfy \eqref{e3}, \eqref{e4}, \eqref{e6}, \eqref{e7} of Section \ref{sres}.  Let $P_1,\ldots, P_q\in X(k)\setminus \Supp \phi$ and $\epsilon>0$.  Then
\begin{equation*}
\sum_{i=1}^q \sum_{v\in S}  h_{P_i,v}(P)< \epsilon h_D(P)+O(1)
\end{equation*}
for all $P\in R\setminus Z$, where $Z$ is the proper closed subset of $X$ defined as the Zariski closure of the set
\begin{equation*}
\{P\in X(\kbar): \phi(P)=\phi(P_i) \text{ for some $i\in \{1,\ldots,q\}$}\}.
\end{equation*}
\end{theorem}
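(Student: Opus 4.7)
The plan is to reduce Theorem \ref{B2} to the one-dimensional statement Theorem \ref{Baker2} applied to the single rational function $\phi$, using the integrality of $P \in R$ to convert $\phi(P)$ into an $S$-unit (up to enlarging $S$), and using the hypotheses \eqref{e3}, \eqref{e4}, \eqref{e6}, \eqref{e7} to guarantee effectivity of all intermediate constants.

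First I would apply Lemma \ref{L3} to the rational function $\phi$ and the points $P_1, \ldots, P_q$, obtaining the effective inequality
\begin{equation*}
\sum_{i=1}^q \sum_{v \in S} h_{P_i, v}(P) \;<\; \sum_{i=1}^q \sum_{v \in S} h_{\phi(P_i), v}(\phi(P)) + O(1),
\end{equation*}
valid whenever $\phi(P) \neq \phi(P_i)$ for every $i$. This last condition is exactly what is ensured by excluding the set $Z$: since $P_i \notin \Supp \phi$ (so $\phi(P_i) \in k^*$) and $P \notin Z$, none of the differences $\phi(P) - \phi(P_i)$ vanish. Note also that for $P \in R \subset X(k)\setminus D$ we have $P \notin \Supp \phi$, so $\phi(P) \in k^*$.

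Next, invoking Lemma \ref{L6} (whose conclusion is effective by \eqref{e6} and \eqref{e7}), there is an effectively computable finite set of places $T \supseteq S$ such that $\phi(P) \in \O_{k,T}^*$ for every $P \in R$. Now for each pair $(i,v)$ with $v \in S$, I apply Theorem \ref{Baker2} to the number field $k$, the enlarged set $T$, the place $v$, and the element $\alpha = \phi(P_i) \in k^*$, with an auxiliary parameter $\epsilon_0 > 0$ to be chosen. This yields an effective constant $C_{i,v}$ with
\begin{equation*}
h_{\phi(P_i),v}(\phi(P)) \;\leq\; \epsilon_0 \, h(\phi(P)) + C_{i,v}
\end{equation*}
whenever $\phi(P) \neq \phi(P_i)$. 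Summing over the $q \lvert S \rvert$ pairs gives a bound of the form $q \lvert S \rvert \, \epsilon_0 \, h(\phi(P)) + O(1)$.

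Finally, I use Lemma \ref{L4} (effective by \eqref{e4}) to convert $h(\phi(P))$ into a height on $X$: there is an effectively computable constant $c > 0$ such that $h(\phi(P)) \leq c \, h_D(P) + O(1)$. Choosing $\epsilon_0 = \epsilon / (c q \lvert S \rvert)$ at the outset and combining the inequalities delivers the desired bound
\begin{equation*}
\sum_{i=1}^q \sum_{v \in S} h_{P_i,v}(P) \;<\; \epsilon \, h_D(P) + O(1).
\end{equation*}
The only real content is Theorem \ref{Baker2} itself, which is the deep input (linear forms in logarithms); the main step requiring care is simply the bookkeeping that ensures every constant produced is effective under the standing assumption (*), and that $Z$ is taken large enough to guarantee $\phi(P) \neq \phi(P_i)$ throughout. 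There is no serious obstacle beyond packaging these ingredients.
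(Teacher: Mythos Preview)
Your proposal is correct and follows essentially the same route as the paper's proof: both reduce to Theorem~\ref{Baker2} via Lemma~\ref{L6} (to make $\phi(P)$ an $S$- (or $T$-)unit), Lemma~\ref{L3} (to pass from $h_{P_i,v}$ to $h_{\phi(P_i),v}$), and Lemma~\ref{L4} (to bound $h(\phi(P))$ by $h_D(P)$), with effectivity guaranteed by hypotheses \eqref{e3}, \eqref{e4}, \eqref{e6}, \eqref{e7}. The only cosmetic differences are that the paper enlarges $S$ itself rather than introducing an auxiliary $T\supset S$, and it absorbs the $q|S|$ factor into $\epsilon$ implicitly rather than explicitly as you do.
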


Here, as well as elsewhere, the implicit constant in the $O(1)$ is an effective constant.

\begin{proof}
By Lemma \ref{L6}, since $R$ is a set of $S$-integral points on $X\setminus D$, without loss of generality, after enlarging $S$ we can assume that $\phi(P)\in \O_{k,S}^*$ for all $P\in R$.  Then by Theorem \ref{Baker2},
\begin{equation*}
\sum_{i=1}^q \sum_{v\in S}  h_{\phi(P_i),v}(\phi(P))< \epsilon h(\phi(P))+O(1)
\end{equation*}
for all $P\in R\setminus Z$.  By Lemma \ref{L3},
\begin{equation*}
\sum_{i=1}^q \sum_{v\in S}  h_{P_i,v}(P)< \sum_{i=1}^q \sum_{v\in S}  h_{\phi(P_i),v}(\phi(P))+O(1)
\end{equation*}
for all $P\in X(k)\setminus (Z\cup \Supp \phi)$.  By Lemma \ref{L4}, 
\begin{equation*}
\epsilon h(\phi(P))< \epsilon c h_{D}(P)+O(1)
\end{equation*}
for some positive constant $c$ and all $P\in X(k)\setminus \Supp D$.  Replacing $\epsilon$ by $\epsilon/c$ and combining the above inequalities yields 
\begin{equation*}
\sum_{i=1}^q \sum_{v\in S}  h_{P_i,v}(P)< \epsilon h_{D}(P)+O(1)
\end{equation*}
for all $P\in R\setminus Z$.
\end{proof}

We now prove Theorem \ref{mtheorem}.

\begin{proof}[Proof of Theorem \ref{mtheorem}]
By Lemma \ref{L6}, since $R$ is a set of $S$-integral points on $X\setminus D$, we have
\begin{equation*}
\sum_{v\in S}h_{D_i,v}(P)=h_{D_i}(P)+O(1), \quad i=1,\ldots, n,
\end{equation*}
for all $P\in R$.  Let $P\in R$.  Then for each $i$, there exists a place $v\in S$ such that $h_{D_i,v}(P)\geq \frac{1}{|S|}h_{D_i}(P)+O(1)$.  Since $(m-1)|S|<n$, there exists a place $v\in S$ and distinct elements $i_1,i_2,\ldots, i_m\in\{1,\ldots, n\}$ such that
\begin{equation*}
\min \{h_{D_{i_1},v}(P),\ldots, h_{D_{i_m},v}(P)\}\geq \frac{1}{|S|}\min_j h_{D_{i_j}}(P)+O(1).
\end{equation*}
By Lemma \ref{L2}, there exists a positive integer $N$ such that
\begin{equation*}
h_D(P)\leq Nh_{D_i}(P)+O(1)
\end{equation*}
for all $i$ and all $P\in X(\kbar)$.  So for $P\in R$,
\begin{equation*}
\min \{h_{D_{i_1},v}(P),\ldots, h_{D_{i_m},v}(P)\}\geq \frac{1}{N|S|}h_D(P)+O(1).
\end{equation*}

The theorem is then a consequence of the following lemma.

\begin{lemma}
Let $m, X, D_1,\ldots, D_n, R, S, k$ be as in the hypotheses of Theorem {\rm \ref{mtheorem}}.  Let $\epsilon>0$, $v\in S$, and let $i_1,\ldots, i_m\in \{1,\ldots, n\}$ be distinct integers.  Then the set of points
\begin{equation*}
\{P\in R: \min \{h_{D_{i_1},v}(P),\ldots, h_{D_{i_m},v}(P)\}> \epsilon h_D(P)\}
\end{equation*}
is contained in an effectively computable proper closed subset $Z$ of $X$.  For $P\in X(\kbar)$, let $\Phi_P$ be the set from Theorem {\rm \ref{mtheorem}} and let
\begin{equation*}
T=\bigcap_{j=1}^m (\Supp D_{i_j})(\kbar).
\end{equation*}
Then we may take $Z$ to consist of a finite effectively computable set of points together with the Zariski closure of the set
\begin{equation*}
\bigcup_{P\in T} \bigcap_{\phi\in \Phi_P}\{Q\in X(\kbar): \phi(Q)=\phi(P)\}.
\end{equation*}
\end{lemma}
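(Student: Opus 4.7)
The plan is to use Lemma \ref{L5} together with the pigeonhole principle to pin down a single point $Q$ in the finite set $T$ which is $v$-adically close to $P$, and then invoke the higher-dimensional Baker-type bound Theorem \ref{B2} with a suitable rational function from $\Phi_Q$ to bound the height of $P$. The residual exceptional set will arise precisely because Theorem \ref{B2} only gives a bound outside the Zariski closure of the fibre of $\phi$ over $\phi(Q)$.

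First I would enlarge $k$ and $S$, using property \eqref{e7} of (*), so that every point of $T$ is $k$-rational (the set $T$ is effectively computable because each intersection $\cap_{i\in I}\Supp D_i$ is, and there are only finitely many $m$-subsets $I$). Applying Lemma \ref{L5} to the divisors $D_{i_1},\ldots,D_{i_m}$, whose support intersection is contained in $T$, yields an effectively computable $N_0\in\mathbb{Z}_{>0}$ with
\begin{equation*}
\min_j h_{D_{i_j},v}(P)\leq N_0\sum_{Q\in T}h_{Q,v}(P)+O(1).
\end{equation*}
Combined with the hypothesis $\min_j h_{D_{i_j},v}(P)>\epsilon h_D(P)$ and an averaging step (pigeonhole over the finite set $T$), this forces, for each $P\in R$ under consideration, the existence of some $Q=Q(P)\in T$ with
\begin{equation*}
h_{Q,v}(P)>\frac{\epsilon}{N_0|T|}h_D(P)+O(1).
\end{equation*}

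Now fix $Q\in T$ and suppose $P$ satisfies the above inequality for this particular $Q$. If $P$ does not lie in $\bigcap_{\phi\in\Phi_Q}\{Q'\in X(\kbar):\phi(Q')=\phi(Q)\}$, then there is some $\phi\in\Phi_Q$ with $\phi(P)\neq\phi(Q)$. Apply Theorem \ref{B2} to this $\phi$ with the single base point $P_1=Q$ and any effective $\epsilon'<\epsilon/(N_0|T|)$: since $Q\not\in\Supp\phi$ and $\Supp\phi\subset\Supp D$, we obtain
\begin{equation*}
h_{Q,v}(P)\leq\sum_{v'\in S}h_{Q,v'}(P)<\epsilon' h_D(P)+O(1)
\end{equation*}
for all such $P$ outside the Zariski closure of $\{\phi=\phi(Q)\}$. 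Combining this with the pigeonhole lower bound on $h_{Q,v}(P)$ gives an effectively computable upper bound on $h_D(P)$. Since $D$ is ample (being a sum of ample effective divisors), Lemma \ref{L1} then places such $P$ in an effectively computable finite subset of $X(k)$.

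Taking the union over the finitely many $Q\in T$ and recalling that $P$ escapes the Baker bound only when $P\in\bigcap_{\phi\in\Phi_Q}\{Q':\phi(Q')=\phi(Q)\}$, we obtain the claimed description of $Z$. The main technical obstacle is purely bookkeeping: one must verify that every implicit $O(1)$ produced by Lemmas \ref{L2}--\ref{L6} and by Theorem \ref{B2} remains effective after the field extension, and that the finite set of possible $Q$ together with the (potentially infinite) family $\Phi_Q$ is handled uniformly. This is exactly what hypotheses \eqref{e3}--\eqref{e7} and clause (8) of (*) are designed to guarantee, so no genuinely new difficulty arises.
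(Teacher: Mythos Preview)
Your proposal follows the same line as the paper's proof—reduce to a finite set $T$ via Lemma~\ref{L5}, then feed each $Q\in T$ into Theorem~\ref{B2}—and the pigeonhole step you use (selecting a single $Q$ close to $P$) is an inessential variant of the paper's approach, which instead bounds every $h_{Q,v}(P)$ and sums.

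One point needs tightening. When you write ``there is some $\phi\in\Phi_Q$ with $\phi(P)\neq\phi(Q)$; apply Theorem~\ref{B2} to this $\phi$'', the function $\phi$ is chosen after $P$, and the $O(1)$ in Theorem~\ref{B2} depends on $\phi$ (through Lemmas~\ref{L3}, \ref{L4}, \ref{L6}). So as written this does not yield a bound uniform in $P$. The paper handles this by observing that $\Phi_Q$ is a monoid generated by $k^*$ and finitely many rational functions (this is what clause~(8) of~(*) actually buys you), and then applying Theorem~\ref{B2} once for each generator; since $\bigcap_{\phi\in\Phi_Q}\{\phi=\phi(Q)\}$ coincides with the intersection over the generators, one takes the maximum of finitely many constants. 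Your closing paragraph gestures at this, but the finite-generation observation is the substantive step, not mere bookkeeping. Also, you should treat the case $T=\emptyset$ separately (Lemma~\ref{L5} then gives $\min_j h_{D_{i_j},v}(P)=O(1)$ directly), since your pigeonhole step is vacuous there.
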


\begin{proof}
If $L$ is a finite extension of $k$ and $w$ is a place of $L$ lying above $v$, then we can define a local height function
\begin{equation*}
h_{D_i,w}(P)=\frac{[L_w:k_v]}{[L:k]}h_{D_i,v}(P)
\end{equation*}
for all $P\in X(k)\setminus D_i$.  It follows that without loss of generality, after replacing $k$ by a finite extension of $k$ and $v$ by a place lying above $v$, we may assume that every point $P\in X(\kbar)$ in the intersection $\cap_{j=1}^m \Supp D_{i_j}$ is defined over $k$ (note that by hypothesis this intersection consists of a finite number of points).

If $\cap_{j=1}^m \Supp D_{i_j}=\emptyset$, then by Lemma \ref{L5},
\begin{equation*}
\min \{h_{D_{i_1},v}(P),\ldots, h_{D_{i_m},v}(P)\}\leq C
\end{equation*}
for some effective constant $C$.  In this case, the lemma follows immediately from the fact that since $D$ is ample, the set of points $\{P\in X(k): h_D(P)<C/\epsilon\}$ is finite.

Suppose now that $\cap_{j=1}^m \Supp D_{i_j}\neq \emptyset$, in which case it consists of a finite number $q$ of points.  By Lemma \ref{L5}, there exists a positive integer $N$ such that
\begin{equation*}
\min_j h_{D_{i_j},v}(P)\leq N\sum_{Q\in \cap_{j=1}^m D_{i_j}(k)}h_{Q,v}(P)+O(1)
\end{equation*}
for all $P\in X(k)\setminus \cup_j D_{i_j}$.

Let $Q\in \cap_{j=1}^m D_{i_j}(k)$.  Note that $\Phi_Q$ is a monoid under multiplication, generated by $k^*$ and finitely many rational functions in $k(X)^*$.  Let $\epsilon>0$.  Since $R$ is a set of $S$-integral points on $X\setminus D$, applying Theorem \ref{B2} multiple times yields the inequality
\begin{equation*}
h_{Q,v}(P)< \frac{\epsilon}{2Nq}h_D(P)+O(1)
\end{equation*}
for all $P\in R\setminus Z_Q$, where $Z_Q$ is the Zariski closure of the set
\begin{equation*}
\bigcap_{\phi\in \Phi_Q}\{P\in X(\kbar): \phi(P)=\phi(Q)\}.
\end{equation*}
Summing over all points in $\cap_{j=1}^m D_{i_j}(k)$, we obtain
\begin{equation*}
\min_j h_{D_{i_j},v}(P)\leq N\sum_{Q\in \cap_{j=1}^m D_{i_j}(k)}h_{Q,v}(P)+O(1)< \frac{\epsilon}{2} h_D(P)+C
\end{equation*}
for all $P\in R\setminus Z$, where $Z=\cup_{Q\in \cap_{j=1}^m D_{i_j}(k)}Z_Q$ and $C$ is an effectively computable constant.  So if $P\in R\setminus Z$ satisfies 
\begin{equation*}
\min_j h_{D_{i_j},v}(P)> \epsilon h_D(P),
\end{equation*}
then $h_D(P)<\frac{2}{\epsilon}C$.  It follows that we have
\begin{equation*}
\left\{P\in R: \min_j h_{D_{i_j},v}(P)> \epsilon h_D(P)\right\}\subset Z\cup \left\{P\in X(k): h_D(P)<\frac{2}{\epsilon}C\right\},
\end{equation*}
where $Z$ is a proper Zariski closed subset of $X$ and the last set on the right is finite.
\end{proof}
\end{proof}

\section{An application to polynomial unit equations}
\label{su}

We prove a complete version of Corollary \ref{gunit} from the Introduction.
\begin{corollary}
\label{gunit2}
Let $f\in k[x,y]$ be a polynomial of degree $d$ such that $f(0,0)=c_0\neq 0$ and $x^d$ and $y^d$ have nonzero coefficients $c_x$ and $c_y$ in $f$, respectively.  Let $S$ be a set of places of $k$ containing the archimedean places with $|S|\leq 3$.  Then the set of solutions to
\begin{equation*}
f(u,v)=w, \quad u,v,w\in \O_{k,S}^*,
\end{equation*}
consists of a finite effectively computable set and a finite number of infinite families of solutions where one of $u,v,$ or $w$ is constant.  Let
\begin{align*}
T_1&=\{a\in \O_{k,S}^*: (x-a)|(f(x,y)-c_yy^d), c_y\in \O_{k,S}^*\},\\
T_2&=\{a\in \O_{k,S}^*: (y-a)|(f(x,y)-c_xx^d), c_x\in \O_{k,S}^*\},\\
T_3&=\{a\in \O_{k,S}^*: (y-ax)|(f(x,y)-c_0), c_0\in \O_{k,S}^*\}.
\end{align*}
Then the infinite families of solutions are
\begin{alignat*}{2}
(u,v,w)&=(a,t,c_yt^d), \quad &t\in \O_{k,S}^*, \quad\text{ for each } a\in T_1,\\
(u,v,w)&=(t,a,c_xt^d),  &t\in \O_{k,S}^*,\quad\text{ for each } a\in T_2,\\
(u,v,w)&=(t,at,c_0), &t\in \O_{k,S}^*, \quad\text{ for each }a\in T_3.
\end{alignat*}
\end{corollary}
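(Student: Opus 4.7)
The plan is to realize a solution $(u,v,w)$ as an $S$-integral point on $X=\mathbb{P}^2$ with respect to four ample divisors, apply Corollary~\ref{cos}, and then use Bilu's theorem (Theorem~\ref{BT}) to analyze each one-dimensional component of the resulting exceptional set, identifying the infinite families with the three types in the statement.

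Let $F\in k[x,y,z]$ denote the degree-$d$ homogenization of $f$ and set $D_1=\{x=0\}$, $D_2=\{y=0\}$, $D_3=\{z=0\}$, $D_4=\{F=0\}$, all ample, and $D=\sum_{i=1}^4 D_i$. A solution $(u,v,w)\in(\O_{k,S}^*)^3$ of $f(u,v)=w$ corresponds to the point $[u{:}v{:}1]\in\mathbb{P}^2\setminus D$, which is $S$-integral because $u$, $v$, and $F(u,v,1)=f(u,v)=w$ are $S$-units. The hypotheses $c_0,c_x,c_y\neq 0$ say exactly that $F$ does not vanish at the three coordinate vertices $[0{:}0{:}1]$, $[0{:}1{:}0]$, $[1{:}0{:}0]$, which kills every triple intersection among $D_1,\ldots,D_4$, and no $D_i$ contains a component of another for the same reason. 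Since $\Pic(\mathbb{P}^2)\cong\mathbb{Z}$ has rank $r=1$, so $n-r=3$, and $|S|\leq 3<n=4$, Corollary~\ref{cos} applies and (*) is easily verified for $\mathbb{P}^2$ with hypersurface divisors.

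Corollary~\ref{cos} then places the integral points in the union of a finite effective set and the Zariski closure $Z'$ of $\bigcup_{P\in T}\bigcap_{\phi\in\Phi_P}\{\phi(Q)=\phi(P)\}$, where $T=\bigcup_{i\neq j}(D_i\cap D_j)$. Because $\Pic(\mathbb{P}^2)\cong\mathbb{Z}$, each $\phi\in\Phi_P$ is, up to a scalar, a product of integer powers of $x,y,z,F$ whose total weighted degree vanishes, and the condition $P\notin\Supp\phi$ forces the exponents of those among $x,y,z,F$ that vanish at $P$ to be zero. A direct calculation shows that the three coordinate vertices $[0{:}0{:}1]$, $[0{:}1{:}0]$, $[1{:}0{:}0]$ contribute the master curves $\{f=c_0\}$, $\{f=c_yy^d\}$, $\{f=c_xx^d\}$ respectively in the affine chart $z=1$, while each other point of $D_4\cap D_i$ for $i=1,2,3$ contributes a single affine line of the form $\{x=\text{const}\}$, $\{y=\text{const}\}$, or $\{y=(\text{const})x\}$.

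Decompose $Z'$ into irreducible components and work on each one inside the open affine surface $\mathbb{G}_m^2\setminus\{f=0\}$. On a component $C$, an integral point $(u,v)$ has $u$, $v$, and $f(u,v)$ all $S$-units, hence these are everywhere nonvanishing regular functions on $C$. If any two of them are multiplicatively independent modulo $k^*$, Bilu's theorem produces an effective finite set of integral points on $C$, contributing to the finite exceptional set in the statement. Otherwise, multiplicative dependence combined with the defining equation of $C$ forces $C$ to be a line of exactly one of the three shapes $\{x=a\}$, $\{y=a\}$, $\{y=ax\}$, giving respectively $a\in T_1,T_2,T_3$; conversely, each $a$ in these sets yields its listed infinite family directly, with the $S$-unit conditions on $c_y, c_x, c_0$ built into the definitions of the $T_i$ ensuring that the third coordinate stays in $\O_{k,S}^*$ for all parameter values. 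The main obstacle is this last implication: verifying that on each master curve a multiplicative dependence really does force a line of the listed shape. For instance on a component of $\{f=c_0\}$ with $u^av^b\in k^*$, a parametrization along the dependency together with comparison of the top-degree monomials $c_xx^d$ and $c_yy^d$ of $f$ (where $c_x,c_y\neq 0$ is essential) forces the exponents to be of the form $(r,-r)$ and then the component to be a line through the origin $y=\lambda x$ with $(y-\lambda x)\mid(f(x,y)-c_0)$, i.e., $\lambda\in T_3$; analogous arguments handle $\{f=c_yy^d\}$, $\{f=c_xx^d\}$, and the three families of affine lines.
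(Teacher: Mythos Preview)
Your proposal is correct and follows essentially the same approach as the paper: set up the four divisors on $\mathbb{P}^2$, apply the surface corollary (the paper uses Corollary~\ref{surfcor}, whose explicit description of $Z$ via $\Phi_P$ is what you actually invoke, rather than Corollary~\ref{cos}), identify the exceptional components as the three ``master curves'' plus the pencils of lines, and then run Bilu's theorem on each component, reducing the multiplicatively dependent case to the three families of lines. The paper adds the routine step of disposing of components $C$ not defined over $k$ via Galois conjugates, and organizes the dependent case slightly differently---first classifying $C$ as a monomial curve $x^my^{n-m}=az^n$ (etc.) from the dependence of $x/z,\,y/z$, then using the nonvanishing of $c_x,c_y,c_0$ to force $n=1$---but this is the same mechanism you sketch in your final paragraph.
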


\begin{proof}
It will be convenient to work with the homogenized polynomial 
\begin{equation*}
F(x,y,z)=z^df(x/z,y/z).  
\end{equation*}
Consider $\mathbb{P}^2$ with homogeneous coordinates $(x,y,z)$ and let $D_1,D_2,D_3,D_4$ be the curves defined by $x=0$, $y=0$, $z=0$, and $F(x,y,z)=0$, respectively.  Let $D=\sum_{i=1}^4D_i$.  Let 
\begin{equation*}
R=\{(u,v,1)\in \mathbb{P}^2(k): u,v,f(u,v)\in \O_{k,S}^*\}.
\end{equation*}
Then $R\subset (\mathbb{P}^2\setminus D)(\O_{k,S})$.  Let $\{i,j,k\}=\{1,2,3\}$ and $P\in (D_i\cap D_4)(\kbar)$.  Then, using the notation of Corollary \ref{surfcor}, the Zariski closure of $\cap_{\phi\in \Phi_P}\{Q\in X(\kbar): \phi(Q)=\phi(P)\}$ is a line through $P$ and the unique point of $D_j\cap D_k$.  Now let $P_1=(1,0,0), P_2=(0,1,0)$, and $P_3=(0,0,1)$, so that $\{P_i\}=\cap_{j\in \{1,2,3\}\setminus \{i\}}D_j(\kbar)$.  Let $Z_i$ be the Zariski closure of
\begin{equation*}
\cap_{\phi\in \Phi_{P_i}}\{Q\in X(\kbar): \phi(Q)=\phi(P_i)\}
\end{equation*}
for $i=1,2,3$.
Since $\frac{F(x,y,z)}{x^d}(P_1)=c_x$, $\frac{F(x,y,z)}{y^d}(P_2)=c_y$, and $\frac{F(x,y,z)}{z^d}(P_3)=c_0$,  it follows that we have the equations $Z_1:F(x,y,z)=c_xx^d, Z_2:F(x,y,z)=c_yy^d,$ and $Z_3:F(x,y,z)=c_0z^d$.  Let $Z$ be the closed subset of $\mathbb{P}^2$ consisting of all lines connecting points of $(D_i\cap D_j)(\kbar)$ with points of $(D_k\cap D_l)(\kbar)$, where $\{i,j,k,l\}=\{1,2,3,4\}$, together with the closed subsets $Z_1$, $Z_2$, and $Z_3$.  Then it follows from Corollary \ref{surfcor} that $R\setminus Z$ consists of a finite effectively computable set of points (in fact, an explicit height bound for points in this set follows from Theorem \ref{thP2}).  

Now let $C$ be a geometrically irreducible curve in $Z$.  If $C$ is not defined over $k$ and $C'$ is any nontrivial conjugate of $C$ over $k$, then $C(k)\subset (C\cap C')(\kbar)$, a finite effectively computable set.  In particular, $R\cap C$ is finite and effectively computable.  Assume now that $C$ is defined over $k$.  Then $R\cap C$ is a set of integral points on $C\setminus (C\cap D)$.  Consider the rational functions on $C$ given by $\phi_1=\frac{x}{z}|_C$ and $\phi_2=\frac{y}{z}|_C$.  The functions $\phi_1$ and $\phi_2$ have zeros and poles only in $C\cap D$.  If $\phi_1$ and $\phi_2$ are multiplicatively independent modulo $k^*$, then Bilu's Theorem \ref{BT} implies that $R\cap C$ is finite and effectively computable.  Suppose now that this is not the case.  Then this easily implies that $C$ is given by an equation $x^my^{n-m}=az^n$, $x^mz^{n-m}=ay^n$, or $y^mz^{n-m}=ax^n$ for some nonnegative integers $m$ and $n$ and $a\in k^*$.  Suppose first that $n\geq 2$.  Then $C$ is a component of $Z_1$, $Z_2$, or $Z_3$.  Suppose that, say, $C$ is given by $x^my^{n-m}=az^n$ and is a component of $Z_1$.  Then $F(x,y,z)=c_xx^d+(x^my^{n-m}-az^n)g(x,y,z)$ for some homogeneous polynomial $g(x,y,z)\in k[x,y,z]$.  Since $C$ is geometrically irreducible and $n\geq 2$, we must have $0<m<n$.  But from the form of $F(x,y,z)$ we then see that $y^d$ cannot have a nonzero coefficient in $F(x,y,z)$, contradicting our assumptions.  The other possible cases are similar and we conclude that $n=1$.  So $C$ is defined by a linear form $x-az$, $y-az$, or $y-ax$, for some $a\in k^*$.  

Suppose that $C$ is defined by $x-az=0$.  If $R\cap C\neq \emptyset$ then $a\in \O_{k,S}^*$, which we now assume.  Since $y^d$ must have a nonzero coefficient in $F(x,y,z)$, it follows that $C$ cannot be an irreducible component of $Z_1$ or $Z_3$.  If $C$ is an irreducible component of $Z_2$, then $f(x,y)=c_yy^d+(x-a)g(x,y)$ for some polynomial $g(x,y)\in k[x,y]$.  If $C$ connects a point of $D_i\cap D_j$ with a point of $D_k\cap D_l$, where $\{i,j,k,l\}=\{1,2,3,4\}$, then it must be that $C$ connects the unique point of $D_1\cap D_3$ with a point of $D_2\cap D_4$.  If $C$ intersects $D$ in more than two points over $\kbar$, then it follows easily again from Theorem \ref{BT} that $R\cap C$ is finite and effectively computable.  So suppose that $|(C\cap D)(\kbar)|\leq 2$, in which case $|(C\cap D)(\kbar)|=2$.  Then the fact that $C\cap D_4$ consists of a single point contained in $D_2$ implies that $f(x,y)=c_yy^d+(x-a)g(x,y)$ for some polynomial $g(x,y)\in k[x,y]$.  So in any case, $f(x,y)=c_yy^d+(x-a)g(x,y)$ for some polynomial $g(x,y)\in k[x,y]$.  Now $C\cap R\neq \emptyset$ implies that $c_y\in \O_{k,S}^*$, and in this case we find that $C\cap R=\{(a,t,1): t\in \O_{k,S}^*\}$, leading to the infinite family of solutions $(u,v,w)=(a,t,c_yt^d)$, where $t\in \O_{k,S}^*$.

The cases where $C$ is defined by $y-az$ or $y-ax$ are similar, and we are led to the classification of the infinite families in the theorem.
\end{proof}

\section{Comparison with Runge's method}
\label{sRun}

An old method of Runge \cite{Run} yields effective finiteness for the set of integral points on certain curves.  In its most basic form, Runge proved:

\begin{theorem}[Runge]
\label{Run1}
Let $f\in \mathbb{Q}[x,y]$ be an absolutely irreducible polynomial of total degree $n$.  Let $f_0$ denote the leading form of $f$, i.e., the sum of the terms of total degree $n$ in $f$.  Suppose that $f_0$ factors as $f_0=g_0h_0$, where $g_0,h_0\in \mathbb{Q}[x,y]$ are nonconstant relatively prime polynomials.  Then the set of solutions to 
\begin{equation*}
f(x,y)=0, \quad x,y\in\mathbb{Z},
\end{equation*}
is finite and effectively computable.
\end{theorem}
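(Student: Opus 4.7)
The plan is to follow Runge's original Puiseux-series approach. The crucial structural fact is that the hypothesis $f_0=g_0h_0$ with $g_0,h_0\in\mathbb{Q}[x,y]$ nonconstant and coprime splits the set of points at infinity of the projective closure of $C\colon f(x,y)=0$ in $\mathbb{P}^2$ into two nonempty disjoint subsets $\Sigma_g,\Sigma_h$ (zero loci of $g_0$ and $h_0$ on the line at infinity), each stable under $\mathrm{Gal}(\Qbar/\mathbb{Q})$.

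Next, I would expand $y$ at $x=\infty$. There are $n$ Puiseux series $\phi_1(x),\dots,\phi_n(x)\in\Qbar((x^{-1/e}))$ (for a suitable common ramification $e$) solving $f(x,\phi_i(x))\equiv 0$, and their leading directions partition them into two Galois-stable sets $R_g,R_h$ lying over $\Sigma_g,\Sigma_h$. Using the coprimality of $g_0$ and $h_0$, Hensel's lemma over $\mathbb{Q}((x^{-1}))$ yields a factorization
\[
f(x,y)=c(x)\cdot F_g(x,y)\cdot F_h(x,y),
\]
with $F_g,F_h\in\mathbb{Q}((x^{-1}))[y]$ monic in $y$, their $y$-roots being $R_g$ and $R_h$ respectively.

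Now decompose each coefficient of $F_g$, viewed as a Laurent series in $x^{-1}$, into its polynomial-in-$x$ part plus a tail in $x^{-1}\mathbb{Q}[[x^{-1}]]$. This produces $F_g=\widetilde{F}_g+T_g$ with $\widetilde{F}_g\in\mathbb{Q}[x,y]$ and $T_g$ a polynomial in $y$ of degree $<|R_g|$ whose $x$-coefficients tend to $0$; analogously $F_h=\widetilde{F}_h+T_h$. For an integer point $(x,y)$ with $|x|$ sufficiently large, Puiseux convergence forces $y$ close to exactly one $\phi_i$, hence $y=O(|x|^K)$ for a fixed $K$. If $i\in R_g$, then $F_g(x,y)$ is vanishingly small, so $\widetilde{F}_g(x,y)=F_g(x,y)-T_g(x,y)$ is also arbitrarily small once one keeps enough Puiseux terms in the truncation; but after clearing a fixed common denominator, $\widetilde{F}_g(x,y)$ is a rational integer, forcing it to vanish. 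Thus $(x,y)$ lies on $\{\widetilde{F}_g=0\}$, a curve of total degree $<n$; the case $i\in R_h$ is symmetric. Since $f$ is absolutely irreducible of degree $n$, it cannot divide $\widetilde{F}_g$ or $\widetilde{F}_h$, so each intersection $\{f=0\}\cap\{\widetilde{F}_\ast=0\}$ is finite and effectively computable by resultants; together with the explicitly bounded remaining solutions this gives effective finiteness.

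The main obstacle is the quantitative truncation step: one must keep enough Puiseux coefficients that the resulting upper bound on $|T_g(x,y)|$, a polynomial of degree $<|R_g|$ in $y$ evaluated at $y=O(|x|^K)$ with small $x$-coefficients, is strictly less than the reciprocal of the fixed denominator of $\widetilde{F}_g$, forcing the integer $N\widetilde{F}_g(x,y)$ to vanish. Minor technicalities include handling ramified branches and branches where $y$ grows faster than $x$, both of which require working uniformly in $x^{1/e}$ and verifying that fractional powers cancel after summation over Galois orbits; this is standard Puiseux bookkeeping. Effectivity is preserved since Puiseux coefficients, their radii of convergence, and resultants are all effectively computable.
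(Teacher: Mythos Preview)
The paper does not give a proof of this theorem; it is quoted as a classical result of Runge and cited to the original source, so there is no ``paper's proof'' to compare against. Your outline follows Runge's original Puiseux-expansion strategy, and the overall architecture (split the branches at infinity using $f_0=g_0h_0$, build an auxiliary polynomial vanishing along one packet of branches, force an integer to be zero) is correct.

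There is, however, a genuine gap in the truncation step. Write $F_g=\prod_{i\in R_g}(y-\phi_i(x))$, monic of $y$-degree $r:=|R_g|$, with coefficients $a_j(x)\in\mathbb{Q}((x^{-1}))$. If $(x,y)$ lies on a branch $i_0\in R_g$ then $F_g(x,y)=0$, so $\widetilde{F}_g(x,y)=-T_g(x,y)$. Taking $\widetilde{F}_g$ to be the polynomial-in-$x$ part, the tail $a_j-\tilde a_j$ is $O(x^{-1})$ while $y=\phi_{i_0}(x)=O(x)$, so
\[
T_g(x,y)=\sum_{j=0}^{r-1}(a_j(x)-\tilde a_j(x))\,y^j=O(x^{r-2}),
\]
which tends to $0$ only when $r=1$. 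For $r\ge 3$ it blows up, and for $r=2$ it is merely bounded; in neither case can you conclude that the integer $D\,\widetilde{F}_g(x,y)$ vanishes. Your proposed remedy, ``keep enough Puiseux terms,'' does not help: truncating $a_j$ at level $x^{-N}$ makes the tail $O(x^{-N-1})$ and hence $T_g=O(x^{r-2-N})$, but now $\widetilde{F}_g$ lies in $\mathbb{Q}[x,x^{-1},y]$ and the denominator of $\widetilde{F}_g(x,y)$ involves $x^{Nr}$, so the integrality argument collapses. (Equivalently, $x^{Nr}\widetilde{F}_g$ is a genuine polynomial but its value is $O(x^{(N+1)(r-1)-1})$, again not small.)

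The standard fix is not a truncation of $F_g$ at all, but a parameter count: one seeks $G\in\mathbb{Q}[x,y]$ with $\deg_y G<\deg_y f$ and $\deg_x G\le D$ such that $G(x,\phi_i(x))=O(x^{-1})$ for every $i\in R_g$. This imposes roughly $r(D+\mathrm{const})$ linear conditions on roughly $(\deg_y f)(D+1)$ unknown coefficients, and since $\deg_y f - r = |R_h|>0$ a nontrivial $G$ exists for $D$ large. Then $G$ is a polynomial with \emph{fixed} integer denominator, $G(x,y)=O(x^{-1})$ on the $R_g$ branches, and $f\nmid G$ because $\deg_y G<\deg_y f$; the rest of your argument goes through. (One must also, as you note, first make a rational linear change of coordinates so that $y^n$ occurs in $f$, ensuring all branches satisfy $y=O(x)$.)
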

We will state a general higher-dimensional version of Runge's method from \cite{LevRun} (see \cite{Bom} for earlier work on curves).  Before stating a higher-dimensional version, we give some definitions which allow for varying sets of places and number fields.  It will be more convenient here to use a definition of integrality involving regular functions.  Let $V$ be a variety (not necessarily projective or affine) defined over a number field $k$.  Let $s$ be a positive integer.  We call a set  $R\subset V(\kbar)$ a set of $s$-integral points on $V$ if for every point $P\in R$ there exists a set of places $S_P$ of $k(P)$, containing the archimedean places of $k(P)$, such that $|S_P|\leq s$ and for every regular function $\phi\in \kbar(V)$ on $V$ there exists a nonzero constant $c_\phi\in k^*$, independent of $P$, such that $|c_\phi\phi(P)|_v\leq 1$ for all places $v$ of $k(P)$ not in $S_P$ (extending each place $v$ of $k(P)$ to $\kbar$ in some fixed way).  With these definitions, a higher-dimensional version of Runge's theorem is the following.

\begin{theorem}
\label{gR}
Let $X$ be a nonsingular projective variety defined over a number field $k$.  Let $D=\sum_{i=1}^rD_i$ be a divisor on $X$, with $D_1,\ldots, D_r$ effective divisors defined over $k$.  Suppose that the intersection of any $m+1$ of the supports of the divisors $D_i$ is empty.  Let $s$ be a positive integer satisfying
\begin{equation*}
ms<r.
\end{equation*}
Let $R$ be a set of $s$-integral points on $X\setminus D$.  Suppose that for every regular function $\phi\in \kbar(X)$ on $X\setminus D$, the constant $c_\phi$ in the definition of $s$-integral is effectively computable with respect to $R$.  Suppose also that one can effectively compute a basis of $L(nD_i)$ for all $n>0$ and all $i$.  Then the following statements hold.
\renewcommand{\theenumi}{(\alph{enumi})}
\renewcommand{\labelenumi}{\theenumi}
\begin{enumerate}
\item  If $\kappa(D_i)>0$ for all $i$, then $R$ is contained in an effectively computable proper Zariski closed subset $Z\subset X$.\label{R1}
\item If $D_i$ is big for all $i$, then there exists an effectively computable proper Zariski closed subset $Z\subset X$, independent of $R$, such that the set $R\setminus Z$ is finite (and effectively computable).\label{R2}
\item If $D_i$ is ample for all $i$, then $R$ is finite and effectively computable.\label{R3}
\end{enumerate}
\end{theorem}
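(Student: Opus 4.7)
The approach is a pigeonhole argument on local heights, applied one point at a time, followed by case analysis according to the positivity assumption on the $D_i$. Fix $P\in R$ with its associated place set $S_P$, $|S_P|\leq s$. The $s$-integrality hypothesis applied to local defining sections of each $D_i$ (which are regular on $X\setminus D$) gives $h_{D_i,v}(P)\leq C_1$ for all $v\notin S_P$ and all $i$, for an effective constant $C_1$. The hypothesis that $\bigcap_{i\in I}\Supp D_i=\emptyset$ whenever $|I|=m+1$ lets me invoke the formal-property consequence recorded in the first case of Lemma~\ref{L5}: $\min_{i\in I}h_{D_i,v}(P)\leq C_2$ for every such $I$, every place $v$, and an effective constant $C_2$. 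Consequently, at each single place $v$ the local height $h_{D_i,v}(P)$ exceeds $C_2$ for at most $m$ indices $i$, so across all $v\in S_P$ at most $m|S_P|\leq ms<r$ indices can be ``large'' somewhere. Pigeonhole then produces $i=i(P)$ with $h_{D_i,v}(P)\leq C_2$ for every $v\in S_P$; summing over all places and using the integrality bound at $v\notin S_P$ yields $h_{D_{i(P)}}(P)\leq C$ for an effective constant $C$ independent of $P$.

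Next I would partition $R=R_1\cup\cdots\cup R_r$ according to the value $i(P)$ and handle each piece using the positivity of $D_{i(P)}$. For part (c), ampleness of $D_i$ and the Northcott property (Lemma~\ref{L1}) immediately make each $R_i$ finite and effectively computable. For part (a), since $\kappa(D_i)>0$ there exists, and by hypothesis can be effectively computed from a basis of $L(nD_i)$, a nonconstant $\phi_i\in k(X)$ with poles only on $\Supp D_i$; then $h(\phi_i(P))\leq n\,h_{D_i}(P)+O(1)$ is bounded, so $\phi_i(P)$ lies in a finite effective subset of $\mathbb{P}^1$, and $R_i$ is contained in the finite union of the corresponding fibers of $\phi_i$, a proper Zariski closed subset $Z_i$; take $Z=\bigcup_i Z_i$. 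For part (b), bigness of $D_i$ furnishes, for sufficiently large $n$, a basis of $L(nD_i)$ defining a birational map $\varphi_i:X\dashrightarrow\mathbb{P}^{N_i}$; the bound on $h_{D_i}(P)$ forces $h(\varphi_i(P))$ bounded, confining $\varphi_i(P)$ to a finite effective subset, hence $P$ to a finite effective set outside the base locus of $|nD_i|$ together with the exceptional locus where $\varphi_i$ fails to be birational onto its image. Adding these exceptional loci to $Z$ yields part (b).

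The main obstacle is the first step: making the pigeonhole argument both effective and uniform in $P$, given that $R$ lies in $V(\kbar)$ rather than $V(k)$. One must perform the local-height comparisons after base change to $k(P)$, carefully balance the normalizations of $|\cdot|_v$ under extensions of scalars, and leverage the hypothesized uniformity of the constants $c_\phi$ in the definition of $s$-integrality so that the resulting bound $C$ truly does not depend on the particular $P\in R$. Once this uniform bound is in hand, the translation from $h_{D_{i(P)}}(P)\leq C$ to an explicit proper Zariski closed subset containing $R$ (and to a finite effectively computable set in cases (b) and (c)) is routine, given the assumed effective computability of bases of $L(nD_i)$.
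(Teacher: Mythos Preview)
The paper does not actually prove Theorem~\ref{gR}; it is quoted from \cite{LevRun} as a point of comparison with the paper's own results (Section~\ref{sRun}). So there is no proof in this paper to compare your proposal against.

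That said, your outline is the standard Runge-type argument and is essentially correct. One point you should make explicit, since it resolves the very obstacle you flag at the end: because $S_P$ must contain all archimedean places of $k(P)$ and $|S_P|\leq s$, one has $[k(P):\mathbb{Q}]\leq 2s$. This uniform degree bound is exactly what makes your appeals to Northcott in part~(c), and to ``$\phi_i(P)$ lies in a finite effective subset of $\mathbb{P}^1$'' in part~(a), legitimate over $\kbar$; without it those steps would fail. With that observation in hand, the height normalizations under base change cause no trouble (absolute heights are field-independent), and the uniformity of the constants $c_\phi$ in the definition of $s$-integrality gives the uniform bound $C$ you need.
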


We now briefly discuss some of the advantages and disadvantages of the higher-dimensional Runge method as compared to our results here.  To begin, in some respects the conditions on the divisors $D_i$ in Theorem \ref{gR} are weaker than the conditions required in Theorem \ref{mtheorem}.  The divisors in Theorem~\ref{gR} are not required to be ample or big (though one still needs $\kappa(D_i)>0$) and furthermore there is no linear equivalence condition present in Theorem \ref{gR}.  On the other hand, for the necessary rational functions $\phi$ to exist in Theorem \ref{mtheorem}, it is necessary that the subgroup of the Picard group generated by the divisors $D_i$ not be too large (this condition is more explicitly present in Corollary \ref{mcorollary}).  Another advantage of Theorem \ref{gR} is that the result is uniform in $|S|$, giving degeneracy of integral points even as $S$ and $k$ vary subject to an appropriate inequality.  This is also, however, a limitation of Theorem~\ref{gR}, as many results are simply not true in this generality (e.g., the unit equation $u+v=1$ likely has infinitely many solutions in rational $S$-units, $|S|\leq 3$, since for instance there are expected to be infinitely many Mersenne primes).  It is not apparent from the statement of Theorem \ref{gR}, but when Runge's method applies it also gives much smaller bounds than techniques coming from Baker's theorem.

As compared to Theorem \ref{gR}, we note that the intersection condition on the divisors $D_i$ in Theorem \ref{mtheorem} is much weaker, especially on surfaces.  For instance, the intersection condition on the divisors in Corollary \ref{surfcor} allows for highly degenerate configurations of the divisors $D_i$.  Finally, we note that even in cases where the divisors $D_i$ are in general position, the crucial inequality involving $|S|$ in Theorem~\ref{mtheorem} is superior to the inequality in Theorem~\ref{gR}.  This is particularly notable in the case of surfaces, where the superior inequality on $|S|$ is crucial, for instance, in proving Corollary \ref{gunit}.

\section{Effective inequalities}

In preparation for the next section, we recall here and prove several needed inequalities.

\subsection{Linear forms in logarithms}

The deepest effective result we need is from the theory of linear forms in logarithms.  We give a statement in terms of local heights, based on an inequality of  B\'erczes, Evertse, and Gy\H{o}ry \cite{BEG}.

\begin{theorem}
\label{Baker}
Let $k$ be a number field of degree $\delta$ and let $G$ be a finitely generated multiplicative subgroup of $k^*$ of rank $t>0$.  Let $\alpha\in k^*$ and $v\in M_{k}$.  Let $0<\epsilon<1$.  Then if $x\in G$, $x\neq \alpha$, we have
\begin{equation*}
h_{\alpha,v}(x)\leq \epsilon h(x)+c_1(\epsilon, k,G,v,\alpha)+\log 2,
\end{equation*}
where
\begin{align*}
c_1(\epsilon, k,G,v,\alpha)&=6.4\frac{c_2(\delta,t)N(v)}{\epsilon \log N(v)}Q_G\max\{h(\alpha),1\}\max \left\{\log\frac{c_2(\delta,t)N(v)}{\epsilon},\log^* Q_G\right\},\\
c_2(\delta,t)&=36(16e\delta)^{3t+5}(\log^* \delta)^2.
\end{align*}
\end{theorem}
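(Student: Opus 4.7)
The plan is to reduce Theorem \ref{Baker} to the fundamental explicit lower bound for linear forms in logarithms due to Matveev, in the form packaged for finitely generated multiplicative groups by B\'erczes--Evertse--Gy\H{o}ry. First I would unwind the definition of the local height $h_{\alpha,v}$: up to an additive term bounded by $h_v(\alpha)+h_v(x)$, this quantity is essentially $-\log\|x-\alpha\|_v$. Thus the task reduces to producing an effective lower bound for $\|x-\alpha\|_v$ that grows at most logarithmically in $h(x)$, so that the resulting term can be absorbed into $\epsilon h(x)$ at the cost of a suitable additive constant.

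Next I would parametrize $G$. Choose generators $\gamma_1,\ldots,\gamma_t$ for $G$ modulo torsion and write any $x\in G$ uniquely as $x=\zeta\gamma_1^{a_1}\cdots\gamma_t^{a_t}$ for $\zeta$ a root of unity and integers $a_i$. A standard height comparison using the regulator-type data of $G$, which is what gets repackaged as the invariant $Q_G$ appearing in the statement, yields a bound
\begin{equation*}
\max_i|a_i|\leq c(k,G)\cdot h(x),
\end{equation*}
with $c(k,G)$ effectively computable from the chosen basis. Then
\begin{equation*}
\|x-\alpha\|_v=\|x\|_v\cdot \|1-\alpha x^{-1}\|_v=\|x\|_v\cdot \|1-\zeta^{-1}\alpha\gamma_1^{-a_1}\cdots\gamma_t^{-a_t}\|_v
\end{equation*}
exposes the quantity $\|1-\Lambda\|_v$ with $\Lambda=\zeta^{-1}\alpha\prod_i\gamma_i^{-a_i}$, and this is exactly the object bounded by linear-forms-in-logarithms estimates, uniformly across the archimedean and non-archimedean cases through the factor $N(v)/\log N(v)$.

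I would then invoke the explicit Matveev-style bound, in the form provided by \cite{BEG}, which yields an inequality of the shape
\begin{equation*}
-\log\|1-\Lambda\|_v\leq c_2(\delta,t)\cdot\frac{N(v)}{\log N(v)}\cdot \max\{h(\alpha),1\}\cdot Q_G\cdot \log^{*}\!\Bigl(\max_i|a_i|\Bigr),
\end{equation*}
with the explicit constant $c_2(\delta,t)=36(16e\delta)^{3t+5}(\log^*\delta)^2$ recorded in the statement. Substituting $\max_i|a_i|\leq c(k,G)h(x)$ produces a term of the shape $C\log^{*}\!h(x)$ on the right-hand side, and a standard dichotomy argument --- either $h(x)$ is below a fixed threshold (in which case the conclusion is automatic) or $\log^{*}\!h(x)\leq \epsilon h(x)/C$ --- yields the desired bound of the form $\epsilon h(x)+c_1(\epsilon,k,G,v,\alpha)+\log 2$. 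Combining with the reduction in the first paragraph completes the proof.

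The main obstacle, since the deep input is invoked as a black box, is purely the bookkeeping: one must compress the rank, regulator, and generator heights of $G$ into the single scalar $Q_G$, correctly track the factors of $\epsilon^{-1}$, $N(v)/\log N(v)$, $\max\{h(\alpha),1\}$, and $\log^{*}Q_G$ through the splitting step, and arrive at exactly the form of $c_1(\epsilon,k,G,v,\alpha)$ displayed in the theorem, including the numerical constant $6.4$. The initial height manipulation and the final comparison with $h(x)$ are routine; essentially all of the mathematical depth is packed into the cited Matveev--BEG inequality.
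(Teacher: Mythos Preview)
Your approach is correct and follows the same broad strategy as the paper: reduce $h_{\alpha,v}(x)$ to a lower bound on $\|x/\alpha-1\|_v$ (equivalently $\|1-\alpha/x\|_v$), then invoke the B\'erczes--Evertse--Gy\H{o}ry packaging of linear forms in logarithms. The difference is one of granularity. The paper cites \cite[Th.~4.2]{BEG} as a black box already stated in exactly the shape needed: that result says directly that if $\log\|x/\alpha-1\|_v<-\epsilon h(x)$ for $x\in G$, then $h(x)\leq c_1(\epsilon,k,G,v,\alpha)$, with precisely the constant displayed in the theorem (including the $6.4$, the factor $Q_G$, and the $\max\{\log(c_2 N(v)/\epsilon),\log^*Q_G\}$ term). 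The paper's proof is therefore just a short case split plus elementary height manipulations leading to that hypothesis, followed by the trivial bound $h_{\alpha,v}(x)\leq h(x)+\log 2$ once $h(x)$ is controlled. The bookkeeping you flag as the main obstacle---choosing generators, bounding the exponents $|a_i|$ by $h(x)$, and the dichotomy converting $C\log^*h(x)$ into $\epsilon h(x)$---is already absorbed into the cited result. Your proposal would effectively reprove \cite[Th.~4.2]{BEG} from a more primitive Matveev-type inequality, which is sound but unnecessary here and makes it harder to land on the exact numerical constant.
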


Here, if $G$ is a finitely generated multiplicative subgroup of $\Qbar$ of rank $t>0$, then we let $Q_G$ be the minimum value of
\begin{equation*}
h(u_1)\cdots h(u_{t}),
\end{equation*}
where $u_1,\ldots, u_t$ are generators for $G$ modulo the roots of unity in $G$.  We let $Q_S=Q_{\O_{k,S}^*}$.

\begin{proof}
Let $x\in G$, $x\neq \alpha$.  Suppose first that
\begin{equation*}
h_{\alpha,v}(x)\leq \epsilon h(x)+h_v(\alpha)+h_v(1/\alpha)+h_v(2).
\end{equation*}
Then, using that $h(\alpha)=h(1/\alpha)$, we have
\begin{equation*}
h_{\alpha,v}(x)\leq \epsilon h(x)+2h(\alpha)+\log 2\leq\epsilon h(x)+c_1(\epsilon, k,G,v,\alpha)+\log 2,
\end{equation*}
as $2h(\alpha)$ is easily bounded by $c_1(\epsilon, k,G,v,\alpha)$.
Suppose now that
\begin{equation*}
h_{\alpha,v}(x)> \epsilon h(x)+h_v(\alpha)+h_v(1/\alpha)+h_v(2).
\end{equation*}

We have
\begin{align*}
h_{\alpha,v}(x)&=\log \frac{\max\{\|\alpha\|_v,1\}\max\{\|x\|_v,1\}}{\|x-\alpha\|_v}\\
&=\log \frac{\max\{\|\alpha\|_v,1\}\max\{\|\frac{x}{\alpha}\|_v,\|\frac{1}{\alpha}\|_v\}}{\|\frac{x}{\alpha}-1\|_v}.
\end{align*}

Now
\begin{equation*}
\left\|\frac{x}{\alpha}\right\|_v=\left\|\left(\frac{x}{\alpha}-1\right)+1\right\|_v\leq \epsilon_v'(2)\max\left\{\left\|\frac{x}{\alpha}-1\right\|_v,1\right\}.
\end{equation*}
It follows that
\begin{align*}
h_{\alpha,v}(x)\leq \log \frac{\epsilon_v'(2)\max\{\|\alpha\|_v,1\}\max\{\left\|\frac{x}{\alpha}-1\right\|_v,\|\frac{1}{\alpha}\|_v,1\}}{\left\|\frac{x}{\alpha}-1\right\|_v}.
\end{align*}
If 
\begin{equation*}
\max\left\{\left\|\frac{x}{\alpha}-1\right\|_v,\left\|\frac{1}{\alpha}\right\|_v,1\right\}=\left\|\frac{x}{\alpha}-1\right\|_v,
\end{equation*}
then $h_{\alpha,v}(x)\leq \log \epsilon_v'(2)\max\{\|\alpha\|_v,1\}\leq h_v(\alpha)+h_v(2)$, contradicting our assumptions.  Then we must have
\begin{align*}
h_{\alpha,v}(x)&\leq \log \frac{\epsilon_v'(2)\max\{\|\alpha\|_v,1\}\max\{\|\frac{1}{\alpha}\|_v,1\}}{\left\|\frac{x}{\alpha}-1\right\|_v}\\
&\leq h_v(2)+h_v(\alpha)+h_v(1/\alpha)-\log \left\|\frac{x}{\alpha}-1\right\|_v.
\end{align*}

So
\begin{equation*}
\log \left\|\frac{x}{\alpha}-1\right\|_v<-\epsilon h(x). 
\end{equation*}
By \cite[Th.\ 4.2]{BEG}, this implies that
\begin{equation*}
h(x)\leq c_1(\epsilon, k,G,v,\alpha).
\end{equation*}
Now we note that by Lemma \ref{lhb}, proved later in this section, for any $x\in k$, $x\neq \alpha$,
\begin{equation*}
h_{\alpha,v}(x)\leq \log 2+\sum_{v\in M_k}h_{\alpha,v}(x)=\log 2+h(x).
\end{equation*}
Thus,
\begin{equation*}
h_{\alpha,v}(x)\leq \epsilon h(x)+c_1(\epsilon,k,G,v,\alpha)+\log 2.
\end{equation*}
\end{proof}

\subsection{Hilbert's Nullstellensatz}

We will need an effective version of Hilbert's Nullstellensatz.  We use the following version, due to Masser and W\"ustholz \cite{MW}.

\begin{theorem}[Effective Hilbert's Nullstellensatz]
Let $k$ be a number field and let $p_1,\ldots,p_m, q\in \O_k[x_1,\ldots, x_n]$ be polynomials of degree at most $d\geq 1$ such that $q$ vanishes at all common zeros of $p_1,\ldots, p_m$ in $\mathbb{A}^n(\kbar)$.  Then there exists a positive integer $M\leq (8d)^{2^n}$ and polynomials $a_1,\ldots, a_m\in \O_k[x_1,\ldots x_n]$ of degrees at most $(8d)^{2^n+1}$, such that
\begin{equation*}
aq^M=a_1p_1+\cdots+a_mp_m
\end{equation*}
for some nonzero element $a\in \O_k$.  Furthermore, if 
\begin{equation*}
h_\infty=\log \max_{\substack{v\in M_k\\ v|\infty}} \{|p_1|_v,\ldots, |p_m|_v, |q|_v\}, 
\end{equation*}
then 
\begin{equation*}
\log \max_{\substack{v\in M_k\\ v|\infty}} \{|a_1|_v,\ldots, |a_m|_v, |a|_v\}\leq (8d)^{2^{n+1}-1}(h_\infty+8d\log 8d).
\end{equation*}
\end{theorem}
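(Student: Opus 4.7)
The plan is to combine Rabinowitsch's trick, which reduces the Nullstellensatz to the ``no common zero'' case, with an effective arithmetic elimination theory, tracking heights throughout. First I would introduce an extra variable $x_0$ and consider the polynomials $p_1,\ldots,p_m,\, 1-x_0 q$ in $\Ok[x_0,x_1,\ldots,x_n]$. These polynomials have no common zero in $\mathbb{A}^{n+1}(\kbar)$: at any common zero of the $p_i$, we would have $q=0$ by hypothesis, yet the equation $1 = x_0 q$ forces $q\neq 0$. All of them have total degree at most $d+1$, so, up to a constant factor, degrees at most $d$ after a rescaling.

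Next I would apply an effective arithmetic Nullstellensatz for the ``no common zero'' case in $n+1$ variables, producing an identity
\begin{equation*}
a' = \sum_{i=1}^m b_i p_i + b_0(1-x_0 q)
\end{equation*}
in $\Ok[x_0,\ldots,x_n]$, with explicit degree and height bounds on the $b_i$ and on $a'\in\Ok\setminus\{0\}$. Substituting $x_0 = 1/q$ and clearing denominators in the standard way then yields $a' q^M = \sum_{i=1}^m a_i p_i$, where $M$ is bounded by the largest power of $x_0$ appearing in the $b_i$ (hence by their total degree). A careful bookkeeping on the degrees of the $b_i$, using the sharp bound $(8d)^{2^n}$ on the degrees in the no-common-zero case, gives the stated bounds on $M$ and on $\deg a_i$.

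The main obstacle is establishing the height bound $(8d)^{2^{n+1}-1}(h_\infty + 8d\log 8d)$. The plan here is an inductive elimination of variables: at each stage one uses a subresultant or $u$-resultant construction to eliminate one variable from the current system, and estimates the height of the resulting polynomial in terms of the heights of the inputs using an arithmetic form of B\'ezout's theorem (bounding the Mahler measure, or equivalently the height, of a product of polynomial evaluations). Iterating this elimination $n+1$ times accounts for the doubly-exponential tower $2^{n+1}$ in the exponent, while standard height inequalities for polynomial products, together with Liouville-style lower bounds at the common zeros, keep the dependence on $h_\infty$ affine-linear. The constant $8d$ absorbs the elementary degree inflations at each step (products, resultants, and expansions) and matches the shape of the Masser--W\"ustholz estimate. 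Alternatively, one could replace the elementary elimination argument by Philippon's arithmetic elimination theory, or by the Bost--Gillet--Soul\'e arithmetic intersection theory, to package the same height estimates in a more intrinsic but considerably heavier framework.
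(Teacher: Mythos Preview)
The paper does not prove this theorem at all: it is stated as a known result due to Masser and W\"ustholz and simply cited from \cite{MW}. There is therefore no ``paper's own proof'' to compare your attempt against.

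Your outline is a reasonable high-level sketch of how such effective Nullstellensatz results are obtained in the literature (Rabinowitsch's trick to reduce to the case of no common zeros, followed by an effective elimination with height control). However, as written it is not a proof but a plan: the entire substance of the theorem lies in the step you describe as ``apply an effective arithmetic Nullstellensatz for the no-common-zero case'' with the stated degree and height bounds, and in the ``inductive elimination of variables'' that you only gesture at. Getting the precise constants $(8d)^{2^n}$, $(8d)^{2^{n+1}-1}$, and the additive term $8d\log 8d$ requires the detailed resultant and height computations carried out in the Masser--W\"ustholz paper (or an alternative such as Philippon's elimination theory), and these are not reproducible from your sketch alone. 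If your goal is merely to indicate why the statement is plausible, the sketch is fine; if you intend an independent proof, you would need to either reproduce those computations in full or cite the relevant source, which is exactly what the paper does.
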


\begin{remark}
\label{RHN}
Applying the theorem appropriately to $\mathbb{A}^n$, it's clear that the same result holds for homogeneous polynomials $p_1,\ldots,p_m, q\in \O_k[x_1,\ldots, x_n]$ such that $q$ vanishes at all common zeros of $p_1,\ldots, p_m$ in $\mathbb{P}^{n-1}(\kbar)$.  Furthermore, in this case one can clearly choose $a_1,\ldots, a_m$ to be homogeneous polynomials with $\deg a_i=M\deg q-\deg p_i$.
\end{remark}

\subsection{Arithmetic Bezout}
\label{AB}
We will make use of the following arithmetic Bezout theorem for curves in $\mathbb{P}^2$, which is essentially a special case of a general arithmetic Bezout theorem of Philippon \cite{Phi}.

\begin{theorem}
\label{TB}
Let $C_1$ and $C_2$ be distinct curves in $\mathbb{P}^2$ over $\Qbar$.  Then
\begin{equation*}
\sum_{P\in (C_1\cap C_2)(\Qbar)} h(P)\leq (\deg C_1)h(C_2)+(\deg C_2)h(C_1)+4(\deg C_1)(\deg C_2).
\end{equation*}
\end{theorem}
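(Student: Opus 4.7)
The plan is to deduce Theorem~\ref{TB} directly from Philippon's general arithmetic Bezout theorem in \cite{Phi}, as the attribution in the statement suggests. Philippon's theorem asserts that for two effective pure-dimensional cycles $Z_1, Z_2$ in $\mathbb{P}^n$ that intersect properly, the cycle-theoretic intersection satisfies
\begin{equation*}
h(Z_1 \cdot Z_2) \le (\deg Z_1)\, h(Z_2) + (\deg Z_2)\, h(Z_1) + c(n)\, (\deg Z_1)(\deg Z_2),
\end{equation*}
where $h$ denotes Philippon's normalized (Mahler-style) height on cycles and $c(n)$ is an explicit constant depending only on the ambient dimension.

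First I would specialize to $n = 2$ and $Z_i = C_i$. Since $C_1$ and $C_2$ are distinct curves in $\mathbb{P}^2$, they share no common components and so intersect properly; the intersection product $C_1 \cdot C_2$ is a $0$-cycle $\sum_P m_P\, P$ supported on $(C_1 \cap C_2)(\Qbar)$ with all multiplicities $m_P \geq 1$, of total degree $(\deg C_1)(\deg C_2)$. For a $0$-cycle, Philippon's height equals $\sum_P m_P\, h(P)$ in terms of the absolute logarithmic height, so dropping the multiplicities (using $m_P \geq 1$) yields the left-hand side $\sum_{P} h(P)$ of Theorem~\ref{TB}. On the right-hand side, I would replace Philippon's Mahler-type height of the hypersurface $C_i$ by the naive polynomial height $h(C_i) = h(f_i)$ used in the paper; this is an upward comparison (sup-norm of coefficients bounds Mahler measure) up to a small combinatorial factor that can be absorbed into the additive term.

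The main obstacle is purely bookkeeping: one must carefully track Philippon's normalization conventions for both cycle heights (Chow form / Mahler measure versus naive polynomial height) and verify that his explicit $c(n)$ at $n=2$, together with the cost of switching to the naive height of the defining form, is bounded by $4(\deg C_1)(\deg C_2)$. No new ideas beyond Philippon's theorem are needed; the argument is a specialization of that result plus a comparison between height normalizations.
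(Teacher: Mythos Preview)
Your proposal is correct and is essentially the paper's own argument: specialize Philippon's arithmetic B\'ezout to two curves in $\mathbb{P}^2$, then compare his height normalization on cycles with the naive polynomial height used here. One minor discrepancy worth noting: in the version the paper cites (\cite[Prop.~4]{Phi}) there is \emph{no} additive term $c(n)(\deg C_1)(\deg C_2)$ in Philippon's inequality; the entire constant $4(\deg C_1)(\deg C_2)$ arises from the comparison $h_{\mathrm{Ph}}(C_i)\le h(C_i)+2\deg C_i$ (using $h_{\mathrm{Ph}}(C_i)=h_{\mathrm{Ph}}(f_i)+\tfrac{1}{2}\deg C_i$ together with the estimate $h_{\mathrm{Ph}}(f_i)\le h(f_i)+(\log 2+\tfrac{3}{4})\deg C_i$), and on the left one uses $h(P)\le h_{\mathrm{Ph}}(P)$ since Philippon's point height employs the $\ell^2$-norm at the archimedean places rather than the sup-norm.
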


\begin{proof}
We will denote the height used by Philippon in \cite{Phi} by $h_{\rm Ph}$.  By \cite[Prop. 4]{Phi},
\begin{equation*}
\sum_{P\in (C_1\cap C_2)(\Qbar)} h_{\rm Ph}(P)\leq (\deg C_1)h_{\rm Ph}(C_2)+(\deg C_2)h_{\rm Ph}(C_1).
\end{equation*}

From \cite[p. 347]{Phi}, for $i=1,2$ we have

\begin{equation*}
h_{\rm Ph}(C_i)=h_{\rm Ph}(f_i)+\frac{\deg C_i}{2}
\end{equation*}
and from the definitions of the heights, easy estimates give
\begin{equation*}
h_{\rm Ph}(f_i)\leq h(f_i)+\left(\log 2+\frac{3}{4}\right)\deg C_i=h(C_i)+\left(\log 2+\frac{3}{4}\right)\deg C_i.
\end{equation*}
So
\begin{equation*}
h_{\rm Ph}(C_i)\leq h(C_i)+2\deg C_i, \qquad i=1,2.
\end{equation*}
Finally, we note that if $P$ is a point in $\mathbb{P}^n$ then $h_{\rm Ph}(P)$ is the usual height $h(P)$ except that at the archimedean places one uses the $\ell^2$-norm.  In particular, $h(P)\leq h_{\rm Ph}(P)$.  Combining the above inequalities gives the result.
\end{proof}

\subsection{Units and regulators}



Let $k$ be a number field of degree $\delta$ and discriminant $\Delta$.  We will use the following bound on the product of the class number and the regulator, proven by Lenstra \cite[Th.~6.5]{Len}.
\begin{lemma}
Suppose that $k\neq \mathbb{Q}$.  Let $r_2$ denote the number of complex places of $k$ and let $C=\left(\frac{2}{\pi}\right)^{r_2}\sqrt{|\Delta|}$.    We have
\begin{align*}
h_kR_k\leq  \frac{C(\log C)^{\delta -1-r_2}(\delta-1+\log  C)^{r_2}}{(\delta-1)!}.
\end{align*}
\end{lemma}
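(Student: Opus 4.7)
The plan is to prove the bound via a geometry-of-numbers argument that simultaneously controls ideal-class representatives and fundamental units. Embed $k$ into $V = \mathbb{R}^{r_1}\times\mathbb{C}^{r_2}$ via the archimedean places, so that every nonzero fractional ideal $\mathfrak{a}$ becomes a lattice in $V$ of covolume $2^{-r_2}|\Delta|^{1/2}N(\mathfrak{a})$, and let $L:\mathcal{O}_k^*\to H\subset\mathbb{R}^{r_1+r_2}$ be the logarithmic embedding into the trace-zero hyperplane $H$, whose image is a lattice of covolume (up to the usual $\sqrt{r_1+r_2}$ normalization) equal to $R_k$.

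First I would use Minkowski's convex body theorem, applied not to the usual symmetric box but to a box weighted to produce the constant $C=(2/\pi)^{r_2}\sqrt{|\Delta|}$ (rather than the standard Minkowski constant with $(4/\pi)^{r_2}$), to show that every ideal class $[\mathfrak{a}]$ contains a representative $\mathfrak{b}$ with $N(\mathfrak{b})\leq C$ and with prescribed shape: namely the archimedean absolute values $|\beta|_v$ of some generator $\beta\in\mathfrak{a}^{-1}$ chosen inside a specific adelic box can be distributed arbitrarily subject to $\prod_v |\beta|_v^{[k_v:\mathbb{Q}_v]}=N(\mathfrak{b})/N(\mathfrak{a})\leq C/N(\mathfrak{a})$. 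Next I would bound $h_k$ by counting the number of such normalized representatives modulo the action of $\mathcal{O}_k^*$: two generators differ by a unit, so the set of admissible log-vectors $(\log\|\beta\|_v)_v\in\mathbb{R}^{r_1+r_2}$ is tiled by translates of a fundamental domain for $L(\mathcal{O}_k^*)$, whose relevant covolume is $R_k$. Thus $h_k\leq \operatorname{vol}(\Omega)/R_k$, where $\Omega\subset\mathbb{R}^{r_1+r_2}$ is the region cut out by $\sum_v [k_v:\mathbb{Q}_v]x_v \leq \log C$ together with the upper bounds $x_v\leq \log(\text{box size at }v)$ coming from Minkowski, projected onto $H$.

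The key computation — and what I expect to be the main obstacle — is evaluating $\operatorname{vol}(\Omega)$ so that the precise shape $\frac{C(\log C)^{\delta-1-r_2}(\delta-1+\log C)^{r_2}}{(\delta-1)!}$ emerges. The factor $1/(\delta-1)!$ and one power of $C$ come from integrating the characteristic function of a $(\delta-1)$-dimensional simplex in $H$ (exactly the combinatorial content of a Minkowski-type volume of a sum-bounded positive orthant). The asymmetric exponents $\delta-1-r_2$ and $r_2$ arise because real and complex places enter $H$ with different weights: at a real place $x_v$ ranges over a single logarithmic coordinate contributing $\log C$, while at a complex place the doubled weight $[k_v:\mathbb{Q}_v]=2$ forces the available range to look like $\delta-1+\log C$ after projecting the constraint $\sum [k_v:\mathbb{Q}_v]x_v \leq \log C$ onto the complex directions. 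Carefully tracking these two contributions through the simplex integral — and verifying the constants align to give $(\delta-1+\log C)^{r_2}(\log C)^{\delta-1-r_2}$ rather than some symmetric analogue — is the delicate combinatorial heart of the argument. Combining this volume estimate with $h_k\leq \operatorname{vol}(\Omega)/R_k$ then yields the stated inequality after rearrangement.
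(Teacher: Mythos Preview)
The paper does not prove this lemma at all; it is quoted verbatim from Lenstra \cite[Th.~6.5]{Len} and used as a black box. So there is no proof in the paper to compare against, only Lenstra's original.

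Your outline is in the right spirit---Lenstra's argument is indeed geometry of numbers, producing many algebraic integers in a suitably shaped region and arguing that they account for many distinct pairs (ideal class, unit mod torsion)---but as written it is a plan rather than a proof, and the plan has a structural gap. The step ``$h_k\leq \operatorname{vol}(\Omega)/R_k$'' is not how the bound is obtained: one does not tile a logarithmic region by a fundamental domain for the unit lattice and divide. Rather, Lenstra exhibits an explicit injection from a product set of size essentially $h_k\cdot(\text{unit count})$ into a set of lattice points in a box, and the regulator enters through a lower bound on the number of units of bounded height, so that $h_kR_k$ is bounded above by a lattice-point count (not a volume). Second, and you say this yourself, the computation that actually produces the shape $C(\log C)^{\delta-1-r_2}(\delta-1+\log C)^{r_2}/(\delta-1)!$ is the entire content of the statement, and you have not carried it out; your heuristic for why complex places contribute $\delta-1+\log C$ instead of $\log C$ (a ``projection'' effect from the weight $2$) does not match what happens in Lenstra's argument, where the asymmetry comes from the specific choice of box at real versus complex places. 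As it stands, the central step is asserted rather than proved.
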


Let $S$ be a finite set of places of $k$ containing the archimedean places.  Recall that $Q_S$ is the minimum value of
\begin{equation*}
h(u_1)\cdots h(u_{s-1}),
\end{equation*}
where $s=|S|$ and $u_1,\ldots, u_{s-1}$ are generators of $\O_{k,S}^*$ modulo roots of unity.  For the $S$-regulator and $Q_S$, Bugeuad and Gy\H{o}ry \cite[Lemmas 1 and 3]{BG2} gave the bounds
\begin{align*}
R_S&\leq h_kR_k\prod_{v\in S\setminus S_\infty}\log N(v),\\
Q_S&\leq \frac{((s-1)!)^2}{2^{s-2}\delta^{s-1}}R_S.
\end{align*}
More crudely, letting $s=|S|$ and $N=\max_{v\in S} N(v)$, we have the estimates
\begin{align*}
R_k\leq h_kR_k&\leq \frac{\sqrt{|\Delta|} (\frac{1}{2}\log |\Delta|)^{\delta -1-r_2}(\delta-1+\frac{1}{2}\log  |\Delta|)^{r_2}}{(\delta-1)!}\\
&\leq \frac{\sqrt{|\Delta|} (\delta-1+\frac{1}{2}\log  |\Delta|)^{\delta-1}}{(\delta-1)!}\leq \frac{\sqrt{|\Delta|}2^{\delta-1}\delta^{\delta-1}(\log^*|\Delta|)^{\delta-1}}{(\delta-1)!}\\
&\leq \delta^\delta\sqrt{|\Delta|}(\log^*|\Delta|)^{\delta-1}
\end{align*}
and
\begin{align}
R_S&\leq \delta^\delta(\log^* N)^{s-\delta/2} \sqrt{|\Delta|}(\log^*|\Delta|)^{\delta-1},\label{Rest}\\
Q_S &\leq 2^{2-s}s^{2s-4}\delta^{\delta-s+1}(\log^* N)^{s-\delta/2} \sqrt{|\Delta|}(\log^*|\Delta|)^{\delta-1}\label{Qest}.
\end{align}

\subsection{Points in projective space}
We first recall an inequality of Silverman \cite[Th.~2]{Sil2} relating the height of a point in projective space and the discriminant of its field of definition.

\begin{theorem}[Silverman]
\label{tSil}
Let $k$ be a number field of degree $\delta$ and discriminant $\Delta$.  Let $P\in \mathbb{P}^n(k)$.  Then
\begin{equation*}
\frac{\log |\Delta|}{\delta}\leq (2\delta-2)h(P)+\log \delta.
\end{equation*}
\end{theorem}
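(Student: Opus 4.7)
The plan is to extract from $P$ an algebraic generator $\alpha$ of $k/\mathbb{Q}$ whose height is controlled by $h(P)$, and then to bound $|\Delta|$ via the discriminant of the minimal polynomial of $\alpha$. First I would normalize the homogeneous coordinates so that $P=(1,x_1,\ldots,x_n)$ with $x_i\in k$, which forces $h(x_i)\le h(P)$ for each $i$. Assuming (as one may) that $k$ is the minimal field of definition of $P$, this gives $k=\mathbb{Q}(x_1,\ldots,x_n)$. By the primitive element theorem, a suitable $\mathbb{Z}$-linear combination $\alpha=\sum c_ix_i$ generates $k/\mathbb{Q}$; when one of the $x_i$ already generates $k$ we simply take $\alpha=x_i$, and the aim is to arrange $h(\alpha)\le h(P)$.

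Once $\alpha$ is in hand, the key fact is that $|\Delta|$ divides the polynomial discriminant $\mathrm{disc}(f_\alpha)$ of the minimal polynomial $f_\alpha\in\mathbb{Z}[X]$ of $\alpha$. Writing
\[
\mathrm{disc}(f_\alpha)=a^{2\delta-2}\prod_{1\le i<j\le\delta}(\alpha_i-\alpha_j)^2=a^{2\delta-2}\det(V)^2,
\]
with $V=(\alpha_i^{j-1})$ the Vandermonde matrix formed from the Galois conjugates of $\alpha$ and $a$ the leading coefficient of $f_\alpha$, I would apply Hadamard's inequality row-wise to $V$ to obtain the classical bound
\[
|\mathrm{disc}(f_\alpha)|\le \delta^{\delta}\,M(f_\alpha)^{2\delta-2},
\]
where $M(f_\alpha)$ is the Mahler measure. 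Combined with the identity $M(f_\alpha)=e^{\delta h(\alpha)}$ and a division by $\delta$, this yields
\[
\frac{\log|\Delta|}{\delta}\le \log\delta+(2\delta-2)h(\alpha)\le \log\delta+(2\delta-2)h(P),
\]
which is exactly the stated inequality.

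The main obstacle will be the primitive element step: producing $\alpha$ with $h(\alpha)\le h(P)$ (rather than merely $h(P)+O(\log\delta)$) in the situation where no single coordinate of $P$ generates $k$, since a generic integer linear combination can introduce an additive contribution from $\log\max|c_i|$ that would degrade the $\log\delta$ term on the right. The cleanest workaround, and the route Silverman in fact follows, is to avoid choosing a primitive element at all and instead work with the full Galois orbit $P^{(1)},\ldots,P^{(\delta)}$ of $P$ directly, forming a projective analogue of the Vandermonde discriminant out of coordinate differences of the conjugate points; one then bounds this projective discriminant above by a product of heights via a single Hadamard estimate and below by $|\Delta|$ via the relation to the minimal polynomial of any coordinate ratio, thereby combining the primitive-element and Hadamard steps into one Galois-theoretic inequality.
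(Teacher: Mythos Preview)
The paper does not give a proof of this statement: it is quoted, without argument, as Theorem~2 of Silverman's paper \cite{Sil2}. So there is no ``paper's own proof'' to compare your proposal against.

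That said, your sketch is essentially Silverman's argument, and you have correctly isolated both the main computation and the main obstacle. The Hadamard bound $|\mathrm{disc}(f_\alpha)|\le \delta^{\delta}M(f_\alpha)^{2\delta-2}$ is correct (apply Hadamard row-wise to the Vandermonde matrix and use $\sum_{j=0}^{\delta-1}|\alpha_i|^{2j}\le \delta\max(1,|\alpha_i|)^{2(\delta-1)}$), and together with $|\Delta|\mid \mathrm{disc}(f_\alpha)$ and $\log M(f_\alpha)=\delta\,h(\alpha)$ it yields the inequality once one has a generator $\alpha$ of $k/\mathbb{Q}$ with $h(\alpha)\le h(P)$. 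The primitive-element step is indeed the genuine difficulty, and your description of Silverman's workaround---bypassing a single generator and forming a projective discriminant directly from the Galois conjugates $P^{(1)},\ldots,P^{(\delta)}$, bounded below via the different of $k/\mathbb{Q}$ and above by an elementary Hadamard-type estimate---is accurate.

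One small remark: you rightly reinstate the hypothesis that $k$ is the field of definition of $P$. As literally stated in the paper (for arbitrary $P\in\mathbb{P}^n(k)$) the inequality is false---take $P$ rational and $k$ of large discriminant---so this assumption is not optional.
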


For a number field $k$ and finite set of places $S$ of $k$ containing the archimedean places, define
\begin{equation*}
c_3(k, S)=
\begin{cases}
0 &\text{ if $\delta=1$ or $s=1$},\\
\frac{2s!s^{s+\frac{1}{2}}R_S}{(\log \delta/6\delta^3)^{s-2}} &\text{ otherwise},
\end{cases}
\end{equation*}
where $s=|S|$.  If $S_\infty$ denotes the set of archimedean places of $k$, then we let $c_3(k)=c_3(k,S_\infty)$.

The next lemma describes certain choices of coordinates for a point in projective space.

\begin{lemma}
\label{arch}
Let $k$ be a number field of degree $\delta$, $s$ the number of archimedean places of $k$, and $P\in \mathbb{P}^n(k)$.  
\begin{enumerate}[(a)]
\item \label{c1} There exists a choice of homogeneous coordinates $P=(x_0,\ldots, x_n)$ such that $x_0,\ldots, x_n\in \O_k$ and for any $v\in M_k$, 
\begin{align*}
\frac{1}{s}h(P)-c_3(k) &\leq \log\max\{\|x_0\|_v,\ldots, \|x_n\|_v\}\leq \frac{1}{s}h(P)+\frac{1}{2\delta s}\log |\Delta|+c_3(k), &&\text{if $v|\infty$},\\
-\frac{1}{2\delta}\log |\Delta| &\leq \log\max\{\|x_0\|_v,\ldots, \|x_n\|_v\}\leq 0, &&\text{if $v\nmid \infty$}.
\end{align*}
\item \label{c2} There exists a choice of homogeneous coordinates $P=(x_0,\ldots, x_n)$ such that $x_0,\ldots, x_n\in \O_k$ and for any $v\in M_k$, 
\begin{align*}
0 &\leq \log\max\{\|x_0\|_v,\ldots, \|x_n\|_v\}\leq (2\delta+1)h(P)+\log \delta, &&\text{if $v|\infty$},\\
 -\delta^2 h(P)-\frac{\delta}{2}\log \delta &\leq \log\max\{\|x_0\|_v,\ldots, \|x_n\|_v\}\leq 0, && \text{if $v\nmid \infty$}.
\end{align*}

\end{enumerate}
\end{lemma}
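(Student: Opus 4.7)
The plan is to treat (a) and (b) separately, each by explicitly constructing a representative of $P$ and verifying the bounds.

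For part (a), the strategy is a two-step normalization: first reduce the fractional ideal $\mathfrak{a}=(y_0,\ldots,y_n)$ generated by some initial representative, then redistribute the archimedean contributions by multiplication by a unit. For the first step, the Minkowski bound on ideal classes produces $\lambda\in k^*$ with $\lambda\mathfrak{a}\subset\O_k$ and $N(\lambda\mathfrak{a})\leq\sqrt{|\Delta|}$. After rescaling, the coordinates are integral so the non-archimedean upper bound $0$ holds, and by the product formula $\sum_{v\nmid\infty}\log\max_i\|y_i\|_v = -\tfrac{1}{\delta}\log N(\lambda\mathfrak{a})\geq -\tfrac{1}{2\delta}\log|\Delta|$; since each summand is nonpositive, each individual term lies in $[-\tfrac{1}{2\delta}\log|\Delta|,0]$, which is the non-archimedean half of (a). The archimedean sum $H:=\sum_{v\mid\infty}\log\max_i\|y_i\|_v$ then satisfies $h(P)\leq H\leq h(P)+\tfrac{1}{2\delta}\log|\Delta|$. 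For the second step, Dirichlet's unit theorem embeds $\O_k^*$ as a rank $s-1$ lattice in the hyperplane $\{(a_v)_{v\mid\infty}:\sum_v a_v=0\}\subset\mathbb{R}^s$, and a standard regulator estimate shows this lattice has $L^\infty$ covering radius at most $c_3(k)$. Apply this to the target vector $(H/s-\log\max_i\|y_i\|_v)_{v\mid\infty}$, which sums to zero, to get a unit $u$ with $|H/s-\log\max_i\|uy_i\|_v|\leq c_3(k)$ at each archimedean place; multiplication by $u$ leaves non-archimedean sizes unchanged, and substituting the range of $H$ yields the archimedean bounds in (a).

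For part (b), a simpler approach that avoids the unit step suffices. Choose $j_0$ with $y_{j_0}\neq 0$ and dehomogenize, setting $z_i=y_i/y_{j_0}$ so $z_{j_0}=1$ and therefore $\log\max_i\|z_i\|_v\geq 0$ at every $v\in M_k$. Let $\mathfrak{d}(P)=\{a\in\O_k:az_i\in\O_k\text{ for all }i\}$ be the denominator ideal and $b$ the smallest positive rational integer in $\mathfrak{d}(P)$. Since $N(\mathfrak{d}(P))\O_k\subset\mathfrak{d}(P)$ one has $b\leq N(\mathfrak{d}(P))$, and a direct computation yields $\log N(\mathfrak{d}(P))=\delta\sum_{v\nmid\infty}\log\max_i\|z_i\|_v\leq\delta h(P)$, so $\log b\leq\delta h(P)$. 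Setting $x_i=bz_i$ integralizes the coordinates (giving the non-archimedean upper bound $0$) and preserves the archimedean lower bound $0$. For the remaining two bounds, at each archimedean $v$,
\begin{equation*}
\log\max_i\|x_i\|_v=\tfrac{[k_v:\mathbb{Q}_v]}{\delta}\log b+\log\max_i\|z_i\|_v\leq\log b+h(P)\leq(\delta+1)h(P),
\end{equation*}
which is well below $(2\delta+1)h(P)+\log\delta$; and at each non-archimedean $v$, $\log\max_i\|x_i\|_v\geq\log\|x_{j_0}\|_v=\log\|b\|_v\geq -\log b\geq -\delta h(P)$, comfortably above $-\delta^2 h(P)-\tfrac{\delta}{2}\log\delta$.

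The main substantive input throughout is the regulator-based $L^\infty$ covering bound used in (a); obtaining that bound with the explicit constant $c_3(k)$ (rather than with an unspecified constant depending on $k$) is the one place where care is required, and it is the ingredient that forces $c_3(k)$ to appear in the bounds of (a) while being absent from those of (b). Everything else is bookkeeping with the product formula and the Minkowski ideal class bound.
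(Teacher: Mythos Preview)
Your proof of part (a) follows the same two-step strategy as the paper: first use the Minkowski bound to find integral coordinates with small ideal norm, then adjust by a unit using the covering radius bound for the logarithmic unit lattice (which the paper attributes to Hajdu). The argument is correct and essentially identical to the paper's.

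Your proof of part (b), however, takes a genuinely different and more elementary route. The paper starts from the coordinates obtained in (a), multiplies by $N^k_{\mathbb{Q}}(x_0)/x_0$ to force one coordinate into $\mathbb{Z}$, and then invokes Silverman's discriminant--height inequality (Theorem~\ref{tSil}) to convert the bound $h(P)+\tfrac{1}{2\delta}\log|\Delta|$ into the discriminant-free bound $\delta h(P)+\tfrac{1}{2}\log\delta$. You instead dehomogenize so that one coordinate equals $1$, clear denominators with the smallest positive rational integer $b$ in the denominator ideal, and bound $\log b$ directly by $\delta h(P)$ via the norm. This avoids Silverman's theorem entirely and in fact yields sharper constants: your archimedean upper bound $(\delta+1)h(P)$ improves on the paper's $(2\delta+1)h(P)+\log\delta$, and your non-archimedean lower bound $-\delta h(P)$ improves on $-\delta^2 h(P)-\tfrac{\delta}{2}\log\delta$. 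Both approaches are valid; yours is cleaner for (b), while the paper's has the mild advantage of reusing the coordinates from (a).
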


\begin{proof}
Let $S_\infty$ denote the set of archimedean places of $k$.  The case $k=\mathbb{Q}$ follows immediately by choosing $x_0,\ldots, x_n$ to be integers with $\gcd(x_0,\ldots, x_n)=1$ and $P=(x_0,\ldots, x_n)$.  We assume from now on that $\delta>1$.
Let $P=(x_0,\ldots, x_n)$ be some choice of homogeneous coordinates with $x_0,\ldots, x_n\in \O_k$.  Let $I$ be the ideal of $\O_k$ generated by $x_0,\ldots, x_n$.  From the Minkowski bound, the ideal class of $I$ contains an (integral) ideal with norm $\leq \sqrt{|\Delta|}$.  Thus, after rescaling $x_0,\ldots, x_n$, we may assume that the norm of $I$ satisfies $N(I)\leq \sqrt{|\Delta|}$.  From the definition of the height, we have
\begin{equation*}
h(P)=\sum_{v\in M_k}\log\max\{\|x_0\|_v,\ldots, \|x_n\|_v\}=\sum_{v\in S_\infty}\log\max_i \|x_i\|_v-\frac{1}{\delta} \log N(I).
\end{equation*}
So
\begin{equation*}
h(P) \leq \sum_{v\in S_\infty}\log\max\{\|x_0\|_v,\ldots, \|x_n\|_v\}\leq h(P)+\frac{1}{2\delta}\log |\Delta|.
\end{equation*}

We first consider \eqref{c1}.  The case $s=|S_\infty|=1$ is immediate from the above, so we assume from now on that $s>1$.  Consider the image of the unit group $\O_k^*$ via the logarithmic map $\lambda:\O_k^*\mapsto \mathbb{R}^s$, $\lambda(u)=(\log \|u\|_v)_{v\in S_\infty}$.  The image is a lattice in the hyperplane of $\mathbb{R}^s$ defined by $\sum_{v\in S_\infty}x_v=0$.  From \cite[p. 5]{Haj}, there exists a fundamental domain of this lattice with diameter $\leq \frac{2s!s^{s+\frac{1}{2}}R_k}{(\log \delta/6\delta^3)^{s-2}}$.  Let $c=\sum_{v\in S_\infty}\log\max\{\|x_0\|_v,\ldots, \|x_n\|_v\}$ and consider the vector 
\begin{equation*}
\v=(\log\max\{\|x_0\|_v,\ldots, \|x_n\|_v\}-c/s)_{v\in S_\infty}.
\end{equation*}
Then there exists a unit $u\in \O_k^*$ such that

\begin{equation*}
|\v-\lambda(u)|\leq \frac{2s!s^{s+\frac{1}{2}}R_k}{(\log \delta/6\delta^3)^{s-2}}=c_3(k).
\end{equation*}
Therefore, for every $v\in S_\infty$,
\begin{equation*}
|\log\max\{\|u^{-1}x_0\|_v,\ldots, \|u^{-1}x_n\|_v\}-c/s|\leq c_3(k)
\end{equation*}
and
\begin{equation*}
\frac{1}{s}h(P)-c_3(k) \leq \log\max\{\|u^{-1}x_0\|_v,\ldots, \|u^{-1}x_n\|_v\}\leq \frac{1}{s}h(P)+\frac{1}{2\delta s}\log |\Delta|+c_3(k).
\end{equation*}
Note that if $v\nmid \infty$, we also have
\begin{equation*}
-\frac{1}{2\delta}\log |\Delta| \leq -\frac{1}{\delta}\log N(I)\leq \log\max\{\|x_0\|_v,\ldots, \|x_n\|_v\}\leq 0.
\end{equation*}

We now prove \eqref{c2}.  From our earlier choice of coordinates, we have in particular
\begin{equation*}
\sum_{v\in S_\infty}\log \|x_0\|_v=\frac{1}{\delta}\log|N^k_{\mathbb{Q}}(x_0)|\leq h(P)+\frac{1}{2\delta}\log |\Delta|.
\end{equation*}
Then after scaling by $N^k_{\mathbb{Q}}(x_0)/x_0\in \O_k$, we may take $P=(x_0,\ldots, x_n)$ where $x_0\in \mathbb{Z}$, 
\begin{equation*}
\frac{1}{\delta}\log|x_0|\leq h(P)+\frac{1}{2\delta}\log |\Delta|\leq \delta h(P)+\frac{1}{2}\log \delta
\end{equation*}
by Theorem \ref{tSil}, and $x_1,\ldots, x_n\in \O_k$.  Let $v\in S_\infty$.  Then $\log\max_i \|x_i\|_v\geq 0$ and
\begin{align*}
\log\max\{\|x_0\|_v,\ldots, \|x_n\|_v\}&=h(P)-\sum_{w\in M_k\setminus \{v\}}\log\max\{\|x_0\|_w,\ldots, \|x_n\|_w\}\\
&\leq h(P)-\sum_{w\in M_k\setminus \{v\}}\log \|x_0\|_w\leq h(P)+\log \|x_0\|_v\\
&\leq h(P)+2(\delta h(P)+\frac{1}{2}\log \delta)\leq (2\delta+1)h(P)+\log \delta.
\end{align*}

We also clearly have
\begin{equation*}
-\delta^2 h(P)-\frac{\delta}{2}\log \delta\leq -\log |x_0| \leq \log\max\{\|x_0\|_v,\ldots, \|x_n\|_v\}\leq 0
\end{equation*}
if $v$ is nonarchimedean.
\end{proof}

We also need the following result from the main theorem of \cite{Haj}, which is closely related to the previous lemma.
\begin{theorem}
\label{abl}
Let $k$ be a number field of degree $\delta$ and let $S$ be a finite set of places of $k$ containing the archimedean places.  Let $\alpha\in k$.  Then we can write
\begin{equation*}
\alpha=\beta u,
\end{equation*}
where $u\in \O_{k,S}^*$ and
\begin{align*}
h(\beta)&<s c_3(k,S)+\sum_{v\not\in S} h_v(\alpha)+\sum_{v\in S}\log \|\alpha\|_v\\
&< s c_3(k,S)+\sum_{v\not\in S} h_v(\alpha)+h_v(1/\alpha).
\end{align*}
\end{theorem}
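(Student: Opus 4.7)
\textbf{Proof plan for Theorem \ref{abl}.}
The plan is to quote (or more precisely, to repeat in a slightly different language) the main theorem of Hajdu \cite{Haj} on the shape of a fundamental domain for the $S$-unit lattice. Let $s = |S|$ and set
\[
c \;=\; \sum_{v\in S}\log \|\alpha\|_v.
\]
Consider the logarithmic embedding $\lambda\colon \O_{k,S}^* \to \mathbb{R}^s$, $\lambda(u) = (\log \|u\|_v)_{v\in S}$. By the product formula, $\lambda(\O_{k,S}^*)$ is a lattice of full rank in the hyperplane $H = \{(x_v)_{v\in S} : \sum_{v\in S} x_v = 0\}$, and the vector $\v = (\log\|\alpha\|_v - c/s)_{v\in S}$ lies in $H$. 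By the effective geometry-of-numbers bound which underlies Lemma \ref{arch} (this is exactly the content of Hajdu's main theorem, giving a fundamental domain for $\lambda(\O_{k,S}^*)$ of diameter at most $c_3(k,S)$), there exists $u \in \O_{k,S}^*$ with
\[
\bigl|\log\|u\|_v - (\log\|\alpha\|_v - c/s)\bigr| \;\leq\; c_3(k,S)
\qquad \text{for every } v\in S.
\]

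With this $u$ I would set $\beta = \alpha/u$. Since $u$ is an $S$-unit, $\|\beta\|_v = \|\alpha\|_v$ for $v\notin S$, so $h_v(\beta) = h_v(\alpha)$ at those places. For $v \in S$ the previous inequality gives $\log\|\beta\|_v \leq c/s + c_3(k,S)$, whence $h_v(\beta) \leq \max\{c/s + c_3(k,S),\, 0\} \leq c_3(k,S) + \max\{c/s,\,0\}$. Summing the local contributions,
\[
h(\beta) \;=\; \sum_{v\notin S} h_v(\alpha) + \sum_{v\in S} h_v(\beta)
\;\leq\; \sum_{v\notin S} h_v(\alpha) + sc_3(k,S) + \max\{c,0\}.
\]

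To match the form in the theorem, I would use the product formula to rewrite $\sum_{v\notin S} h_v(\alpha) + c = \sum_{v\notin S}\bigl(h_v(\alpha) - \log\|\alpha\|_v\bigr) = \sum_{v\notin S} h_v(1/\alpha) \geq 0$. Therefore $\sum_{v\notin S} h_v(\alpha) + \max\{c,0\} \leq \sum_{v\notin S} h_v(\alpha) + c$ in each case (when $c<0$ one absorbs the difference into the nonnegative quantity $\sum_{v\notin S} h_v(1/\alpha)$), which is the first displayed bound. The second displayed inequality is then an immediate weakening using the elementary estimate $\log\|\alpha\|_v \leq h_v(\alpha) + h_v(1/\alpha)$ together with the product formula.

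The only real obstacle is bookkeeping: one must be careful about the sign of $c$ and about whether $c/s + c_3(k,S)$ is positive when converting $\max\{\log\|\beta\|_v,0\}$ into a useful bound on $h_v(\beta)$, and one must check that the diameter constant produced by the geometry-of-numbers input of \cite{Haj} really coincides with the constant $c_3(k,S)$ defined in the $S$-unit version. Beyond that, the argument is a direct translation of Hajdu's main theorem into the language of local heights.
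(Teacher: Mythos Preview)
Your approach is exactly the one the paper has in mind: the paper merely cites Hajdu's fundamental-domain bound and remarks that the same argument works for general $\alpha$, and you have correctly reconstructed that argument via the logarithmic embedding of $\O_{k,S}^*$.

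There is, however, a genuine slip in the final bookkeeping. You assert that
\[
\sum_{v\notin S} h_v(\alpha) + \max\{c,0\} \;\leq\; \sum_{v\notin S} h_v(\alpha) + c,
\]
``absorbing the difference into $\sum_{v\notin S} h_v(1/\alpha)$'' when $c<0$. This inequality is simply $\max\{c,0\}\le c$, which is false for $c<0$; nothing is being absorbed, since $\sum_{v\notin S} h_v(\alpha)$ appears identically on both sides. So as written you have not obtained the first displayed bound in the case $c<0$.

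The repair is to avoid the crude termwise estimate $h_v(\beta)\le \max\{c/s+c_3,0\}$ and instead bound the sum directly. Writing $S^+=\{v\in S:\log\|\beta\|_v>0\}$, you have
\[
\sum_{v\in S} h_v(\beta)=\sum_{v\in S^+}\log\|\beta\|_v \;\le\; |S^+|\Bigl(\tfrac{c}{s}+c_3(k,S)\Bigr)\;\le\; s\Bigl(\tfrac{c}{s}+c_3(k,S)\Bigr)=c+s\,c_3(k,S),
\]
valid whenever $c/s+c_3\ge 0$; in the complementary range $c<-s c_3$ every $\log\|\beta\|_v$ is negative, the $S$-contribution vanishes, and one checks the statement directly (this is where your own caveat about the sign of $c$ comes in, and one should verify it against Hajdu's formulation rather than sweep it under the rug). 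With this sharper estimate the first bound follows, and the passage to the second bound is then just the product-formula identity $\sum_{v\notin S} h_v(\alpha)+c=\sum_{v\notin S} h_v(1/\alpha)$ together with $h_v(\alpha)\ge 0$, exactly as the paper indicates.
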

The last inequality follows from the product formula.  This result is actually only proven in \cite{Haj} for $S$-integers $\alpha$, but the same proof given there yields the result above.

We note the estimates
\begin{align}
c_3(k,S)&\leq 2^{4s}s^{2s} \delta^{3s+\delta-6}\sqrt{|\Delta|}(\log^*|\Delta|)^{\delta-1}(\log^*N)^{s-\delta/2},\label{c3est1}\\
c_3(k)&\leq 2^{4\delta} \delta^{6\delta-6}\sqrt{|\Delta|}(\log^*|\Delta|)^{\delta-1}.\label{c3est2}
\end{align}

\subsection{Miscellaneous elementary estimates}
We have the following lower bound for heights on $\mathbb{P}^1$.

\begin{lemma}
\label{lhb}
Let $S$ be a set of places of a number field $k$.  Let $P,Q\in \mathbb{P}^1(k)$, $P\neq Q$.  Then
\begin{equation*}
\sum_{v\in S} h_{Q,v}(P)\geq -\log 2.
\end{equation*}
\end{lemma}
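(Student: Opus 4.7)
The plan is to argue directly from the definition of $h_{Q,v}$, using the generalized triangle inequality and the product identity $\prod_{v\in M_k}\epsilon_v'(2)=2$ recorded in Section 2. Write $P=(x_0,x_1)$ and $Q=(y_0,y_1)$ with $x_i,y_i\in k$. Since $P,Q\in\mathbb{P}^1$, the quantity $\max_{i,j}\|x_iy_j-x_jy_i\|_v$ appearing in the denominator of the definition of $h_{Q,v}(P)$ collapses to $\|x_0y_1-x_1y_0\|_v$ (the diagonal terms vanish and the two off-diagonal terms differ only by a sign), and this value is nonzero at every $v$ because $P\neq Q$.

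The key local estimate comes from the triangle inequality from Section 2: for every $v\in M_k$,
\[
\|x_0y_1-x_1y_0\|_v\leq \epsilon_v'(2)\max\{\|x_0y_1\|_v,\|x_1y_0\|_v\}\leq \epsilon_v'(2)\max_i\|x_i\|_v\max_j\|y_j\|_v.
\]
Substituting this into the definition of $h_{Q,v}(P)$ (and taking the logarithm) yields the pointwise lower bound $h_{Q,v}(P)\geq -\log\epsilon_v'(2)$ for every $v\in M_k$.

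Finally, I would sum over $v\in S$ and then enlarge the range of summation to all of $M_k$. This is harmless because $\log\epsilon_v'(2)=0$ at nonarchimedean places and $\log\epsilon_v'(2)\geq 0$ at archimedean places, so
\[
\sum_{v\in S}h_{Q,v}(P)\geq -\sum_{v\in S}\log\epsilon_v'(2)\geq -\sum_{v\in M_k}\log\epsilon_v'(2)=-\log 2,
\]
where the last equality is the product identity $\prod_{v\in M_k}\epsilon_v'(2)=2$ from Section 2. This is exactly the claimed inequality, so no real obstacle arises; the argument reduces to a clean application of the triangle inequality together with the product formula, and does not require any deep input.
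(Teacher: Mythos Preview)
Your proof is correct and follows essentially the same route as the paper: both reduce the denominator to the single cross term $\|x_0y_1-x_1y_0\|_v$, apply the triangle inequality $\|x_0y_1-x_1y_0\|_v\leq \epsilon_v'(2)\max_i\|x_i\|_v\max_j\|y_j\|_v$ to obtain the local bound $h_{Q,v}(P)\geq -\log\epsilon_v'(2)$, and then sum using $\prod_{v\in M_k}\epsilon_v'(2)=2$. Your write-up is slightly more explicit in justifying the extension of the sum from $S$ to all of $M_k$, but there is no substantive difference.
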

\begin{proof}
Let $P=(x_1,y_1), Q=(x_2,y_2)$, $x_1,x_2,y_1,y_2\in k$.  Then
\begin{align*}
h_{Q,v}(P)&=\log \frac{\max \{\|x_1\|_v,\|y_1\|_v\}\max \{\|x_2\|_v,\|y_2\|_v\}}{\|x_1y_2-x_2 y_1\|_v}\\
&\geq \log \frac{\max \{\|x_1\|_v,\|y_1\|_v\}\max \{\|x_2\|_v,\|y_2\|_v\}}{\epsilon_v'(2)\max\{\|x_1y_2\|_v,\|x_2 y_1\|_v}\\
&\geq \log \frac{\max \{\|x_1\|_v,\|y_1\|_v\}\max \{\|x_2\|_v,\|y_2\|_v\}}{\epsilon_v'(2)\max \{\|x_1\|_v,\|y_1\|_v\}\max \{\|x_2\|_v,\|y_2\|_v\}}\\
&\geq -\log \epsilon_v'(2).
\end{align*}
Therefore,
\begin{equation*}
\sum_{v\in S} h_{Q,v}(P)\geq -\log 2.
\end{equation*}
\end{proof}

We need an estimate on the height of a product of polynomials \cite[Prop.~B.7.4]{HS}.
\begin{lemma}
\label{ph}
Let $k$ be a number field.  Let $f_1,\ldots, f_m\in k[x_1,\ldots, x_n]$ be polynomials and let $f=f_1\cdots f_m$.  Then for any $v\in M_k$,
\begin{equation*}
|f|_v\leq \epsilon_v\left(\prod_{i=2}^m2^{\deg f_i}\right)\prod_{i=1}^m|f_i|_v.
\end{equation*}
In particular,
\begin{equation*}
h(f)\leq \sum_{i=1}^m h(f_i)+\left(\sum_{i=2}^m\deg f_i\right)\log 2.
\end{equation*}
\end{lemma}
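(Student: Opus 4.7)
The plan is to induct on $m$, with the whole argument reducing to the base case $m=2$. Assume the bound holds for $m-1$ factors and write $f=(f_1\cdots f_{m-1})f_m$. Apply the base case to the two factors $g:=f_1\cdots f_{m-1}$ and $f_m$, obtaining
\[|f|_v=|gf_m|_v\le\epsilon_v(2^{\deg f_m})\,|g|_v\,|f_m|_v,\]
and then use the inductive hypothesis on $|g|_v\le\epsilon_v\bigl(\prod_{i=2}^{m-1}2^{\deg f_i}\bigr)\prod_{i=1}^{m-1}|f_i|_v$. Multiplying these and using $\epsilon_v(ab)=\epsilon_v(a)\epsilon_v(b)$ gives the stated archimedean factor $\epsilon_v\bigl(\prod_{i=2}^{m}2^{\deg f_i}\bigr)$ in front of $\prod_{i=1}^m|f_i|_v$.

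So the work is in the base case $m=2$. Split on the place type. For nonarchimedean $v$, the classical Gauss lemma for the ultrametric absolute value gives $|f_1f_2|_v\le|f_1|_v\,|f_2|_v$ directly (with $\epsilon_v=1$), since the maximum coefficient of a product is bounded by the product of maximum coefficients when $|{\cdot}|_v$ is ultrametric. For archimedean $v$, write $f_1=\sum_\alpha a_\alpha x^\alpha$, $f_2=\sum_\beta b_\beta x^\beta$; then the coefficient of $x^\gamma$ in $f_1f_2$ is the finite sum $\sum_{\alpha+\beta=\gamma}a_\alpha b_\beta$. Each summand has $v$-absolute value at most $|f_1|_v|f_2|_v$, and the number of summands is bounded by the number of monomials in $f_2$, which for a polynomial of degree $d_2$ is at most $2^{d_2}$ (for univariate polynomials this is $d_2+1\le 2^{d_2}$; in the multivariate case one can either bound the monomial count combinatorially or first apply a Kronecker substitution $x_i\mapsto t^{N^{i-1}}$ with $N$ larger than $\deg(f_1f_2)$, which preserves the coefficient max-norm and reduces to the univariate estimate). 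Therefore $|f_1f_2|_v\le 2^{\deg f_2}|f_1|_v|f_2|_v$ at archimedean $v$, completing the base case.

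For the second (global) assertion, take logarithms of the local inequality, weight each place by $[k_v:\mathbb{Q}_v]/[k:\mathbb{Q}]$, and sum over $v\in M_k$. Using $\log\epsilon_v'(2)=[k_v:\mathbb{Q}_v]/[k:\mathbb{Q}]\log 2$ at archimedean places and $\sum_{v\mid\infty}[k_v:\mathbb{Q}_v]/[k:\mathbb{Q}]=1$, the archimedean contribution of the constant $\prod_{i=2}^m 2^{\deg f_i}$ collapses exactly to $\bigl(\sum_{i=2}^m\deg f_i\bigr)\log 2$, while the other sums yield $h(f)$ on the left and $\sum_i h(f_i)$ on the right. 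The only step requiring any genuine thought is the coefficient-counting bound at archimedean places; everything else is formal bookkeeping with the product formula and the inductive step.
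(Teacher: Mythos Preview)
The paper does not give a proof of this lemma; it simply cites \cite[Prop.~B.7.4]{HS}. Your inductive reduction to $m=2$, the nonarchimedean Gauss-lemma step, and the deduction of the global height inequality from the local one are all fine. The gap is in the archimedean multivariate case.

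You assert that the number of monomials in $f_2$ is at most $2^{\deg f_2}$. That is false in more than one variable: in $n$ variables the count is $\binom{n+d_2}{n}$, and already for $n=2$, $d_2=2$ one has $\binom{4}{2}=6>4=2^2$. Your proposed repair via a Kronecker substitution $x_i\mapsto t^{N^{i-1}}$ does preserve the coefficient max-norm, but it does \emph{not} preserve the degree: the image of $f_2$ has $t$-degree of order $d_2 N^{\,n-1}$, so the univariate estimate applied to the image gives $2^{d_2 N^{\,n-1}}$, not $2^{d_2}$. So neither route yields the claimed constant.

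In fact the local inequality as stated, with $\prod_{i=2}^{m}2^{\deg f_i}$, is too strong in several variables. Taking $f_1=f_2=f_3=x+y+1$ gives $f=(x+y+1)^3$ with largest coefficient $6$, while the asserted bound is $2^{\deg f_2+\deg f_3}\cdot 1=4$; equivalently $h(f)=\log 6>\log 4$. The inequality actually proved in Hindry--Silverman has the factor $2^{\deg f}=2^{\sum_{i=1}^{m}\deg f_i}$ (i.e.\ the product starts at $i=1$), which does cover this example and is what the paper uses in its application (the extra $d_1\log 2$ is absorbed into the crude bound $2d^2\log 2$). So the difficulty you ran into in the multivariate case is not a defect of your strategy per se but reflects a slight misstatement of the constant; with the corrected constant one argues differently (e.g.\ via Mahler measure in Hindry--Silverman) rather than by your monomial count.
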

For maps between projective spaces, we have the following height inequality \cite[p.~181]{HS}.
\begin{lemma}
\label{rh}
Let $\phi:\mathbb{P}^n\to \mathbb{P}^m$ be a rational map of degree $d$ defined over $\Qbar$.  Then
\begin{equation*}
h(\phi(P))\leq dh(P)+h(\phi)+\log \binom{n+d}{n}
\end{equation*}
for all $P\in \mathbb{P}^n(\Qbar)$ where $\phi$ is defined.
\end{lemma}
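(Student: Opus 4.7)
The plan is to reduce the inequality to a local estimate at each place and then sum. First I would write the rational map in a standard form: $\phi=(f_0,\ldots,f_m)$ with $f_0,\ldots,f_m\in\Qbar[x_0,\ldots,x_n]$ homogeneous of degree $d$ and having no common factor. The key combinatorial observation is that each $f_i$ is a $\Qbar$-linear combination of the monomials of degree $d$ in $x_0,\ldots,x_n$, and the number of such monomials is exactly $\binom{n+d}{n}$.

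Next, for a point $P=(p_0,\ldots,p_n)\in\mathbb{P}^n(\Qbar)$ where $\phi$ is defined and a place $v$ of a number field containing the coefficients of $\phi$ and the coordinates of $P$, I would estimate $|f_i(P)|_v$ using the triangle inequality applied monomial by monomial. At an archimedean place this yields
\begin{equation*}
|f_i(P)|_v \leq \binom{n+d}{n}\, |f_i|_v\, \max_j |p_j|_v^d,
\end{equation*}
while at a non-archimedean place the combinatorial factor is absent by the ultrametric inequality, so the bound is simply $|f_i(P)|_v\leq |f_i|_v\max_j|p_j|_v^d$. Both cases are captured uniformly by
\begin{equation*}
\max_i \|f_i(P)\|_v \leq \epsilon_v'\!\left(\binom{n+d}{n}\right)\max_i\|f_i\|_v\,\max_j\|p_j\|_v^d.
\end{equation*}

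Finally, taking logarithms and summing over all places $v\in M_k$ (using $\sum_v\log\epsilon_v'(x)=\log x$ for $x>0$) gives
\begin{equation*}
h(\phi(P))=\sum_{v}\log\max_i\|f_i(P)\|_v \leq \log\binom{n+d}{n}+h(\phi)+d\,h(P),
\end{equation*}
which is the desired inequality. The independence of $h(P)$ and $h(\phi)$ from the choice of homogeneous coordinates and defining polynomials justifies that the bound is well-defined; this is already recorded in the height definitions of the paper. There is no real obstacle here: the only subtlety is keeping track of the archimedean-versus-nonarchimedean combinatorial factor via $\epsilon_v'$, which is handled automatically by the product formula identity $\prod_v\epsilon_v'(x)=x$.
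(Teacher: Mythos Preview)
Your proof is correct and is exactly the standard argument. Note that the paper does not actually prove this lemma; it merely states it with a citation to \cite[p.~181]{HS}, so there is no ``paper's own proof'' to compare against---your argument is essentially the one found in that reference.
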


We also need an elementary estimate for polynomials in two variables.
\begin{lemma}
\label{inTaylor}
Let $k$ be a number field.  Let $f\in k[x,y]$ be a polynomial of degree $d$ and let $v\in M_k$.  Let $a,b,x,y\in k$ and suppose that $|x-a|_v,|y-b|_v\leq 1$.  Then
\begin{equation*}
|f(x,y)-f(a,b)|_v\leq \epsilon_v((d+2)^42^d)|f|_v\max\{|a|_v,|b|_v,1\}^d\max\{|x-a|_v,|y-b|_v\}.
\end{equation*}
\end{lemma}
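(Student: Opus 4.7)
The plan is to prove this by Taylor expanding $f$ around $(a,b)$ and estimating term by term. Writing $f(x,y) = \sum_{i+j\leq d} c_{ij} x^i y^j$, I substitute $x = a+(x-a)$ and $y = b+(y-b)$ and apply the binomial theorem in each variable to obtain
\begin{equation*}
f(x,y) = \sum_{i+j\leq d} c_{ij} \sum_{k=0}^i\sum_{\ell=0}^j \binom{i}{k}\binom{j}{\ell} a^{i-k} b^{j-\ell}(x-a)^k (y-b)^\ell.
\end{equation*}
The $(k,\ell)=(0,0)$ contribution equals $f(a,b)$, so $f(x,y)-f(a,b)$ is the sum of the same expression restricted to $(k,\ell)\neq(0,0)$.

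Next, for each remaining term I bound its $v$-adic absolute value. Using $|a|_v^{i-k}|b|_v^{j-\ell} \leq \max\{|a|_v,|b|_v,1\}^{i+j} \leq \max\{|a|_v,|b|_v,1\}^d$, together with the hypothesis $|x-a|_v,|y-b|_v\leq 1$ and $k+\ell\geq 1$ (which gives $|x-a|_v^k |y-b|_v^\ell \leq \max\{|x-a|_v,|y-b|_v\}$), each summand is bounded by $|c_{ij}|_v\binom{i}{k}\binom{j}{\ell}\max\{|a|_v,|b|_v,1\}^d\max\{|x-a|_v,|y-b|_v\}$.

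Finally I split by the type of place. At a nonarchimedean $v$, the ultrametric inequality and the fact that binomial coefficients are rational integers (hence have $|\cdot|_v\leq 1$) give the stated bound with $\epsilon_v(\cdots)=1$. At an archimedean $v$, I apply the triangle inequality and sum the binomial coefficients: $\sum_{k=0}^i\binom{i}{k}=2^i$ and $\sum_{\ell=0}^j\binom{j}{\ell}=2^j$, so $\binom{i}{k}\binom{j}{\ell}\leq 2^{i+j}\leq 2^d$. The number of pairs $(i,j)$ with $i+j\leq d$ is $\binom{d+2}{2}\leq(d+2)^2$, and the total combinatorial factor is at most $(d+2)^2\cdot 2^d\leq (d+2)^4 2^d$, yielding the desired inequality.

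There is no real obstacle—the whole argument is essentially bookkeeping around a Taylor expansion. The only nontrivial observation is that the restriction $(k,\ell)\neq (0,0)$ together with $|x-a|_v,|y-b|_v\leq 1$ lets one extract a single factor of $\max\{|x-a|_v,|y-b|_v\}$ rather than a product, which is what produces a linear (as opposed to multiplicative) dependence on the perturbation in the final bound.
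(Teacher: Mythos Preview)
Your proof is correct and follows essentially the same approach as the paper: both expand $f$ around $(a,b)$ (you via direct binomial substitution, the paper via the equivalent Taylor formula), subtract off the constant term, and bound the remaining terms using $|x-a|_v,|y-b|_v\leq 1$. Your counting in the archimedean case in fact gives the slightly sharper constant $\binom{d+2}{2}2^d\leq (d+2)^2 2^d$, which you then relax to $(d+2)^4 2^d$ to match the statement; the paper picks up two factors of $\binom{d+2}{2}$ (one from the number of Taylor terms, one from estimating each partial derivative), which is why $(d+2)^4$ appears there.
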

\begin{proof}
Let $f(x,y)=\sum c_{ij}x^iy^j$.  Looking at the Taylor series for $f(x,y)$ around $(a,b)$ and applying the triangle inequality, we find
\begin{multline*}
|f(x,y)-f(a,b)|_v\leq\left|\sum_{m,n,m+n>0}\left(\frac{\partial^{m+n}f}{\partial x^m\partial y^n}\right)(a,b)\frac{(x-a)^m(y-b)^n}{m!n!}\right|_v\\
\leq \epsilon_v\left(\binom{d+2}{2}\right)\max_{m,n} \left|\frac{1}{m!n!}\left(\frac{\partial^{m+n}f}{\partial x^m\partial y^n}\right)(a,b)\right|_v\max\{|x-a|_v,|y-b|_v\}.
\end{multline*}
Since
\begin{align*}
\left|\frac{1}{m!n!}\left(\frac{\partial^{m+n}f}{\partial x^m\partial y^n}\right)(a,b)\right|_v&=\left|\sum c_{ij}\binom{i}{m}\binom{j}{n}a^{i-m}b^{j-n}\right|_v\\
&\leq \epsilon_v\left(\binom{d+2}{2}\right)\max_{i,j} |c_{ij}|_v\left|\binom{i}{m}\binom{j}{n}\right|_v|a^{i-m}b^{j-n}|_v\\
&\leq \epsilon_v\left(\binom{d+2}{2}2^d\right)|f|_v\max\{|a|_v,|b|_v,1\}^d,
\end{align*}
we have
\begin{equation*}
|f(x,y)-f(a,b)|_v\leq \epsilon_v((d+2)^42^d)|f|_v\max\{|a|_v,|b|_v,1\}^d\max\{|x-a|_v,|y-b|_v\}.
\end{equation*}
\end{proof}

Finally, we prove an explicit version of Lemma \ref{L3} when $X=\mathbb{P}^2$.

\begin{lemma}
\label{lemn}
Let $k$ be a number field and let $\phi\in k(\mathbb{P}^2)$ be a rational function of degree $d$ on $\mathbb{P}^2$.  Let $P,Q\in \mathbb{P}^2(k)\setminus \Supp \phi$ and $T\subset M_k$.  Suppose that $\phi(P)\neq \phi(Q)$. Then
\begin{equation*}
\sum_{v\in T}h_{Q,v}(P)\leq \sum_{v\in T} h_{\phi(Q),v}(\phi(P))+(2d+2)h(Q)+8\log(d+2)+(2d+4)\log 2.
\end{equation*}
\end{lemma}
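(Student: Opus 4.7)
The plan is to make the blow-up argument underlying Lemma \ref{L3} explicit for $X = \mathbb{P}^2$, using Lemma \ref{inTaylor} as the quantitative input. Write $\phi = f/g$ with $f, g \in k[X_0, X_1, X_2]$ coprime and homogeneous of degree $d$. At each place $v \in T$, after renumbering coordinates if needed, dehomogenize in the affine patch $X_0 \neq 0$, so that $P = (1, u_1, u_2)$ and $Q = (1, v_1, v_2)$, and form $\tilde f(x_1,x_2) = f(1,x_1,x_2)$ and $\tilde g$ analogously. Split the analysis into two regimes depending on whether $\max_i |u_i - v_i|_v \leq 1$ or not.

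In the close regime (Case 1), apply Lemma \ref{inTaylor} twice, once to $\tilde f$ and once to $\tilde g$, both expanded about $(v_1, v_2)$, and combine via
\begin{equation*}
\tilde f(u)\tilde g(v) - \tilde f(v)\tilde g(u) = \bigl(\tilde f(u) - \tilde f(v)\bigr)\tilde g(v) - \tilde f(v)\bigl(\tilde g(u) - \tilde g(v)\bigr),
\end{equation*}
bounding $|\tilde f(v)|_v, |\tilde g(v)|_v$ in terms of $\max(|f|_v,|g|_v) \max(1,|v_i|_v)^d$. The key outcome, after conversion to normalized absolute values, is a Lipschitz-type estimate
\begin{equation*}
\|F(P,Q)\|_v \leq \epsilon_v'(C(d))\,\|f\|_v\|g\|_v\,\|Q\|_v^{2d}\,\|P \wedge Q\|_v,
\end{equation*}
where $F(P,Q) = f(P)g(Q) - f(Q)g(P)$. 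Crucially, because the Taylor expansion is centered at $Q$, this bound involves $\|Q\|_v^{2d}$ and no $\|P\|_v$, and the logarithm of $C(d)$ accumulates exactly the $8\log(d+2)$ and the bulk of the $(2d+c)\log 2$ in the stated constants. Combined with the Case 1 consequence $\|P\|_v \leq \epsilon_v'(2)\,\|Q\|_v$ (from $\|u_i\|_v \leq \epsilon_v'(2)\max(\|u_i - v_i\|_v, \|v_i\|_v) \leq \epsilon_v'(2)\|Q\|_v$), unwinding the definitions of $h_{Q,v}(P)$ and $h_{\phi(Q),v}(\phi(P))$ makes the $\log \|P\wedge Q\|_v$ cancel and produces a local inequality with no $\log\|P\|_v$ appearing, of the form
\begin{equation*}
h_{Q,v}(P) - h_{\phi(Q),v}(\phi(P)) \leq \log \epsilon_v'(2C(d)) + \log \|f\|_v + \log \|g\|_v + (2d+2)\log\|Q\|_v - \log\|\phi(P)\|_v - \log\|\phi(Q)\|_v.
\end{equation*}

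In the far regime (Case 2), one has $\|P \wedge Q\|_v \geq \max_i \|u_i - v_i\|_v > 1$, and elementary estimates using the same triangle-inequality argument give directly $h_{Q,v}(P) \leq 2\log\|Q\|_v + \log \epsilon_v'(2)$, while Lemma \ref{lhb} gives $h_{\phi(Q),v}(\phi(P)) \geq -\log 2$. Thus Case 2 places contribute at most $2\log\|Q\|_v$ plus a bounded constant per place, which fits inside the $(2d+2)h(Q)$ allowance. Summing both regimes over $v \in T$ and using $\sum_v \log\|Q\|_v = h(Q)$ with $\log\|Q\|_v \geq 0$ produces the claimed $(2d+2)h(Q)$, while the combinatorial constants from the two Taylor applications and the triangle-inequality factor $\epsilon_v'(2)$ assemble into $8\log(d+2) + (2d+4)\log 2$.

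The main obstacle is twofold. First, one has to choose the affine patch consistently enough across places that the local estimates assemble cleanly; since the ``best'' coordinate of $Q$ can vary with $v$, this requires either fixing one coordinate globally (possible after a global renumbering) or handling the alternative patches by symmetry of the argument. Second, and more delicately, the residual ``coefficient'' terms $\log\|f\|_v + \log\|g\|_v - \log\|\phi(P)\|_v - \log\|\phi(Q)\|_v$ must be seen to sum to something absorbable into the explicit constants without reintroducing any $h(P)$ or $h(\phi)$ dependence; this is controlled by invoking the product formula for $\sum_v\log\|F(P,Q)\|_v = 0$ and the fact that $\|\phi(R)\|_v = \max(\|f(R)\|_v,\|g(R)\|_v)$ globally recovers the projective height $h(\phi(R)) \geq 0$, so that the contributions telescope correctly after careful bookkeeping.
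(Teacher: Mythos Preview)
Your two–regime structure and the use of Lemma \ref{inTaylor} are exactly the right ingredients, and your Case~2 estimate $h_{Q,v}(P)\le 2\log\|Q\|_v+\log\epsilon_v'(2)$ is correct and matches the paper's far case. The gap is in Case~1. Your threshold $\max_i|u_i-v_i|_v\le 1$ is too coarse: it does not prevent $P$ from lying $v$-adically close to the indeterminacy locus of $\phi$, so there is no lower bound available for $\max(\|f(P)\|_v,\|g(P)\|_v)$. Consequently the residual term $-\log\|\phi(P)\|_v$ in your Case~1 local inequality cannot be absorbed. Your proposed remedy, summing via the product formula and $h(\phi(P))\ge 0$, does not work because the sum runs only over the Case~1 places $T_1\subsetneq M_k$; bounding the complementary sum $\sum_{v\notin T_1}\log\max(\|f(P)\|_v,\|g(P)\|_v)$ from above forces a term of order $d\sum_{v\notin T_1}\log\|P\|_v$, which can be as large as $d\,h(P)$ and so reintroduces exactly the $P$-dependence you are trying to exclude.

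The paper fixes this by choosing the threshold differently: the ``close'' case is
\[
\max\{|x-x_0|_v,|y-y_0|_v\}<\frac{1}{\epsilon_v((d+2)^42^{d+1})}\min_{j=1,2}\frac{|f_j(x_0,y_0,1)|_v}{|f_j|_v\max\{|x_0|_v,|y_0|_v,1\}^d},
\]
which is engineered precisely so that $|f_j(P)|_v\ge\frac{1}{\epsilon_v(2)}|f_j(Q)|_v$ for $j=1,2$. This replaces the dangerous $-\log\|\phi(P)\|_v$ by $-\log\|\phi(Q)\|_v+O(1)$, and now all residual terms depend only on $Q$ and can legitimately be summed using the product formula applied to $f_j(Q)$. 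In the complementary ``far'' case the threshold $C_v$ itself enters the bound for $h_{Q,v}(P)$, and $\sum_{v}\log C_v$ is again controlled by the product formula for $f_j(Q)$. A secondary simplification in the paper is to apply Lemma~\ref{inTaylor} once to $F=f_1-\alpha f_2$ (with $\alpha=\phi(Q)$, so $F(Q)=0$) rather than separately to $f$ and $g$; this avoids your cross-difference manipulation, though that part of your argument is not where the error lies.
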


\begin{proof}
Let $\phi=f_1/f_2$, where $f_1,f_2\in \O_k[x,y,z]$ are homogeneous polynomials of degree $d$.  Let $Q=(x_0,y_0,z_0), P=(x,y,z)$, and $\alpha=\phi(Q)$.  From the definitions,
\begin{align*}
h_{Q,v}(P)&=\log \frac{\max \{\|x_0\|_v,\|y_0\|_v,\|z_0\|_v\}\max \{\|x\|_v,\|y\|_v,\|z\|_v\}}{\max\{\|z_0x-x_0z\|_v,\|z_0y-y_0z\|_v,\|x_0y-y_0x\|_v\}},\\
h_{\alpha,v}(\phi(P))&=\log \frac{\max\{\|\alpha\|_v,1\}\max \{\|f_1(x,y,z)\|_v,\|f_2(x,y,z)\|_v\}}{\|f_1(x,y,z)-\alpha f_2(x,y,z)\|_v}.
\end{align*}
Without loss of generality, after permuting the variables, we can assume that $z_0\neq 0$ and $Q=(x_0,y_0,1)$.  If $z=0$, then
\begin{align*}
h_{Q,v}(P)&=\log \frac{\max \{\|x_0\|_v,\|y_0\|_v,1\}\max \{\|x\|_v,\|y\|_v\}}{\max\{\|x\|_v,\|y\|_v,\|x_0y-y_0x\|_v\}}\\
&\leq \log \max \{\|x_0\|_v,\|y_0\|_v,1\}.
\end{align*}
So
\begin{equation*}
\sum_{v\in T}h_{Q,v}(P)\leq \sum_{v\in T}\log \max \{\|x_0\|_v,\|y_0\|_v,1\}\leq h(Q).
\end{equation*}

Then using Lemma \ref{lhb}, in this case we have
\begin{equation*}
\sum_{v\in T}h_{Q,v}(P)\leq \sum_{v\in T}h_{\alpha,v}(P)+h(Q)+\log 2.
\end{equation*}

Suppose now that $z\neq 0$, in which case we can take $P=(x,y,1)$, for some $x,y\in k$.  

First suppose that
\begin{equation*}
\max\{|x-x_0|_v,|y-y_0|_v\}< \frac{1}{\epsilon_v((d+2)^42^{d+1})}\min_{j=1,2} \frac{|f_j(x_0,y_0,1)|_v}{|f_j|_v\max\{|x_0|_v,|y_0|_v,1\}^{d}}.
\end{equation*}
In particular, $\max\{|x-x_0|_v,|y-y_0|_v\}\leq 1$.  Let $F(u,v)=f_1(u,v,1)-\alpha f_2(u,v,1)$.  Note that $\deg F\leq d$.  From the definition of $\alpha$, $F(x_0,y_0)=0$.  Then by Lemma \ref{inTaylor}, with $a=x_0, b=y_0$, we have
\begin{equation}
\label{Feq}
|F(x,y)|_v\leq \epsilon_v((d+2)^42^{d})|F|_v\max\{|x_0|_v,|y_0|_v,1\}^{d}\max\{|x-x_0|_v,|y-y_0|_v\}.
\end{equation}
For $j=1,2$, using Lemma \ref{inTaylor} again, we find, if $v$ is archimedean,
\begin{align*}
|f_j(x,y,1)|_v&\geq |f_j(x_0,y_0,1)|_v-(d+2)^42^d|f_j|_v\max\{|x_0|_v,|y_0|_v,1\}^{d}\max\{|x-x_0|_v,|y-y_0|_v\}\\
&\geq \frac{1}{2}|f_j(x_0,y_0,1)|_v.
\end{align*}
By the same reasoning, if $v$ is nonarchimedean we have
\begin{equation*}
|f_j(x,y,1)-f_j(x_0,y_0,1)|_v<|f_j(x_0,y_0,1)|_v,
\end{equation*}
and so
\begin{equation*}
|f_j(x,y,1)|_v=|f_j(x_0,y_0,1)|_v, \quad j=1,2.
\end{equation*}
Then in any case,
\begin{equation*}
|f_j(x,y,1)|_v\geq \frac{1}{\epsilon_v(2)}|f_j(x_0,y_0,1)|_v, \quad j=1,2.
\end{equation*}

Since $\max\{|x-x_0|_v,|y-y_0|_v\}\leq 1$, we also have
\begin{equation*}
\max \{|x|_v,|y|_v,1\}\leq \epsilon_v(2)\max \{|x_0|_v,|y_0|_v,1\}.
\end{equation*}
Then
\begin{align*}
h_{Q,v}(P)&=\log \frac{\max \{\|x_0\|_v,\|y_0\|_v,1\}\max \{\|x\|_v,\|y\|_v,1\}}{\max\{\|x-x_0\|_v,\|y-y_0\|_v,\|x_0y-y_0x\|_v\}}\\
&\leq 2\log \max \{\|x_0\|_v,\|y_0\|_v,1\}+\log \epsilon_v'(2)-\log \max\{\|x-x_0\|_v,\|y-y_0\|_v\}
\end{align*}
and

\begin{align*}
h_{\alpha,v}(\phi(P))&=\log \frac{\max\{\|\alpha\|_v,1\}\max \{\|f_1(x,y,1)\|_v,\|f_2(x,y,1)\|_v\}}{\|f_1(x,y,1)-\alpha f_2(x,y,1)\|_v}\\
&= \log \max_{j=1,2} \|f_j(x,y,1)\|_v+\log \max\{\|\alpha\|_v,1\}-\log \|F(x,y)\|_v\\
&\geq \log \max_{j=1,2} \|f_j(x_0,y_0,1)\|_v+\log \max\{\|\alpha\|_v,1\}-\epsilon_v'(\log (d+2)^42^{d+1})\\
&-\log \|F\|_v- d\log \max\{\|x_0\|_v,\|y_0\|_v,1\}-\log\max\{\|x-x_0\|_v,\|y-y_0\|_v\}.
\end{align*}
by \eqref{Feq}.  We can estimate
\begin{align*}
|F|_v&=|f_1-\alpha f_2|_v\leq \epsilon_v(2)\max\{|f_1|_v,|f_2|_v\}\max \{|\alpha |_v,1\}.
\end{align*}
So
\begin{align*}
h_{\alpha ,v}(\phi(P))&\geq \log \max_{j=1,2} \|f_j(x_0,y_0,1)\|_v-\epsilon_v'(\log (d+2)^42^{d+2})-\log \max_{j=1,2}\|f_j\|_v\\
&- d\log \max\{\|x_0\|_v,\|y_0\|_v,1\}-\log\max\{\|x-x_0\|_v,\|y-y_0\|_v\}.
\end{align*}
Note that
\begin{equation*}
|f_j(x_0,y_0,1)|_v\leq \epsilon_v\left(\binom{d+2}{2}\right)|f_j|_v\max\{|x_0|_v,|y_0|_v,1\}^{d}, \quad j=1,2.
\end{equation*}
This implies that
\begin{align*}
&\sum_{v\in T}\log \frac {\max \{\|f_1(x_0,y_0,1)\|_v,\|f_2(x_0,y_0,1)\|_v\}}{\max\{\|f_1\|_v,\|f_2\|_v\}\max\{\|x_0\|_v,\|y_0\|_v,1\}^{d}}
 \\
&\geq \sum_{j=1}^2\sum_{v\in M_k}\log \frac {\|f_j(x_0,y_0,1)\|_v}{\|f_j\|_v\max\{\|x_0\|_v,\|y_0\|_v,1\}^{d}}-2\log \epsilon_v'\left(\binom{d+2}{2}\right)\\
&\geq -2dh(Q)-4\log (d+2)
\end{align*}
by the product formula.  So
\begin{align*}
\sum_{v\in T}h_{\alpha ,v}(\phi(P))\geq -2dh(Q)-8\log (d+2)-(d+2)\log 2-\sum_{v\in T}\log\max\{\|x-x_0\|_v,\|y-y_0\|_v\}.
\end{align*}


Then
\begin{equation*}
\sum_{v\in T}h_{Q,v}(P)\leq \sum_{v\in T} h_{\alpha ,v}(\phi(P))+(2d+2)h(Q)+8\log(d+2)+(d+3)\log 2.
\end{equation*}

Finally, suppose that
\begin{equation*}
\max\{|x-x_0|_v,|y-y_0|_v\}\geq C_v,
\end{equation*}
where
\begin{equation*}
C_v=\frac{1}{\epsilon_v((d+2)^42^{d+1})}\min\left\{ \frac{|f_1(x_0,y_0,1)|_v}{|f_1|_v\max\{|x_0|_v,|y_0|_v,1\}^{d}},\frac{|f_2(x_0,y_0,1)|_v}{|f_2|_v\max\{|x_0|_v,|y_0|_v,1\}^{d}}\right\}.
\end{equation*}
As noted before, $C_v\leq 1$.  Then one easily finds that
\begin{align*}
\frac{\max \{|x|_v,|y|_v,1\}}{\max\{|x-x_0|_v,|y-y_0|_v\}}&=\frac{\max \{|(x-x_0)+x_0|_v,|(y-y_0)+y_0|_v,1\}}{\max\{|x-x_0|_v,|y-y_0|_v\}}\\
&\leq \frac{\epsilon_v(2)\max\{|(x-x_0)|_v,|(y-y_0)|_v,|x_0|_v,|y_0|_v,1\}}{\max\{|x-x_0|_v,|y-y_0|_v\}}\\
&\leq \frac{\epsilon_v(2)\max\{|x_0|_v,|y_0|_v,1\}}{C_v}.
\end{align*}
So 
\begin{align*}
h_{Q,v}(P)&=\frac{[k_v:\mathbb{Q}_v]}{[k:\mathbb{Q}]}\log \frac{\max \{|x_0|_v,|y_0|_v,1\}\max \{|x|_v,|y|_v,1\}}{\max\{|x-x_0|_v,|y-y_0|_v,|x_0y-y_0x|_v\}}\\
&\leq \frac{[k_v:\mathbb{Q}_v]}{[k:\mathbb{Q}]}\log \epsilon_v(2)\max \{|x_0|_v,|y_0|_v,1\}^2/C_v\\
&\leq \frac{[k_v:\mathbb{Q}_v]}{[k:\mathbb{Q}]}(2\log \max \{|x_0|_v,|y_0|_v,1\}+\log \epsilon_v(2)-\log C_v).
\end{align*}
Then using Lemma \ref{lhb}, we find
\begin{equation*}
\sum_{v\in T} h_{Q,v}(P)\leq   \sum_{v\in T} h_{\alpha ,v}(\phi(P))+2h(Q)+2\log 2-\sum_{v\in T}\frac{[k_v:\mathbb{Q}_v]}{[k:\mathbb{Q}]}\log C_v.
\end{equation*}
Since
\begin{align*}
\sum_{v\in T}\frac{[k_v:\mathbb{Q}_v]}{[k:\mathbb{Q}]}\log C_v&\geq \sum_{v\in M_k}\frac{[k_v:\mathbb{Q}_v]}{[k:\mathbb{Q}]}\log C_v\\
&\geq \sum_{j=1}^2\sum_{v\in M_k}\log \frac{\|f_j(x_0,y_0,1)\|_v}{\epsilon_v'((d+2)^42^{d+1})\|f_j\|_v\max\{\|x_0\|_v,\|y_0\|_v,1\}^{d}}\\
&\geq -8\log(d+2)-(2d+2)\log 2-2dh(Q),
\end{align*}
where we have used the product formula in the last line,
we obtain
\begin{equation*}
\sum_{v\in T} h_{Q,v}(P)\leq \sum_{v\in T} h_{\alpha ,v}(\phi(P))+(2d+2)h(Q)+(2d+4)\log 2+8\log (d+2).
\end{equation*}
\end{proof}

\section{Explicit Results for $\mathbb{P}^2$}
\label{PP}

In this section we give a proof of Theorem \ref{thP2}.  The proof will follow the proof in Section \ref{SP}, except that we will give explicit estimates at each step.  We begin with an explicit version of Theorem \ref{B2}.

\begin{theorem}
\label{B3}
Let $k$ be a number field of degree $\delta$ and discriminant $\Delta$.  Let $S$ be a finite set of places of $k$, containing the archimedean places, of cardinality $s$ .  Let $C_1$ and $C_2$ be distinct curves over $k$ in $\mathbb{P}^2$ defined by homogeneous polynomials $f_1,f_2\in \O_k[x,y,z]$, respectively, of degrees $d_1$ and $d_2$, respectively.  Let $d=\max\{d_1,d_2\}$ and $\phi=\frac{f_1^{d_2}(x,y,z)}{f_2^{d_1}(x,y,z)}$, a rational function on $\mathbb{P}^2$.  Let $Q\in \mathbb{P}^2(\kbar)\setminus (C_1\cup C_2)$ and let $\delta'=[k(Q):\mathbb{Q}]$.  Let $w\in M_{k(Q)}$ and let $0<\epsilon<1$.  Then for all $P\in (\mathbb{P}^2\setminus (C_1\cup C_2))(\O_{k,S})$, either
\begin{equation}
\label{eh}
h_{Q,w}(P)\leq \epsilon h(P)+c_4(\epsilon,k,S,w,Q,C_1,C_2)
\end{equation}
or
\begin{equation*}
\phi(P)=\phi(Q),
\end{equation*}
where
\begin{align*}
c_4&=(2d^2+2)h(Q)+10\log(d^2+2)+(2d^2+7)\log 2+\frac{1}{d}\left(h(C_1)+h(C_2)\right)+\frac{\log |\Delta|}{\delta d^2}+\frac{2\delta}{d^2} c_3(k)+c_5,\\
c_5&=6.4(d^2c_2(\delta',s)/\epsilon)\frac{N(w)}{\log N(w)}c_6c_7\max \{\log((d^2c_2(\delta',s)/\epsilon)N(w)),\log^* c_6\},\\
c_6&=Q_S\left(1+s c_3(k,S)+\frac{1}{\delta}\log |\Delta|\right),\\
c_7&=d^2h(Q)+dh(C_1)+dh(C_2)+\frac{1}{\delta}\log |\Delta|+2\delta c_3(k)+2d^2\log 2+2\log (d^2+2).
\end{align*}
In particular, \eqref{eh} holds for all $P\in (\mathbb{P}^2\setminus (C_1\cup C_2))(\O_{k,S})$ outside of an effectively computable finite union of plane curves $Z$.
\end{theorem}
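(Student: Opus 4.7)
The plan is to make every step of the proof of Theorem \ref{B2} explicit using the inequalities assembled in the previous section. Throughout I would work over $L=k(Q)$ of degree $\delta'$, viewing $P\in\mathbb{P}^2(L)$, and the argument splits into a height comparison through $\phi$, extraction of an $S$-unit from $\phi(P)$, and an application of the effective Baker bound.

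The first step applies Lemma \ref{lemn} to $\phi=f_1^{d_2}/f_2^{d_1}$, a rational map of degree at most $d^2$. Under the hypothesis $\phi(P)\neq\phi(Q)$, this yields
\begin{equation*}
h_{Q,w}(P)\leq h_{\phi(Q),w}(\phi(P))+(2d^2+2)h(Q)+8\log(d^2+2)+(2d^2+4)\log 2,
\end{equation*}
accounting for the $(2d^2+2)h(Q)$ term and most of the logarithmic constants in $c_4$.

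The second step is to locate $\phi(P)$ in an explicit, finitely generated multiplicative group. Since $P$ is $S$-integral on $\mathbb{P}^2\setminus(C_1\cup C_2)$ and $\Supp\dv(\phi)\subset C_1\cup C_2$, the identity $\|f_i(P)\|_v=\|f_i\|_v\max_j\|x_j\|_v^{d_i}$ for $v\notin S$ (with $P$ given integral coordinates via Lemma \ref{arch}\eqref{c1}) bounds $\sum_{v\notin S}h_v(\phi(P))$ and $\sum_{v\in S}h_v(1/\phi(P))$ using only $h(C_1)$, $h(C_2)$, and $\log|\Delta|/\delta$, via the product formula and the hypothesis $f_i\in\mathcal{O}_k[x,y,z]$. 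Theorem \ref{abl} then produces a factorization $\phi(P)=\beta u$ with $u\in\mathcal{O}_{k,S}^*$ and $h(\beta)$ bounded by the same data; letting $G=\mathcal{O}_{k,S}^*\cdot\langle\beta\rangle\subset L^*$ (of rank at most $s$), I obtain $Q_G\leq Q_S\max\{h(\beta),1\}\leq c_6$.

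Finally, Theorem \ref{Baker} applied to $\phi(P)\in G$, $\alpha=\phi(Q)$, with $\epsilon$ replaced by $\epsilon/d^2$, gives
\begin{equation*}
h_{\phi(Q),w}(\phi(P))\leq \frac{\epsilon}{d^2}h(\phi(P))+c_1(\epsilon/d^2,L,G,w,\phi(Q))+\log 2,
\end{equation*}
whose right-hand constant has the structure of $c_5$ once $Q_G$ is bounded by $c_6$ and $\max\{h(\phi(Q)),1\}$ by $c_7$. Lemmas \ref{rh} and \ref{ph} give $h(\phi(Q))\leq d^2h(Q)+dh(C_1)+dh(C_2)+2d^2\log 2+2\log(d^2+2)$; the remaining $\log|\Delta|/\delta+2\delta c_3(k)$ terms in $c_7$ come from Lemma \ref{arch}\eqref{c1} applied to $Q$. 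The same Lemma \ref{rh} bound on $h(\phi(P))$, multiplied by $\epsilon/d^2$ (and using $\epsilon<1$), contributes $\epsilon h(P)+\frac{1}{d}(h(C_1)+h(C_2))$ plus small terms, which raises the log coefficients in $c_4$ from $(8,2d^2+4)$ up to $(10,2d^2+7)$. The exceptional locus $\{\phi=\phi(Q)\}$ is cut out by $f_1^{d_2}-\phi(Q)f_2^{d_1}=0$, a plane curve of degree $d_1d_2$. The main obstacle is the bookkeeping in the second step: controlling $h(\beta)$ and hence $Q_G$ purely in terms of $k, S, C_1, C_2$ with no dependence on $P$ — this is what forces the factor $1+sc_3(k,S)+\log|\Delta|/\delta$ in $c_6$.
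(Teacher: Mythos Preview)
Your overall architecture is correct — Lemma \ref{lemn}, then Theorem \ref{abl} to extract an $S$-unit, then Theorem \ref{Baker}, then Lemmas \ref{rh} and \ref{ph} — and matches the paper. The gap is in the normalization step. You apply Lemma \ref{arch}\eqref{c1} to $P$ and to $Q$; the paper instead applies it to the \emph{polynomials} $g_1=f_1^{d_2}$ and $g_2=f_2^{d_1}$, rescaling each so that the ideal of $\O_k$ generated by its coefficients has norm at most $\sqrt{|\Delta|}$. This is the step that makes your second paragraph work: for $P$ integral and $v\notin S$ one has $\|\phi(P)\|_v=\|g_1\|_v/\|g_2\|_v$, so the fractional ideal $\phi(P)\O_k$ equals $(I_1/I_2)J$ with $J$ supported on $S$ and $N(I_i)\leq\sqrt{|\Delta|}$; Theorem \ref{abl} then gives $h(\beta)\leq sc_3(k,S)+\frac{1}{\delta}\log|\Delta|$ and hence $Q_G\leq c_6$. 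Without this rescaling, $\sum_{v\notin S}|\log\|f_1\|_v^{d_2}/\|f_2\|_v^{d_1}|$ depends on the content of the given $f_1,f_2$, which is \emph{not} controlled by $h(C_1),h(C_2)$ (the height is scale-invariant, the content is not), so your claimed bound ``using only $h(C_1),h(C_2)$ and $\log|\Delta|/\delta$'' does not follow, and you would not recover the stated $c_6$ with no $h(C_i)$-dependence.

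The same rescaling is the true source of the $\frac{1}{\delta}\log|\Delta|+2\delta c_3(k)$ terms in $c_7$: once $g_1,g_2$ are normalized via Lemma \ref{arch}\eqref{c1}, one bounds $h(\phi)=\sum_v\log\max\{\|g_1\|_v,\|g_2\|_v\}$ by restricting to the archimedean places (the nonarchimedean contributions are $\leq 0$ since $g_i\in\O_k[x,y,z]$) and invoking the archimedean estimate in Lemma \ref{arch}\eqref{c1}; this yields $h(\phi)\leq h(f_1^{d_2})+h(f_2^{d_1})+\frac{1}{\delta}\log|\Delta|+2\delta c_3(k)$, and Lemma \ref{ph} finishes. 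These terms do not come from choosing coordinates for $Q$ — indeed Lemma \ref{rh} uses $h(Q)$ directly, with no coordinate choice needed. So the fix is a single relocation: move Lemma \ref{arch}\eqref{c1} from $P,Q$ to $g_1,g_2$, and both $c_6$ and $c_7$ fall out as stated.
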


\begin{proof}
Let $I_1$ and $I_2$ be the ideals of $\O_k$ generated by the coefficients of $g_1=f_1^{d_2}$ and $g_2=f_2^{d_1}$, respectively.  We rescale $g_1$ and $g_2$ as in Lemma \ref{arch} \eqref{c1} and its proof (viewing the coefficients of the polynomials as giving points in projective space).   In particular, $N(I_1),N(I_2)\leq \sqrt{|\Delta|}$.   Let $\phi=\frac{g_1}{g_2}$ and let $P\in (\mathbb{P}^2\setminus (C_1\cup C_2))(\O_{k,S})$.  Then it follows from the definitions that we have an equality of fractional ideals $\phi(P)\O_{k}=\frac{I_1}{I_2}J$, where $J$ is a fractional ideal supported on the primes in $S$.  By Theorem \ref{abl}, we can write $\phi(P)=\beta u$, where $u\in \O_{k,S}^*$ and 
\begin{equation*}
h(\beta)\leq s c_3(k,S)+\frac{1}{\delta}\log N(I_1)+\frac{1}{\delta}\log N(I_2) \leq s c_3(k,S)+\frac{1}{\delta}\log |\Delta|.
\end{equation*}

Let $\alpha=\phi(Q)$ and suppose that $\phi(P)\neq \alpha$.  By Theorem \ref{Baker}, substituting $\frac{\epsilon}{d^2}$ for $\epsilon$ and taking $G$ to be the multiplicative group generated by $\beta$ and $\O_{k,S}^*$, we have the inequality
\begin{align*}
h_{\alpha,w}(\phi(P))&\leq \frac{\epsilon}{d^2} h(\phi(P))+c_1\left(\frac{\epsilon}{d^2}, k(Q),G,w,\alpha\right)+\log 2.
\end{align*}

Note that $\deg \phi\leq d^2$.  By Lemma \ref{lemn},

\begin{align*}
h_{Q,w}(P)&\leq h_{\alpha,w}(\phi(P))+(2d^2+2)h(Q)+8\log(d^2+2)+(2d^2+4)\log 2\\
&\leq \frac{\epsilon}{d^2} h(\phi(P))+(2d^2+2)h(Q)+8\log(d^2+2)+(2d^2+5)\log 2+c_1\left(\frac{\epsilon}{d^2}, k(Q),G,w,\alpha\right).
\end{align*}
By Lemma \ref{rh},
\begin{equation*}
h(\phi(P))\leq d^2h(P)+h(\phi)+\log\binom{d^2+2}{2}\leq d^2h(P)+h(\phi)+2\log (d^2+2).
\end{equation*}
Let $s_\infty$ be the number of archimedean places of $k$.  By Lemma \ref{arch} \eqref{c1} and the construction of $g_1$ and $g_2$,
\begin{align*}
h(\phi)&=\sum_{v\in M_k}\max\{\|g_1\|_v,\|g_2\|_v\}\leq \sum_{\substack{v\in M_k\\v|\infty}}\max\{\|g_1\|_v,\|g_2\|_v\}\\
&\leq s_\infty\left(\frac{1}{s_\infty}h(f_1^{d_2})+\frac{1}{2\delta s_\infty}\log |\Delta|+c_3(k)+\frac{1}{s_\infty}h(f_2^{d_1})+\frac{1}{2\delta s_\infty}\log |\Delta|+c_3(k)\right)\\
&\leq h(f_1^{d_2})+h(f_2^{d_1})+\frac{1}{\delta}\log |\Delta|+2\delta c_3(k).
\end{align*}
By Lemma \ref{ph},
\begin{equation*}
h(f_1^{d_2})+h(f_2^{d_1})\leq dh(f_1)+dh(f_2)+2d^2\log 2=dh(C_1)+dh(C_2)+2d^2\log 2.
\end{equation*}
So
\begin{equation*}
h(\phi)\leq dh(C_1)+dh(C_2)+\frac{1}{\delta}\log |\Delta|+2\delta c_3(k)+2d^2\log 2.
\end{equation*}

Then
\begin{multline*}
h_{Q,w}(P)<\epsilon h(P)+(2d^2+2)h(Q)+10\log(d^2+2)+(2d^2+7)\log 2\\
+\frac{1}{d}\left(h(C_1)+h(C_2)\right)+\frac{1}{\delta d^2}\log |\Delta|+\frac{2\delta}{d^2} c_3(k)+ c_1\left(\frac{\epsilon}{d^2}, k(Q),G,w,\alpha\right).
\end{multline*}
Finally, we can estimate the last term using
\begin{equation*}
Q_G\leq Q_S \max\{h(\beta),1\}\leq Q_S\left(1+s c_3(k,S)+\frac{1}{\delta}\log |\Delta|\right)
\end{equation*}
and, using Lemma \ref{rh} again,
\begin{align*}
h(\alpha)&\leq d^2h(Q)+h(\phi)+2\log (d^2+2)\\
&\leq d^2h(Q)+dh(C_1)+dh(C_2)+\frac{1}{\delta}\log |\Delta|+2\delta c_3(k)+2d^2\log 2+2\log (d^2+2).
\end{align*}

\end{proof}

We now prove Theorem \ref{thP2}.

\begin{proof}[Proof of Theorem \ref{thP2}]
Let $d_i=\deg C_i$, $i=1,\ldots, n$.  Let $P\in \left(\mathbb{P}^2\setminus \cup_{i=1}^nC_i\right)(\O_{k,S})$.  Then
\begin{equation*}
\sum_{v\in S}h_{C_i,v}(P)=d_ih(P), \quad i=1,\ldots, n.
\end{equation*}
So for each $i$, there exists a place $v\in S$ such that $h_{C_i,v}(P)\geq \frac{1}{s}h(P)$.  Since $s<n$, there exists a place $v\in S$ and distinct elements $i,j\in\{1,\ldots, n\}$ such that
\begin{equation*}
\min \{h_{C_i,v}(P),h_{C_j,v}(P)\}\geq \frac{1}{s}h(P).
\end{equation*}
The theorem is then a consequence of the following lemma.

\begin{lemma}
Let $k$ be a number field of degree $\delta$ and discriminant $\Delta$.  Let $S$ be a finite set of places of $k$, containing the archimedean places, of cardinality $s$.  Let $C_1, \ldots, C_n\subset \mathbb{P}^2$ be distinct curves over $k$ such that at most $n-2$ of the curves $C_i$ intersect at any point of $\mathbb{P}^2(\kbar)$.  Let $d_i=\deg C_i$, $d=\max_i d_i$, $h=\max_i h(C_i)$, and $N=\max_{v\in S}N(v)$.  Let $Z'$ be the set from Theorem \ref{thP2}.  Let $0<\epsilon<1$ and $v\in S$.  Then any point $P\in \left(\mathbb{P}^2\setminus \cup_{i=1}^nC_i\right)(\O_{k,S})$ with
\begin{equation*}
\min \{h_{C_1,v}(P),h_{C_2,v}(P)\}\geq \epsilon h(P)
\end{equation*}
satisfies either $P\in Z'$ or
\begin{equation*}
h(P)<2^{20s+4\delta+75}d^{6s+34}\delta^{5s+8\delta-3}s^{4s-1}N^{d^2}(\log^* N)^{2s}|\Delta|^{3/2}(\log^*|\Delta|)^{3\delta}(h+1)/\epsilon^3.
\end{equation*}
\end{lemma}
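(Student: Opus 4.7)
The plan is to follow the strategy of the inner lemma in the proof of Theorem \ref{mtheorem}, replacing the qualitative tools (Lemmas \ref{L3}--\ref{L5} and Theorem \ref{B2}) by the explicit estimates of the previous section, chiefly Theorem \ref{B3}. The entire difficulty is quantitative bookkeeping.

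First, pass to the finite extension $k'/k$ generated by the finitely many points of $(C_1\cap C_2)(\kbar)$ and choose a place $w$ of $k'$ above $v$, so that Lemma \ref{L5} applies with $k'$-rational intersection points. Since $|C_1\cap C_2|\leq d^2$ and $[k':k]\leq d^2!$, one obtains an effective integer $N_0\leq d^2$ and the inequality
\[
\min\{h_{C_1,v}(P),h_{C_2,v}(P)\} \leq N_0\sum_{Q\in(C_1\cap C_2)(\kbar)} h_{Q,w}(P)+O(1),
\]
with at most $d^2$ terms in the sum.

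For each $Q$, the hypothesis that at most $n-2$ of the $C_i$ pass through $Q$ produces indices $i=i(Q), j=j(Q)$ with $Q\notin C_i\cup C_j$, so that $\phi_Q:=f_i^{d_j}/f_j^{d_i}$ lies in $\Phi_Q$. If $P\notin Z'$, then for each such $Q$ one can pick $\phi_Q$ with $\phi_Q(P)\neq\phi_Q(Q)$, and Theorem \ref{B3} applied to $(C_i,C_j,Q,w)$ with its $\epsilon$ replaced by $\epsilon/(2N_0d^2)$ gives
\[
h_{Q,w}(P)\leq\frac{\epsilon}{2N_0d^2}\,h(P)+c_4(Q),
\]
where $c_4(Q)$ is explicit in $h(Q), h(C_i), h(C_j), k', w$, and $\epsilon$. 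Summing over $Q$ and using the hypothesis $\min\{h_{C_1,v}(P),h_{C_2,v}(P)\}\geq\epsilon h(P)$, half the leading $h(P)$ on the right is absorbed, leaving $(\epsilon/2)h(P)\leq N_0\sum_Q c_4(Q)+O(1)$. The sum $\sum_Q h(Q)$ that appears inside $c_4(Q)$ is bounded effectively by the arithmetic B\'ezout theorem (Theorem \ref{TB}), which gives $\sum_Q h(Q)\leq d_1h(C_2)+d_2h(C_1)+4d_1d_2\leq 2dh+4d^2$. All remaining constants in $c_4,c_5,c_6,c_7$ of Theorem \ref{B3} are bounded using \eqref{Rest}, \eqref{Qest}, \eqref{c3est1}, \eqref{c3est2}, together with Theorem \ref{tSil} to handle $\log|\Delta|$.

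The main obstacle is purely the bookkeeping. Tracking the Baker-type constant through the extension $k'/k$, one must verify that the resulting inflation of $\delta,|\Delta|,s,N(w),R_S,Q_S,c_3(k,S)$ is absorbable into the target bound. The factor $N^{d^2}$ traces to $N(w)\leq N(v)^{d^2}$ from the residue degree in $k'/k$; the factor $(h+1)$, rather than a larger power of $h$, comes from the dominant $d h(Q)+d h(C_i)+d h(C_j)$ terms in $c_7$ combined with the B\'ezout estimate; and the $1/\epsilon^3$ is produced by the explicit $1/\epsilon$ in $c_5$, the rescaling $\epsilon\mapsto\epsilon/(2N_0d^2)$, and the final inversion $h(P)\leq(2/\epsilon)\sum_Q c_4(Q)$, using the elementary bound $\log(1/\epsilon)\leq 1/\epsilon$ for $0<\epsilon<1$.
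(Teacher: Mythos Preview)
Your overall architecture matches the paper's: reduce $\min\{h_{C_1,v},h_{C_2,v}\}$ to a sum of local heights $h_{Q,w}$ over the points of $C_1\cap C_2$, then apply Theorem~\ref{B3} to each $Q$ and sum using arithmetic B\'ezout. But one step is genuinely missing, and it is not mere bookkeeping.

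You invoke Lemma~\ref{L5} to obtain
\[
\min\{h_{C_1,v}(P),h_{C_2,v}(P)\} \leq N_0\sum_{Q} h_{Q,w}(P)+O(1)
\]
and then never return to that $O(1)$. For the present lemma the constant must be made \emph{completely explicit} in $d,h,\delta,\Delta$, and this is exactly what the paper spends most of the proof doing via the effective Hilbert Nullstellensatz. Concretely, the paper writes $a\bigl(\prod_l g_l\bigr)^M = a_1f_1+a_2f_2$ with $M\leq(8d)^8$ and explicit height bounds on $a,a_1,a_2$ (of the shape $(8d)^{15}(h_\infty+8d\log 8d)$), then unwinds this identity at the place $w$ to get an explicit form of the inequality with the ``$O(1)$'' split into several terms, each of which is bounded separately (for instance $-r_{w/v}\log\|a\|_w$ requires its own argument using the product formula and a choice of $w$). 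These terms contribute quantities of size roughly $2^{51}d^{21}\delta^2(h+1)$ to the right-hand side and are not automatically dominated by the Baker term. Your claim that $N_0\leq d^2$ also short-circuits this: the multiplier in the paper is $M\leq(8d)^8$, coming from the Nullstellensatz exponent, not from the scheme length of $C_1\cap C_2$.

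A secondary point: passing to the compositum $k'=k(Q_1,\ldots,Q_r)$ is both unnecessary and dangerous. Theorem~\ref{B3} is stated with $w\in M_{k(Q)}$ and $\delta'=[k(Q):\mathbb{Q}]\leq d^2\delta$; the paper works one point at a time over $k(Q_l)$. Your phrasing ``tracking the Baker-type constant through the extension $k'/k$'' suggests you intend to apply the Baker bound over $k'$, but $[k':k]$ can be as large as $(d^2)!$, and feeding that into $c_2(\delta',s)=36(16e\delta')^{3s+5}(\log^*\delta')^2$ would overshoot the target bound by a factorial factor. Apply Theorem~\ref{B3} over each $k(Q_l)$ separately, as the paper does, and you avoid this issue entirely.
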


\begin{proof}
Let $(C_1\cap C_2)(\kbar)=\{Q_1,\ldots, Q_r\}\subset \mathbb{P}^2(\kbar)$ and let $Q_i=(x_i,y_i,z_i)$, $x_i,y_i,z_i\in \O_{k(Q_i)}$, $i=1,\ldots, r$, where $r\leq d^2$.  Let $L=k(Q_1,\ldots, Q_r)$.  We note that $[k(Q_i):k]\leq d^2$ for all $i$.  Let $C_i$ be defined by $f_i\in \O_k[x,y,z]$, $i=1,\ldots, n$, and let
\begin{equation*}
h_\infty=\log \max_{\substack{w\in M_L\\ w|\infty}} \left\{|f_1|_w, |f_2|_w, \max \left|\prod_{i=1}^rg_i\right|_w\right\},
\end{equation*}
where the max is taken over all possible choices of 
\begin{equation*}
g_i\in \{z_ix-x_iz,z_iy-y_iz,x_iy-y_ix\}\subset \O_L[x,y,z], \quad i=1,\ldots, r.  
\end{equation*}
Now fix a choice of $g_i\in \{z_ix-x_iz,z_iy-y_iz,x_iy-y_ix\}$, $i=1,\ldots, r$.  Since $\prod_{i=1}^rg_i$ vanishes at all the points $Q_i$, by the effective Hilbert Nullstellensatz (see Remark~\ref{RHN}), there exists a positive integer $M$, homogeneous polynomials $a_1, a_2\in \O_L[x,y,z]$ with $\deg a_1= rM-\deg f_1, \deg a_2=rM-\deg f_2$, and a constant $a\in \O_L$ such that
\begin{equation*}
f_1(x,y,z)a_1(x,y,z)+f_2(x,y,z)a_2(x,y,z)=a\left(\prod_{i=1}^r g_i\right)^M
\end{equation*}
and
\begin{align*}
M&\leq (8d)^8,\\
\log \max_{\substack{w\in M_L\\ w|\infty}} \{|a_1|_w,|a_2|_w, |a|_w\}&\leq (8d)^{15}(h_\infty+8d\log 8d).
\end{align*}
Let $w$ be a place of $L$ lying above $v$ (we will choose a specific such $w$ later).  Let $x,y,z\in k$.  It follows that there exists $a_1, a_2, a$, and $M$, as above, such that
\begin{align*}
&\left(\prod_{i=1}^r \max\{|z_ix-x_iz|_w,|z_iy-y_iz|_w,|x_iy-y_ix|_w\right)^M\\
&\qquad \qquad=\frac{1}{|a|_w}|f_1(x,y,z)a_1(x,y,z)+f_2(x,y,z)a_2(x,y,z)|_w\\
&\qquad \qquad\leq 2\max\{|f_1(x,y,z)a_1(x,y,z)|_w,|f_2(x,y,z)a_2(x,y,z)|_w\}/|a|_w\\
&\qquad \qquad\leq 2(rM)^2\max_{i=1,2}\left\{|f_i(x,y,z)|_w|a_i|_w\max\{|x|_w,|y|_w,|z|_w\}^{rM-\deg f_i}\right\}/|a|_w.
\end{align*}
So
\begin{multline*}
\left(\prod_{i=1}^r \frac{\max\{|z_ix-x_iz|_w,|z_iy-y_iz|_w,|x_iy-y_ix|_w}{\max\{|x|_w,|y|_w,|z|_w\}}\right)^M\leq\\ \frac{2(rM)^2}{|a|_w}\max\{|a_1|_w,|a_2|_w\}\max_{i=1,2} \frac{|f_i(x,y,z)|_w}{\max\{|x|_w,|y|_w,|z|_w\}^{\deg f_i}}.
\end{multline*}

Let $r_{w/v}=[L:k]/[L_w:k_v]$.  Taking logarithms, rearranging, and using the definitions and inequalities above, we find
\begin{multline}
\epsilon h(P)\leq \min\{h_{C_1,v}(P), h_{C_2,v}(P)\}=r_{w/v}\min\{h_{C_1,w}(P), h_{C_2,w}(P)\}\label{key}\\
\leq Mr_{w/v}\sum_{i=1}^rh_{Q_i,w}(P)-Mr_{w/v}\sum_{i=1}^r\log\max\{\|x_i\|_w,\|y_i\|_w, \|z_i\|_w\}+\log 2(rM)^2\\
\quad+r_{w/v}\log\max\{\|f_1\|_w,\|f_2\|_w\}+r_{w/v}\log\max\{\|a_1\|_w,\|a_2\|_w\}-r_{w/v}\log\|a\|_w.
\end{multline}

Let $Q_l\in (C_1\cap C_2)(\kbar)$.  Then by assumption, there exists $i,j\in\{1,\ldots,n\}$, $i\neq j$, such that $Q_l\not\in C_i\cup C_j$.  Let $w_l$ be the place of $k(Q_l)$ lying below $w$ and let $r_{w_l/v}=[k(Q_l):k]/[k(Q_l)_{w_l}:k_v]$.  Let $\Phi_P$ and $Z'$ be as in Theorem \ref{thP2}.

By Theorem \ref{B3}, either
\begin{equation*}
P\in \bigcap_{\phi\in \Phi_{Q_l}} \{Q\in X(\kbar): \phi(Q)=\phi(Q_l)\}\subset Z'
\end{equation*}
or
\begin{equation*}
r_{w/v}h_{Q_l,w}(P)=r_{w_l/v}h_{Q_l,w_l}(P)< \frac{\epsilon}{2rM} h(P)+\max_{i,j}c_4\left(\frac{\epsilon}{2rMr_{w_l/v}},k,S,w_l,Q_l,C_i,C_j\right)
\end{equation*}
for all $P\in \left(\mathbb{P}^2\setminus \cup_{i=1}^nC_i\right)(\O_{k,S})=\cap_{i,j} (\mathbb{P}^2\setminus (C_i\cup C_j))(\O_{k,S})$.

Suppose now that $P\not\in Z'$.  Summing over all points in $C_1\cap C_2$, we obtain
\begin{equation*}
Mr_{w/v}\sum_{l=1}^rh_{Q_l,w}(P)< \frac{\epsilon}{2} h(P)+\sum_{l=1}^r\max_{i,j}c_4\left(\frac{\epsilon}{2rMr_{w_l/v}},k,S,w_l,Q_l,C_i,C_j\right).
\end{equation*}
Substituting into \eqref{key} we find that
\begin{multline}
\label{kin}
h(P)<\frac{2}{\epsilon}\left(\sum_{l=1}^r\max_{i,j}c_4\left(\frac{\epsilon}{2rMr_{w_l/v}},k,S,w_l,Q_l,C_i,C_j\right)+\log 2(rM)^2+r_{w/v}\log\max_{i=1,2}\|f_i\|_w\right.\\
\left. +r_{w/v}\log\max_{i=1,2}\|a_i\|_w-Mr_{w/v}\sum_{i=1}^r\log\max\{\|x_i\|_w,\|y_i\|_w, \|z_i\|_w\}-r_{w/v}\log\|a\|_w\right).
\end{multline}
We now estimate all the terms on the right-hand side.  The dominant term, which comes from the first sum above, is 
\begin{equation}
\label{mt}
\sum_{l=1}^r\max_{i,j}c_5\left(\frac{\epsilon}{2rMr_{w_l/v}},k,S,w_l,Q_l,C_i,C_j\right).
\end{equation}
We estimate this term first.  We note that by \eqref{Rest}, \eqref{Qest}, and \eqref{c3est1},
\begin{equation*}
c_6(k,S)\leq 2^{3s+3}s^{4s-3}\delta^{2s+2\delta-5}(\log^* N)^{2s-\delta} |\Delta|(\log^*|\Delta|)^{2\delta-2}.
\end{equation*}
Then
\begin{align*}
2rMr_{w_l/v}d^2c_2(d^2\delta,s)N(w_l)c_6(k,S) &=72rMd^2r_{w_l/v}N(w_l)(16ed^2\delta)^{3s+5}(\log^* d^2\delta)^2c_6(k,S)\\
&\leq 2^{20s+61}d^{6s+28}\delta^{5s+2\delta+2}s^{4s-3}N^{d^2}(\log^* N)^{2s-\delta}|\Delta|(\log^*|\Delta|)^{2\delta-2}.
\end{align*}
Simple estimates then also give
\begin{equation*}
\log 2rMr_{w_l/v}d^2c_2(d^2\delta,s)N(w_l)c_6(k,S)/\epsilon\leq 2^7s^2d^2(\log^*N)(\log^*|\Delta|)/\epsilon.
\end{equation*}
We have $\sum_{l=1}^rh(Q_l)\leq d(h(C_1)+h(C_2)+4d)\leq 4d^2(h+1)$, by Theorem \ref{TB}, and
\begin{align*}
\sum_{l=1}^r\max_{i,j}c_7\left(k,Q_l,C_i,C_j\right)&\leq \sum_{l=1}^r(d^2h(Q_l)+2dh+2d^2\log 2+2\log (d^2+2)+\frac{1}{\delta}\log |\Delta|+2\delta c_3(k))\\
&\leq 2^{4\delta+2} \delta^{6\delta-5}\sqrt{|\Delta|}(\log^*|\Delta|)^\delta d^4(h+1).
\end{align*}

Then \eqref{mt} is bounded by
\begin{equation}
\label{kb}
2^{20s+4\delta+73}d^{6s+34}\delta^{5s+8\delta-3}s^{4s-1}N^{d^2}(\log^* N)^{2s}|\Delta|^{3/2}(\log^*|\Delta|)^{3\delta}(h+1)/\epsilon^2.
\end{equation}

In the remainder of the proof, we will show that the sum of the remaining elements in the parentheses on the right-hand side of \eqref{kin} can also be bounded by this quantity.  Thus, we find that
\begin{equation*}
h(P)\leq 2^{20s+4\delta+75}d^{6s+34}\delta^{5s+8\delta-3}s^{4s-1}N^{d^2}(\log^* N)^{2s}|\Delta|^{3/2}(\log^*|\Delta|)^{3\delta}(h+1)/\epsilon^3,
\end{equation*}
proving the lemma.

First, we handle the remaining terms coming from the first sum in \eqref{kin}:

\begin{align*}
\sum_{l=1}^r(2d^2+2)h(Q_l)+10r\log(d^2+2)+(2d^2+7)r\log 2+\frac{r}{d}\left(h(C_1)+h(C_2)\right)+\frac{r}{\delta d^2}\log |\Delta|+\frac{2\delta r}{d^2} c_3(k)\\
\leq 4(2d^2+2)d^2(h+1)+10d^2\log(d^2+2)+(2d^2+7)d^2+2dh+\frac{1}{\delta}\log |\Delta|+2\delta 2^{4\delta} \delta^{6\delta}\sqrt{|\Delta|}(\log^*|\Delta|)^\delta\\
\leq 55d^4(h+1)+2^{4\delta+2} \delta^{6\delta+1}\sqrt{|\Delta|}(\log^*|\Delta|)^\delta.
\end{align*}

We now bound $h_\infty$, after making some appropriate choices.  Choose $f_1, f_2$, and $Q_l=(x_l,y_l,z_l)$, $l=1,\ldots, r$, as in Lemma \ref{arch} \eqref{c2}.  Then
\begin{align*}
-\left((d^2\delta)^2 h(Q_l)+\frac{(d^2\delta)}{2}\log (d^2\delta)\right) &\leq \log\max\{\|x_l\|_{w_l},\|y_l\|_{w_l}, \|z_l\|_{w_l}\}\leq (2d^2\delta+1)h(Q_l)+\log d^2\delta,\\
\log \max_{\substack{v\in M_k\\ v|\infty}} |f_1|_v&\leq (2\delta+1)\delta h(C_1)+\delta \log \delta\leq 3\delta^2(h+1),\\
\log \max_{\substack{v\in M_k\\ v|\infty}} |f_2|_v&\leq (2\delta+1)\delta h(C_2)+\delta \log \delta\leq 3\delta^2(h+1).
\end{align*}

Let $g_l\in \{z_lx-x_lz,z_ly-y_lz,x_ly-y_lx\}, i=1,\ldots, r$.  If $w\in M_L$, $w|\infty$, then
\begin{align*}
\log \left|\prod_{l=1}^r g_l\right|_w&\leq \log 2^r \prod_{l=1}^r |g_l|_w\leq r\log 2+\log\sum_{l=1}^r |g_l|_w\leq r\log 2+\sum_{l=1}^r \log \max\{|x_l|_{w_l}, |y_l|_{w_l}, |z_l|_{w_l}\}\\
&\leq d^2\log 2+\sum_{l=1}^r ((2d^2\delta+1)d^2\delta h(Q_l)+d^2\delta\log d^2\delta)\\
&\leq d^2\log 2+4(2d^2\delta+1)d^4\delta(h+1)+d^4\delta\log d^2\delta.
\end{align*}
Since $w\in M_L$ was arbitrary, we have
\begin{align*}
\log \max_{\substack{w\in M_L\\ w|\infty}} \left|\prod_{l=1}^r g_l\right|_w\leq d^2\log 2+4(2d^2\delta+1)d^4\delta(h+1)+d^4\delta\log d^2\delta.
\end{align*}
It follows easily that
\begin{equation*}
h_\infty\leq 14d^6\delta^2(h+1).
\end{equation*}

Then from the above, we have
\begin{align*}
\log 2(rM)^2&\leq \log 2d^4(8d)^{16}\leq 2^6d,\\
r_{w/v}\log\max\{\|f_1\|_w,\|f_2\|_w\}&\leq \log\max\{|f_1|_w,|f_2|_w\}\leq 4\delta^2(h+1),\\
r_{w/v}\log\max\{\|a_1\|_w,\|a_2\|_w\}&\leq \log\max\{|a_1|_w,|a_2|_w\}\leq (8d)^{15}(h_\infty+8d\log 8d)\\
&\leq 2^{51}d^{21}\delta^2(h+1).
\end{align*}

We also find
\begin{align*}
-Mr_{w/v}\sum_{l=1}^r\log\max\{\|x_l\|_w,\|y_l\|_w,\|z_l\|_w\}&=-M\sum_{l=1}^rr_{w_l/v}\log\max\{\|x_l\|_{w_l},\|y_l\|_{w_l},\|z_l\|_{w_l}\}\\
&\leq (8d)^8d^2\left((d^2\delta)^2 \sum_{l=1}^r h(Q_l)+\frac{d^4\delta}{2}\log (d^2\delta)\right)\\
&\leq 2^{24}d^{10}\left(4d^6\delta^2(h+1)+\frac{d^4\delta}{2}\log (d^2\delta) \right)\\
&\leq 2^{27}d^{16}\delta^2(h+1).
\end{align*}

From the product formula and the fact that $a\in O_L$, we have the inequality
\begin{align*}
-\sum_{\substack{w'\in M_L\\w'|v}}\log \|a\|_{w'}&=\sum_{\substack{w'\in M_L\\w'\nmid v}}\log\|a\|_{w'}\leq \sum_{\substack{w'\in M_L\\w'| \infty}}\max\{\log\|a\|_{w'},0\}\\
&\leq(8d)^{15}(h_\infty+8d\log 8d)\leq 2^{51}d^{21}\delta^2(h+1).
\end{align*}

Since $L/k$ is Galois, there are exactly $r_{w/v}$ places $w'\in M_L$ with $w'|v$.  Therefore, there exists a place $w'\in M_L$ with $w'|v$ and $-\log \|a\|_{w'}\leq \frac{1}{r_{w/v}}2^{51}d^{36}\delta^2(h+1)$.  Choosing now $w=w'$, we have

\begin{equation*}
-r_{w/v}\log \|a\|_{w}\leq 2^{51}d^{21}\delta^2(h+1).
\end{equation*}

Summing all of the inequalities above, we find that, as claimed, the remaining terms in \eqref{kin} are easily bounded by \eqref{kb}.

\end{proof}
\end{proof}

\subsection*{Acknowledgments}

The author would like to thank Mike Bennett for sharing and discussing his preprint \cite{Ben}.

\bibliography{linforms}

\providecommand{\bysame}{\leavevmode\hbox to3em{\hrulefill}\thinspace}
\providecommand{\MR}{\relax\ifhmode\unskip\space\fi MR }
\providecommand{\MRhref}[2]{%
  \href{http://www.ams.org/mathscinet-getitem?mr=#1}{#2}
}
\providecommand{\href}[2]{#2}
\begin{thebibliography}{10}

\bibitem{Aut}
P.~Autissier, \emph{G\'eom\'etries, points entiers et courbes enti\`eres}, Ann.
  Sci. \'Ec. Norm. Sup\'er. (4) \textbf{42} (2009), no.~2, 221--239.

\bibitem{Aut2}
\bysame, \emph{Sur la non-densit\'e des points entiers}, Duke Math. J.
  \textbf{158} (2011), no.~1, 13--27.

\bibitem{Bak}
A.~Baker, \emph{Transcendental number theory}, Cambridge University Press,
  London, 1975.

\bibitem{Ben}
M.~A. Bennett, \emph{Effective {$S$}-unit and norm-form equations in several
  variables}, preprint.

\bibitem{BEG}
A.~B{\'e}rczes, J.-H. Evertse, and K.~Gy{\H{o}}ry, \emph{Effective results for
  linear equations in two unknowns from a multiplicative division group}, Acta
  Arith. \textbf{136} (2009), no.~4, 331--349.

\bibitem{Bilu}
Yu. Bilu, \emph{Effective analysis of integral points on algebraic curves},
  Israel J. Math. \textbf{90} (1995), no.~1-3, 235--252.

\bibitem{Bom}
E.~Bombieri, \emph{On {W}eil's ``th\'eor\`eme de d\'ecomposition''}, Amer. J.
  Math. \textbf{105} (1983), no.~2, 295--308.

\bibitem{BG2}
Y.~Bugeaud and K.~Gy{\H{o}}ry, \emph{Bounds for the solutions of unit
  equations}, Acta Arith. \textbf{74} (1996), no.~1, 67--80.

\bibitem{CLZ}
P.~Corvaja, A.~Levin, and U.~Zannier, \emph{Integral points on threefolds and
  other varieties}, Tohoku Math. J. (2) \textbf{61} (2009), no.~4, 589--601.

\bibitem{CZ2}
P.~Corvaja and U.~Zannier, \emph{On integral points on surfaces}, Ann. of Math.
  \textbf{160} (2004), no.~2, 705--726.

\bibitem{CZ}
\bysame, \emph{On the integral points on certain surfaces}, Int. Math. Res.
  Not. (2006), Art. ID 98623, 20 pages.

\bibitem{Ev}
J.-H. Evertse, \emph{On sums of {$S$}-units and linear recurrences}, Compositio
  Math. \textbf{53} (1984), no.~2, 225--244.

\bibitem{Haj}
L.~Hajdu, \emph{A quantitative version of {D}irichlet's {$S$}-unit theorem in
  algebraic number fields}, Publ. Math. Debrecen \textbf{42} (1993), no.~3-4,
  239--246.

\bibitem{HS}
M.~Hindry and J.~H. Silverman, \emph{Diophantine geometry}, Graduate Texts in
  Mathematics, vol. 201, Springer-Verlag, New York, 2000, An introduction.

\bibitem{Len}
H.~W. Lenstra, Jr., \emph{Algorithms in algebraic number theory}, Bull. Amer.
  Math. Soc. (N.S.) \textbf{26} (1992), no.~2, 211--244.

\bibitem{LevRun}
A.~Levin, \emph{Variations on a theme of {R}unge: effective determination of
  integral points on certain varieties}, J. Th\'eor. Nombres Bordeaux
  \textbf{20} (2008), no.~2, 385--417.

\bibitem{Lev}
\bysame, \emph{Generalizations of {S}iegel's and {P}icard's theorems}, Ann. of
  Math. (2) \textbf{170} (2009), no.~2, 609--655.

\bibitem{Mas}
D.~W. Masser, \emph{Linear relations on algebraic groups}, New advances in
  transcendence theory ({D}urham, 1986), Cambridge Univ. Press, Cambridge,
  1988, pp.~248--262.

\bibitem{MW}
D.~W. Masser and G.~W{\"u}stholz, \emph{Fields of large transcendence degree
  generated by values of elliptic functions}, Invent. Math. \textbf{72} (1983),
  no.~3, 407--464.

\bibitem{MT}
D.~Z. Mo and R.~Tijdeman, \emph{Exponential {D}iophantine equations with four
  terms}, Indag. Math. (N.S.) \textbf{3} (1992), no.~1, 47--57.

\bibitem{Phi}
P.~Philippon, \emph{Sur des hauteurs alternatives. {III}}, J. Math. Pures Appl.
  (9) \textbf{74} (1995), no.~4, 345--365.

\bibitem{Run}
C.~Runge, \emph{{\"U}ber ganzzahlige {L}\"osungen von {G}leichungen zwischen
  zwei {V}er\"anderlichen}, J. Reine Angew. Math. \textbf{100} (1887),
  425--435.

\bibitem{Sch}
W.~M. Schmidt, \emph{Construction and estimation of bases in function fields},
  J. Number Theory \textbf{39} (1991), no.~2, 181--224.

\bibitem{Sil2}
J.~H. Silverman, \emph{Lower bounds for height functions}, Duke Math. J.
  \textbf{51} (1984), no.~2, 395--403.

\bibitem{Sil}
\bysame, \emph{Arithmetic distance functions and height functions in
  {D}iophantine geometry}, Math. Ann. \textbf{279} (1987), no.~2, 193--216.

\bibitem{Skin}
C.~M. Skinner, \emph{On the {D}iophantine equation {$ap^x+bq^v=c+dp^zq^w$}}, J.
  Number Theory \textbf{35} (1990), no.~2, 194--207.

\bibitem{vdP2}
A.~J. van~der Poorten and H.~P. Schlickewei, \emph{The growth condition for
  recurrence sequences}, Macquarie Univ. Math. Rep. 82-0041 (1982).

\bibitem{V}
P.~Vojta, \emph{Integral points on varieties}, Dissertation, Harvard
  University, 1983.

\bibitem{Voj3}
\bysame, \emph{Diophantine approximations and value distribution theory},
  Lecture Notes in Mathematics, vol. 1239, Springer-Verlag, Berlin, 1987.

\end{thebibliography}
\end{document}